\theoremstyle{definition}
\newtheorem{theorem}{Theorem}
\newtheorem{lemma}[theorem]{Lemma}
\newtheorem{corollary}[theorem]{Corollary}
\DeclareMathOperator{\prox}{\text{prox}}
\renewcommand{\v}{\textup{\textsf{v}}}
\newcommand{\contaminated}{S}
\newcommand{\graphminus}{-}
\newcommand{\hindex}{H}
\title{The one-visibility Localization game}
\author[A.\ Bonato]{Anthony Bonato}
\author[T.G.\ Marbach]{Trent G.\ Marbach}
\author[M.\ Molnar]{Michael Molnar}
\author[JD Nir]{JD Nir}
\address[A1,A2,A3,A4]{Toronto Metropolitan University, Toronto, Canada}
\email[A1]{(A1) abonato@torontomu.ca}
\email[A2]{(A2) trent.marbach@torontomu.ca}
\email[A3]{(A3) michael.molnar@torontomu.ca}
\email[A4]{(A4) jd.nir@torontomu.ca}
\begin{document}

\keywords{localization number, limited visibility, pursuit-evasion games, isoperimetric inequalities, graphs}
\subjclass{05C57,05C12}

\maketitle
\begin{abstract}
We introduce a variant of the Localization game in which the cops only have visibility one, along with the corresponding optimization parameter, the one-visibility localization number $\zeta_1$. By developing lower bounds using isoperimetric inequalities, we give upper and lower bounds for $\zeta_1$ on $k$-ary trees with $k\ge 2$ that differ by a multiplicative constant, showing that the parameter is unbounded on $k$-ary trees. We provide a $O(\sqrt{n})$ bound for $K_h$-minor free graphs of order $n$, and we show Cartesian grids meet this bound by determining their one-visibility localization number up to four values. We present upper bounds on $\zeta_1$ using pathwidth and the domination number and give upper bounds on trees via their depth and order. We conclude with open problems.
\end{abstract}

\section{Introduction}

Pursuit-evasion games, such as the Localization game and the Cops and Robber game, are combinatorial models for detecting or neutralizing
an adversary’s activity on a graph. In such models, pursuers attempt to capture an evader loose on the vertices of a graph. How the players move and the rules of capture depend on which variant is studied. Such games are motivated by foundational topics in computer science, discrete mathematics, and artificial intelligence, such as robotics and network security. For surveys of pursuit-evasion games, see the books \cite{bonato1,bonato2}; see Chapter~5 of \cite{bonato2} for more on the Localization game.

Among the many variants of the game of Cops and Robbers, one theme is to limit the visibility of the robber. For a nonnegative integer $k,$ in $k$-visibility Cops and Robbers, the robber is visible to the cops only when a cop is distance at most $k$. The case when $k=0$ has been studied~\cite{der1,der2,Tosik1985}, as has the case when $k=1$~\cite{bty1, bty2, bty3}, and a recent paper covers the cases $k\geq1$~\cite{kvis_CR_Clarke2020}.

The Localization game was first introduced for one cop by Seager~\cite{car,seager1}. The game in the present form was first considered in the paper \cite{car}, and subsequently studied in several papers such as~\cite{BBHMP,BHM,BHM1,BK,Bosek2018,nisse2,BDELM}.
We consider a novel analogue of one-visibility Cops and Robbers in the setting of the Localization game. In the \emph{one-visibility Localization game}, there are two players playing on a graph, with one player controlling a set of $k$ \emph{cops}, where $k$ is a positive integer, and the second controlling a single \emph{robber}. The game is played over a sequence of discrete time-steps; a \emph{round} of the game is a move by the cops and the subsequent move by the robber. The robber occupies a vertex of the graph, and when the robber is ready to move during a round, they may move to a neighboring vertex or remain on their current vertex. A move for the cops is a placement of cops on a set of vertices (note that the cops are not limited to moving to neighboring vertices). The players move on alternate time-steps, with the robber going first. In each round, the cops $C_1,C_2,\ldots ,C_k$ occupy a set of vertices $u_1, u_2, \dots , u_k$ and each cop sends out a \emph{cop probe} $d_i$, where $1\le i \le k$. If a cop $C_i$ is on the vertex of the robber, then $d_i=0$. If the cop $C_i$ is adjacent to the robber, then $d_i=1$. In all other cases, the cop probe returns no information, and we set $d_i=\ast$. Hence, in each round, the cops determine a \emph{distance vector} $D=(d_1, d_2, \dots ,d_k)$ of cop probes. Relative to the cops' position, there may be more than one vertex $x$ with the same distance vector. We refer to such a vertex $x$ as a \emph{candidate of} $D$ or simply a \emph{candidate}.  The cops win if they have a strategy to determine, after a finite number of rounds, a unique candidate, at which time we say that the cops {\em capture} the robber. We assume the robber is \emph{omniscient}, in the sense that they know the entire strategy for the cops. If the robber evades capture, then the robber wins. For a graph $G$, define the \emph{one-visibility localization number} of $G$, written $\zeta_1(G)$, to be the least positive integer $k$ for which $k$ cops have a winning strategy in the one-visibility Localization game. The standard Localization game is played in the same way, except that  each cops' probe returns $d_i$ as  the distance between this cop and the robber. The localization number is the minimum number of cops required for this game and is denoted $\zeta(G)$. 

For a graph $G$ of order $n$, $\zeta(G) \le \zeta_1(G)$, as a winning cop strategy in one-visibility localization will also be winning in the Localization game. Further, $\zeta_1(G) \le n - 1$. If $G$ has diameter at most 2, then a probe of $\ast$ by a one-visibility cop can only represent a distance of 2, and so $\zeta(G) = \zeta_1(G)$. We may define $\zeta_j$ for all integers $j\ge 0$ in an analogous fashion, although we will only consider the case $j=1$ in this paper. A recent work~\cite{BL22} introduced the so-called \emph{zero-visibility search game}, which is equivalent to $\zeta_j$ in the case when $j=0.$

We illustrate briefly how the parameters $\zeta(G)$ and $\zeta_1(G)$ may differ. \emph{Spiders} are trees with exactly one vertex of degree at least 3.  This vertex is referred to as the \emph{head}, and the paths from the head to the leaves, not including the head, are referred to as \emph{arms}. Let $G$ be the spider consisting of head vertex $r$ and three arms of length three. It is straightforward to see that $\zeta(G) = 1;$ however, $\zeta_1(G) = 2$. To see that $\zeta_1(G) >1$, suppose one cop plays. The robber chooses one of the neighbors of $r$ on an arm the cop will not probe first and passes until the cop is about to probe on that arm. When the cop moves to the robber's arm, the robber moves to $r$. Anticipating the next cop probe, they move to a neighbor of $r$ on an arm that will not be probed in the next round and the process repeats. To see that $\zeta_1(G) \le 2$, have one cop probe $r$ in every round, and the other cop scans each of the arms until the robber is captured. 

The paper is organized as follows. We begin in Section~\ref{spr} by considering a relaxation of the one-visibility Localization game to the one-proximity game, where the robber is captured if they occupy a neighbor of the cop. We consider bounds on $\zeta_1$ in terms of the corresponding one-proximity number $\prox_1(G).$ In Section~\ref{sec:gen_bounds}, we give several techniques for bounding $\zeta_1(G)$ and $\prox_1(G)$ for general graphs $G$. Upper bounds are given using pathwidth and the domination number, and are found for certain minor-free graphs. Lower bounds are derived by using isoperimetric inequalities and a new graph parameter we call the $h$-index. In Section~\ref{sec:trees}, we show that $\zeta_1$ and $\prox_1$ differ on trees by at most 1. We derive upper bounds on trees via their depth and order and give lower bounds on $k$-ary trees with $k\ge 2$ using their isoperimetric peaks. One consequence of these results is that the one-visibility number is unbounded on the family of $k$-ary trees; this contrasts significantly from the Localization game, where trees have localization number at most 2. We consider Cartesian grid graphs in Section~\ref{sec:grids} and derive bounds there that differ by four values. We conclude with further directions and open problems.

All graphs we consider are finite, undirected, reflexive, and do not contain multiple edges. We only consider connected graphs, unless otherwise stated. 
The set of vertices that share an edge with $x$ is denoted $N(x)$, and we refer to vertices in $N(x)$ as \emph{neighbors} of $x$. 
Although our graphs are reflexive, we insist that $x\notin N(x).$ 
We define $N[x]=N(x) \cup \{x\}$. 
For a set $S$ of vertices, $N[S]=\bigcup_{u \in S} N[u]$. 
For a graph $G,$ let $\Delta(G)$ b  e the maximum degree of a vertex in $G$. For further background on graph theory, see \cite{west}. 

\section{The one-proximity game}\label{spr}

Before we present results on the one-visibility Localization game, we give a simpler version that will prove useful for bounding $\zeta_1(G)$. In the \emph{one-proximity game}, play is defined as in the one-visibility Localization game, except that the cops win immediately if any probe returns a distance other than $\ast$. We call the corresponding graph parameter the \emph{one-proximity number}, written as $\prox_1(G)$.  This game corresponds to the probes returning perfect information about the neighborhood of a vertex, rather than merely whether or not the robber is adjacent to the probed vertex. Note that $\prox_1$ is the analogue of the \emph{one-visibility seeing cop-number} $c_1'$, where the robber is captured if they are in the neighborhood of a cop; see \cite{kvis_CR_Clarke2020}.

Observe that $\prox_1(G) \le \zeta_1(G),$ as the one-proximity game cops can use the same strategy as the Localization game cops until the final round when, having located the robber, the one-proximity game cop probes the robber's last known location and must be within distance one from the robber.  As noted for $k$-visibility Cops and Robber in \cite{kvis_CR_Clarke2020}, seeing the robber for the first time could be much more resource intensive than the subsequent capture. The extra expense cannot be too large, however. 

\begin{theorem}\label{thm:prox_zeta_Delta} For every graph $G$, we have 
$$\zeta_1(G) \leq \Delta(G) \prox_1(G).$$
\end{theorem}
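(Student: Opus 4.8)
The plan is to have $\Delta(G)\cdot\prox_1(G)$ cops win the one-visibility Localization game by running a winning strategy of $\prox_1(G)$ cops for the one-proximity game, but with each proximity cop replaced by a \emph{squad} of $\Delta(G)$ localization cops. Write $p=\prox_1(G)$, fix a winning strategy $\sigma$ for $p$ cops in the one-proximity game, and split the $\Delta(G)\cdot p$ localization cops into squads $Q_1,\dots,Q_p$ of size $\Delta(G)$, with $Q_i$ charged with imitating the $i$-th proximity cop.

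The first step is the gadget by which a squad $Q_i$ imitates a proximity cop stationed at a vertex $u$. If $\deg(u)<\Delta(G)$, deploy $Q_i$ on all of $N[u]$ (parking any surplus cops on $u$); if $\deg(u)=\Delta(G)$, choose an arbitrary neighbour $v^\ast$ of $u$ and deploy $Q_i$ on $N[u]\setminus\{v^\ast\}$. I claim the probes of $Q_i$ reveal exactly whether the robber lies in $N[u]$, and, if so, pin down its vertex. Indeed, if the robber occupies any vertex of $N[u]$ other than the possibly-omitted $v^\ast$, the cop of $Q_i$ standing there probes $0$, so the candidate set collapses to that vertex. If the robber sits on $v^\ast$ (which can occur only when $\deg(u)=\Delta(G)$), then the cop at $u$ probes $1$ while no cop of $Q_i$ probes $0$; since $v^\ast$ is the unique neighbour of $u$ not occupied by $Q_i$, intersecting the candidate set with ``adjacent to $u$ but at no occupied neighbour of $u$'' leaves exactly $v^\ast$. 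Finally, if the robber avoids $N[u]$, then the cop at $u$ probes $\ast$ and no cop of $Q_i$ probes $0$, matching a $\ast$ for the imitated proximity cop. (A squad may also report $1$'s produced by a robber at distance exactly $2$ from $u$; such information only shrinks the candidate set and is harmless.)

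With the gadget in hand, the simulation is immediate: each round, move every squad $Q_i$ into the above configuration around the vertex that $\sigma$ prescribes for the $i$-th proximity cop; declare the imitated probe at $u_i$ to be non-$\ast$ precisely when some cop of $Q_i$ probes $0$ or the cop at $u_i$ probes $1$; and feed these proximity-style answers to $\sigma$. As long as the localization cops have not already won, every imitated answer is $\ast$, so the robber's genuine vertex lies in the candidate set that $\sigma$ is tracking. Were this to persist forever, the robber would avoid $N[u_1],\dots,N[u_p]$ in all rounds, contradicting that $\sigma$ wins the one-proximity game; hence at some round the robber is forced into some $N[u_j]$. At that round the gadget guarantees that the probes of $Q_j$, combined with those of the other squads (which only add further constraints consistent with the robber's position), leave a single candidate, so the localization cops win.

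The step I expect to be the main obstacle is the $\deg(u)=\Delta(G)$ case of the gadget: a squad of only $\Delta(G)$ cops cannot occupy all $\Delta(G)+1$ vertices of $N[u]$, so one must keep a cop on $u$ itself and then recover the robber's vertex in the sole remaining scenario, the robber on the uncovered neighbour $v^\ast$, from the pattern ``the cop at $u$ probes $1$ yet no cop probes $0$''. A second, lighter point to check is that the additional distance-$2$ information the squads reveal cannot throw $\sigma$ off: $\sigma$ is shown only the coarse proximity answers, and the true candidate set is always a subset of the one $\sigma$ maintains, so presenting $\sigma$ with all-$\ast$ answers remains consistent up to the very round in which detection, hence a win, is forced.
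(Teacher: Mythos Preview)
Your proof is correct and follows essentially the same approach as the paper: replace each proximity cop at $u$ by a squad occupying $u$ together with all but (at most) one neighbour of $u$, so that a non-$\ast$ response at $u$ in the proximity game is upgraded to an exact identification of the robber's vertex. Your write-up is somewhat more careful than the paper's in separating the cases $\deg(u)<\Delta(G)$ and $\deg(u)=\Delta(G)$ and in noting explicitly that extraneous distance-$2$ information can only help, but the underlying argument is the same.
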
 
\begin{proof}
    Suppose that when $\prox_1(G)$ cops play the one-proximity game, and that if these cops move on the vertices $V_t$ in round $t$, then the cops win. 
    We play with $\Delta(G) \prox_1(G)$ cops in the one-visibility Localization game. In round $t$, for each $u\in V_t$, a cop is placed on $u$ and on $\Delta(G)-1$ of the at most $\Delta(G)$ vertices in $N(u)$, chosen arbitrarily. 
    We know that in some round $t'$, there is a $v\in V_{t'}$ such that the robber is in $N[v]$. (This was the requirement for the cops to win independent of the robber strategy in the one-proximity game.)
    Before round $t'$, every cop in the one-proximity game received a distance of $\ast$, so the cops in the one-visibility Localization game are playing with no less information. In round $t'$, we have either a cop on the same vertex as the robber, or the robber is on the unique vertex in $N(v)$ that does not contain a cop. In the latter case, the robber's exact location is now known, so they are captured. 
\end{proof}

Theorem \ref{thm:prox_zeta_Delta} is tight on the complete graphs. 
We can say more if $\prox_1(G)$ is large compared to the maximum degree of $G$. We do not claim the bound on $\prox_1$ in the hypothesis of the following theorem is optimal. 
 
\begin{theorem}\label{tpb}
If $G$ is a graph and $\prox_1(G) \ge \Delta(G)^2$, then $\zeta_1(G) = \prox_1(G)$.
\end{theorem}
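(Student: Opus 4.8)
The plan is to prove the nontrivial inequality $\zeta_1(G)\le\prox_1(G)$; together with the bound $\prox_1(G)\le\zeta_1(G)$ recorded just before Theorem~\ref{thm:prox_zeta_Delta}, this yields equality. Write $\Delta=\Delta(G)$ and $p=\prox_1(G)$, and assume $p\ge\Delta^2$. I will exhibit a winning strategy for $p$ cops in the one-visibility Localization game that runs in two phases.

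In the first phase, the cops shadow a winning strategy for $p$ cops in the one-proximity game, making the identical moves round by round. This is legitimate so long as every probe so far has returned $\ast$, since then the one-visibility cops possess exactly the information the one-proximity cops would have. Because the one-proximity strategy defeats every robber, some probe must eventually return a value other than $\ast$; let $t'$ be the first such round. If a probe returns $0$ in round $t'$, the robber's vertex is the unique candidate and the game is over. Otherwise a cop at some vertex $v$ returns $1$, so the robber occupied a vertex of $N(v)$ during round $t'$; hence the candidate set at the end of round $t'$ is contained in $N(v)$, and the set $R$ of vertices the robber could occupy at the start of round $t'+1$ is contained in $N[N(v)]$. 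Note that $R$ is known to the cops, being determined by the round-$t'$ probe.

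The second phase finishes the game in one more round. The counting fact that drives the argument is $|N[N(v)]|\le\Delta^2+1$: this set consists of $v$, the at most $\Delta$ vertices of $N(v)$, and the at most $\Delta(\Delta-1)$ vertices at distance exactly two from $v$. Hence $|R|\le\Delta^2+1\le p+1$, so in round $t'+1$ the cops can occupy every vertex of $R$ with at most one exception, say $w$ (surplus cops, if any, are placed arbitrarily). If the robber sits on a vertex carrying a cop, that cop probes $0$ and the robber is located. If instead the robber sits on $w$, then for every other vertex $x\in R$ there is a cop on $x$; that cop cannot probe $0$ since the robber is at $w\ne x$, and whether it probes $1$ or $\ast$ the vertex $x$ is inconsistent with the resulting distance vector because $x\in N[x]$. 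As every candidate must lie in $R$, the vertex $w$ is the unique candidate and the robber is captured.

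The points requiring care are the validity of the hand-off at round $t'$ — one must observe that the first round in which a one-visibility probe is non-$\ast$ is exactly the round in which the shadowed one-proximity cops would declare victory, so the imitation is legitimate up to and including that round — and the uniqueness claim in the second phase, which combines the round-$t'+1$ distance vector with the movement constraint $R\subseteq N[N(v)]$ to exclude any candidate outside the covered set. The inequality $|N[N(v)]|\le\Delta^2+1$ is what forces the hypothesis $p\ge\Delta^2$; as the statement indicates, it is plausible that a weaker hypothesis suffices.
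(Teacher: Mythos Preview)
Your proof is correct and follows essentially the same approach as the paper: shadow a one-proximity strategy until a probe returns $1$ at some vertex $v$, then use the bound $|N[N(v)]|\le\Delta^2+1$ to cover all but one possible robber location in the next round. The paper makes the specific choice $w=v$ (placing cops on every vertex at distance $1$ or $2$ from $v$), whereas you leave $w$ arbitrary, but this is only a cosmetic difference.
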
 
\begin{proof}
The $\prox_1(G)$-many cops play the one-visibility Localization game, following a winning strategy from the one-proximity game. At some point, since the strategy is winning, at least one probe returns a distance of $1$ on vertex $v$. 
On the robber's move, the robber moves to a vertex of distance $0$, $1$, or $2$ from $v$. 

During the cops' next move, a cop is placed on each vertex of distance $1$ or $2$ from $v$, which requires at most $\Delta(G)^2$ cops. 
Either a cop on some vertex $u$ probes $0$ and the robber is found on $u$, or no cop probes $0$ and the robber is found on $v$. 
\end{proof}

One benefit of considering the one-proximity game instead of the one-visibility Localization game is the success or failure of the cops strategy is independent of the robber's strategy.  Let $G$ be a graph, and for $t \ge 1$, let $V_t$ denote the set of vertices probed by the cops in round $t$. 
Define $\contaminated_t=\contaminated_t(G,\{V_1, \ldots, V_t\})$ to be the set of vertices on which the robber may reside immediately after the cops' $t$th move without having been captured. 

Consider the following three properties: 
\begin{enumerate}
\item the robber can start on any vertex; 
\item if the robber was on some vertex $v$ before the robber's $(t+1)$th move, then they can move to any vertex in $N[v]$ on their $(t+1)$th move; and
\item the robber is captured on the cops' $(t+1)$th move if they are in a vertex of $\bigcup_{v \in V_{t+1}} N[v].$
\end{enumerate}

The following theorem translates these properties to statements about $\contaminated_t$. For a set of vertices $S$, let $\delta(S)$ be the set of vertices not in $S$ that are adjacent to some vertex in $S$. To avoid conflicting notation, we do not use $\delta$ to denote the minimum degree of a graph.

\begin{lemma} \label{lem:oneVisToSets}
When playing the one-proximity game on a graph $G$ where the cops play on $V_t$ in round $t$, the robber can be on a vertex $u$ if and only if $u\in S_t$, where
\begin{enumerate}
    \item $\contaminated_1=V(G)$; and 
    \item $\contaminated_{t+1} = (\contaminated_t \cup \delta (\contaminated_t)) \setminus (\bigcup_{v \in V_{t+1}} N[v])$.
\end{enumerate}
\end{lemma}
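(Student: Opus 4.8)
The plan is to prove the lemma by induction on $t$, establishing the two claims simultaneously: that $\contaminated_t$ is exactly the set of uncaptured vertices the robber could occupy after the cops' $t$th move, and that it satisfies the stated recurrence. The base case $t=1$ is immediate from property (1): the robber may start anywhere, and since we are describing the situation immediately after the cops' first move, any vertex not in $\bigcup_{v\in V_1}N[v]$ remains; but in fact the lemma as stated sets $\contaminated_1 = V(G)$, so I would first reconcile this with the indexing—presumably $\contaminated_1$ records positions before the cops' first probe, or equivalently the recurrence absorbs the first probe's effect into $\contaminated_2$. I would state the convention explicitly and then verify the base case matches property (1).

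For the inductive step, assume $\contaminated_t$ correctly describes all uncaptured vertices the robber may occupy after round $t$. I would argue both inclusions. For ``$\subseteq$'': if the robber can be on $u$ after the cops' $(t+1)$th move, then on the robber's $(t+1)$th move they moved from some vertex $w$ they occupied after round $t$; by the inductive hypothesis $w\in\contaminated_t$, and by property (2) $u\in N[w]\subseteq \contaminated_t\cup\delta(\contaminated_t)$; since the robber was not captured on the cops' $(t+1)$th move, property (3) forces $u\notin\bigcup_{v\in V_{t+1}}N[v]$, so $u\in\contaminated_{t+1}$. For ``$\supseteq$'': given $u\in\contaminated_{t+1}$, we have $u\in N[w]$ for some $w\in\contaminated_t$ and $u\notin\bigcup_{v\in V_{t+1}}N[v]$; by the inductive hypothesis there is a consistent robber history reaching $w$ after round $t$, and since $u\in N[w]$ the robber may legally move to $u$ (property (2)), remaining uncaptured by property (3). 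Here is where I would use the crucial feature emphasized in the paragraph preceding the lemma: in the one-proximity game the set of safe positions depends only on $\{V_1,\dots,V_{t+1}\}$ and not on the robber's actual past moves, so the ``consistent history'' in the $\supseteq$ direction genuinely exists regardless of what the cops observed.

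The main subtlety—and the step I'd flag as needing care rather than the one that is hard—is making sure the $\supseteq$ direction is not circular: one must check that the robber's probes along the reconstructed history are all $\ast$ (otherwise the game would already be over), which is exactly guaranteed because every vertex in $\contaminated_s$ for $s\le t$ avoids $\bigcup_{v\in V_s}N[v]$, so no probe ever returned a non-$\ast$ value. I would spell this out: an uncaptured play in the one-proximity game is precisely a walk $w_1,w_2,\dots$ with $w_{s+1}\in N[w_s]$ and $w_s\notin\bigcup_{v\in V_s}N[v]$ for every $s$, and the sets $\contaminated_s$ defined by the recurrence are exactly the reachable endpoints of such walks. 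With that reformulation the induction is routine. I expect no genuine obstacle; the only thing to be careful about is the off-by-one in the indexing between ``round $t$'' and ``after the cops' $t$th move,'' which I would pin down at the outset.
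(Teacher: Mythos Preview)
Your approach is correct and essentially matches the paper's: both verify the recurrence directly from properties (1)--(3), tracking how the set of possible robber positions first expands under the robber's move to $\contaminated_t\cup\delta(\contaminated_t)$ and then shrinks under the cops' probe by removing $\bigcup_{v\in V_{t+1}}N[v]$. The paper's proof is terser and does not explicitly separate the two inclusions or address the indexing of $\contaminated_1$ that you flag, but the substance is identical.
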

\begin{proof}
The robber may start on any vertex, so $\contaminated_1=V(G)$. Immediately before the robber's $(t+1)$th move, the robber may be on a vertex $v$ if and only if $v\in \contaminated_t$. The robber uses their $(t+1)$th move to occupy some vertex in $N[v]$. Thus, the robber can be on vertex $v$ if and only if $v \in (\contaminated_t \cup \delta (\contaminated_t))$ after the robber's $(t+1)$th move.

The robber will be captured on the cops' $(t+1)$th move if and only if it is in a vertex of $\bigcup_{v \in V_{t+1}} N[v]$. Therefore, the robber remains uncaptured after the cop's $(t+1)$th move if and only if it is on a vertex in $ (\contaminated_t \cup \delta( \contaminated_t)) \setminus (\bigcup_{v \in V_{t+1}} N[v])$. This completes the proof. 
\end{proof}
  
There are a variety of different terminologies for the sets $\contaminated_t$. These can be called the \emph{robber territory}, 
or the set of \emph{contaminated} vertices. We use the term contaminated, denoting these vertices as red in the figures. 
The vertices not in $\contaminated_t$ are usually called either \emph{clean} or \emph{cleared}. 
We use the term cleared and denote these vertices as white in any figures. 
A set of vertices is contaminated (respectively, cleared) if all of its contained vertices are contaminated (respectively, cleared). 
We say a cleared set $S$ is \emph{fully cleared} when the robber can never return to recontaminate the vertices of $S$ under the given cop strategy. 

Lemma~\ref{lem:oneVisToSets} is the one-visibility Localization game equivalent of Proposition~10 of \cite{BL22}, which is a result about the zero-visibility Localization game.
As a result, we can treat play in the one-proximity game as a single-player game where the cops clear vertices on their move and the contamination spreads between the cops' moves. This will often be easier to analyze because the robber strategy is no longer necessary.

\section{Bounds on $\zeta_1$} \label{sec:gen_bounds}

In the present section, we focus on several bounds for $\zeta_1,$ including upper bounds using pathwidth, the domination number, and one using properties of certain minor-free graphs. We finish by giving lower bounds using isoperimetric inequalities.

\subsection{Upper bounds}

We begin with an upper bound using pathwidth. In \cite{Bosek2018}, the localization number of a graph is bounded above by the graph's pathwidth. An analogous result holds for the one-localization number. 

Given a graph $G$, a \emph{path-decomposition} of $G$ is a pair $(X,P)$, where the set $X = \{B_1, B_2, \dots, B_n\}$ consists of subsets of $V(G)$ called $bags$, and $P$ is a path whose vertices are the bags $B_i$, satisfying the following properties: 
\begin{enumerate}
    \item $V(G) = \bigcup_{i=1}^{n}B_i$; 
    \item for every edge $(u,v)\in E(G)$, there exists a bag that contains both $u$ and $v$; and
    \item for all $1 \le i \le k \le j \le n$, $B_i \cap B_j \subseteq B_k$.
\end{enumerate}
The \emph{width} of the path-decomposition is the cardinality of its largest bag minus 1, and the \emph{pathwidth} of the graph $G$, denoted $\mathrm{pw}(G)$, is the minimum width among all possible path-decompositions of $G$. While the proof of the following theorem is analogous to the proof bounding the localization number by pathwidth given in \cite{Bosek2018}, we include it for completeness.

\begin{theorem}\label{pw}
For any graph $G$, $\zeta_1(G) \le \mathrm{pw}(G)$.
\end{theorem}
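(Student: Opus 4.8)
The plan is to reduce the one-proximity game (whose winning condition is independent of the robber's strategy by Lemma~\ref{lem:oneVisToSets}) to a sweeping strategy along a path-decomposition of width $\mathrm{pw}(G)$; this actually proves the stronger statement $\prox_1(G) \le \mathrm{pw}(G)+1$, but since the final bag of a "clean" sweep locates the robber in a single vertex, we recover $\zeta_1(G) \le \mathrm{pw}(G)$. Fix an optimal path-decomposition $(X,P)$ with bags $B_1, B_2, \dots, B_n$ ordered along $P$, so that $|B_i| \le \mathrm{pw}(G)+1$ for all $i$ and, by property~(3), the bags containing any fixed vertex form a contiguous interval. The cops will use the bags as "walls": in round $t$ they probe (a subset of) $B_t$, and the key invariant I want to maintain is that after the cops' move in round $t$, every contaminated vertex lies in $\bigcup_{i > t} B_i$ — equivalently, the set $B_1 \cup \dots \cup B_t$ is cleared, and moreover no vertex of $B_1 \cup \dots \cup B_{t-1}$ can become recontaminated.

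First I would record the separator property: for each $t$, the bag $B_t$ separates $\bigcup_{i<t}B_i$ from $\bigcup_{i>t}B_i$ in $G$, because any edge joining a vertex whose interval lies entirely before $t$ to a vertex whose interval lies entirely after $t$ would have to sit in a common bag, contradicting property~(2) and the interval property. Hence if $B_t$ is cleared at the end of round $t$ and $B_1\cup\dots\cup B_{t-1}$ is already cleared, the contamination on the far side cannot leak across $B_t$ on the robber's next move. Second, I would describe the strategy and verify the invariant by induction on $t$. For the base case, with $\prox_1(G)$ replaced by $\mathrm{pw}(G)+1$ cops we simply probe all of $B_1$; this does not yet establish much since $\contaminated_1 = V(G)$, but after the robber's move and then probing $B_2$, the standard path-sweeping argument kicks in. For the inductive step, suppose after round $t$ all contamination lies in $\bigcup_{i>t}B_i$ and $B_1\cup\dots\cup B_{t-1}$ cannot be recontaminated. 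On the robber's move, contamination can only spread to $\delta$ of this set, which by the separator property adds nothing inside $B_1\cup\dots\cup B_t$ except possibly back into $B_t$ itself; then the cops probe $B_{t+1}$. Since $B_{t+1}$ separates the cleared side from the rest and we clear it, and since $B_t \setminus B_{t+1}$ was cleared at round $t$ and lies "before" $B_{t+1}$ so is protected by the $B_{t+1}$ wall, the invariant is restored with $t+1$ in place of $t$. After round $n$ the contamination set is empty, so in particular in the last nonempty round it had shrunk to at most one vertex, which the one-visibility cops identify (here is where a probe returning $1$ rather than the proximity cops' immediate win is harmless — with $\mathrm{pw}(G)$ cops one bag suffices to pin down the last candidate).

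The one genuine subtlety — and what I expect to be the main obstacle — is shaving the $+1$: a naive sweep uses $\mathrm{pw}(G)+1$ cops (the size of a largest bag), whereas the claim is $\mathrm{pw}(G)$. The resolution, exactly as in \cite{Bosek2018}, is that one never needs to occupy an entire bag at once: when advancing the wall from $B_t$ to $B_{t+1}$, the vertices in $B_t \cap B_{t+1}$ are already cleared and, being in $B_{t+1}$, cannot be recontaminated while we hold the rest of $B_{t+1}$, so it suffices to probe $B_{t+1} \setminus \{w\}$ for one cleverly chosen vertex $w \in B_t \cap B_{t+1}$ (using a normalized "nice" path-decomposition where consecutive bags differ by one vertex, so that $B_{t+1}\setminus B_t$ is a single new vertex and $B_t \cap B_{t+1}$ is nonempty whenever $B_{t+1}$ is not the first bag). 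Thus at every step at most $\mathrm{pw}(G)$ cops are in play. I would carry this out by first passing to a nice path-decomposition, then defining the wall as $B_t$ minus the one "oldest" vertex about to be forgotten, and checking that this trimmed wall still separates the cleared region from the contaminated region because the forgotten vertex has no neighbors outside $B_1 \cup \dots \cup B_t$. The remaining steps are the routine induction above.
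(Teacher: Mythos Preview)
Your broad approach---sweep the bags of an optimal path-decomposition and save one cop via a ``$-1$ trick''---is exactly the paper's approach (which in turn follows \cite{Bosek2018}). But the crux of the argument is the $-1$ trick, and your description of it has a genuine gap.

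First, routing through the one-proximity game is a red herring. You write that the sweep ``actually proves the stronger statement $\prox_1(G) \le \mathrm{pw}(G)+1$,'' but that inequality is \emph{weaker} than the target, since $\prox_1(G) \le \zeta_1(G)$; a bound on $\prox_1$ never transfers upward to $\zeta_1$. The hand-wave ``the final bag of a clean sweep locates the robber in a single vertex'' does not bridge this: in the proximity game the cops win as soon as the robber is within distance~1, which in the one-visibility game only produces a probe of~1, not a unique candidate.

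Second, and more seriously, your two descriptions of the omitted vertex are inconsistent and neither matches the paper's choice. At one point you omit some $w \in B_t \cap B_{t+1}$; later you omit ``the oldest vertex about to be forgotten,'' i.e.\ the vertex in $B_t \setminus B_{t+1}$. Neither works on its own. If you omit the forgotten vertex $u$ and the robber sits on $u$, a neighbouring cop may return~1, but that cop can have neighbours in later bags, so the robber is not uniquely identified. If you omit an arbitrary $w \in B_t \cap B_{t+1}$, nothing guarantees any probed vertex is adjacent to $w$ at all. The paper's trick is precisely to tie these together: one fixes $u_i \in B_i \setminus B_{i+1}$ (whose interval ends at $i$, so every neighbour of $u_i$ lies in $\bigcup_{j \le i} B_j$) and omits a \emph{neighbour} $v_i$ of $u_i$ inside $B_i$. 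Then a probe of~1 at $u_i$ forces the robber to $v_i$, since every other neighbour of $u_i$ is either probed (distance~0 would have fired) or lies in the already-cleared region $\bigcup_{j<i} B_j \setminus B_i$. That coupling of ``forgotten vertex'' and ``omitted neighbour'' is the missing idea in your write-up.
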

\begin{proof}
Assume $G$ has at least two vertices and let $P$ be a path-decomposition of $G$. Without loss of generality, linearly order the bags $B_1, B_2, \dots, B_k$ from left to right. For all $1 \le i \le k$ we assume $B_i \setminus B_{i+1}$ is nonempty; otherwise, $B_i$ can be eliminated from the path-decomposition. Furthermore, for every $u \in B_i \setminus B_{i+1}$, we assume $u$ has a neighbor in $B_i$. If this were not the case, then we remove $u$ from bag $B_i$ without changing the path-decomposition.

For each $1 \le i < k$, let $u_i$ be a fixed vertex in $B_i \setminus B_{i+1}$ and let $v_i$ be a neighbor of $u_i$ in $B_i$. Also let $u_k$ be a vertex in $B_k \setminus B_{k-1}$ and $v_k$ be a neighbor of $u_k$ in $B_k$. Sequentially, for $i = 1, 2, \dots, k$, the cops probe each vertex of $B_i \setminus v_i$. 

Starting with $B_1$, which is a leaf of $P$, cops probe $B_1 \setminus v_1$. Suppose the robber is in $B_1$. If they are in $B_1 \setminus v_1$, then they are captured since a cop will probe 0. If the robber is on $v_1$, the cop at $u_1$ probes 1. Since $u_1$ must have a neighbor in $B_1$ and no cop has probed 0, the robber is captured at $v_1$. 
In this way, we can ensure the robber is not in $B_1$. We then proceed inductively, probing the vertices in $B_j \setminus v_j$, for $j > 1$, to ensure the robber is not in $B_i$, with $i \le j$. The robber is forced to move into $B_k$ where they will be captured.
\end{proof}

The bound in Theorem~\ref{pw} is tight for complete graphs $K_n$, as $\mathrm{pw}(K_n) = \zeta_1(K_n) = n - 1$. We note that proof of Theorem~\ref{pw} also holds for the zero-visibility Localization game, except that we must place a cop on all vertices of a bag as we sequentially probe the bags.  Therefore, we also have the following. 

\begin{lemma}
For any graph $G$, $\zeta_0(G) \le \mathrm{pw}(G)+1$.
\end{lemma}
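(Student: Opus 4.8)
The plan is to adapt the proof of Theorem~\ref{pw} with the minimal change the authors already flagged: instead of a single cop probing each vertex of $B_i \setminus v_i$ and using the cop at $u_i$ to detect the robber on the last vertex $v_i$ via a probe of $1$, we place cops on \emph{every} vertex of the current bag. First I would recall that in the zero-visibility game, every probe returns either $0$ (robber on the probed vertex) or $\ast$ (robber not on it), so a probe of $1$ is never available; this is exactly why the ``one extra cop'' is needed — we cannot finish off the vertex $v_i$ by adjacency.

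Concretely, let $(X,P)$ be a path-decomposition of $G$ of width $\mathrm{pw}(G)$, with bags $B_1, \dots, B_k$ linearly ordered and (after the same cleanup as in the proof of Theorem~\ref{pw}) each $|B_i| \le \mathrm{pw}(G)+1$. Sequentially, for $i = 1, 2, \dots, k$, the $\mathrm{pw}(G)+1$ cops occupy \emph{all} of $B_i$ and each probes its vertex. If the robber is on any vertex of $B_i$, some cop probes $0$ and the robber is captured. Otherwise we argue inductively, exactly as before, that after clearing $B_i$ the robber cannot have slipped back into any $B_j$ with $j \le i$: by the path-decomposition property, every vertex of $\bigcup_{j \le i} B_j$ that still has a neighbor outside lies in $B_i$ (since bags along the path form a separator), so once $B_i$ is fully cleared the contamination on $B_1 \cup \dots \cup B_{i-1}$ cannot spread back. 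Thus the robber is pushed rightward and is captured no later than the round in which the cops occupy $B_k$.

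I would present this as a short remark-style proof rather than repeating the full induction: state that the proof of Theorem~\ref{pw} goes through verbatim with ``probe $B_i \setminus v_i$ and use the cop at $u_i$'' replaced by ``probe all of $B_i$,'' which costs one additional cop, and that the separator argument guaranteeing no recontamination is identical (indeed it no longer even needs the assumption that $u$ has a neighbor in $B_i$). The one point worth checking carefully — and the only place the argument could go wrong — is the base/edge case: when $k=1$ the graph has at most $\mathrm{pw}(G)+1$ vertices and the claim is immediate, and when $G$ has a single vertex we have $\mathrm{pw}(G)=0$ and one cop suffices, so the ``$+1$'' is genuinely needed and the bound is not vacuous. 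No step here is a real obstacle; the main thing to get right is simply being explicit that a $1$-probe is unavailable in the zero-visibility game, which is precisely the reason the bound degrades from $\mathrm{pw}(G)$ to $\mathrm{pw}(G)+1$.
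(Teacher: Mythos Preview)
Your proposal is correct and follows exactly the approach the paper takes: the paper's own proof is a one-sentence remark that the argument of Theorem~\ref{pw} goes through verbatim once we place a cop on \emph{every} vertex of the current bag rather than on $B_i\setminus v_i$. Your write-up simply fleshes this out, and the separator/invariant argument you give is the standard one and is sound.
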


For graphs which are $C_4$-free (that is, do not contain the 4-cycle as a subgraph), we have the following bound for the one-visibility localization number in terms of two common graph parameters, including the domination number, written $\gamma (G).$ 

\begin{theorem}
If $G$ is $C_4$-free, then $\zeta_1(G) \le \gamma(G) + \Delta(G).$
\end{theorem}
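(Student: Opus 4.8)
The plan is to exhibit a winning strategy for $\gamma(G) + \Delta(G)$ cops in the one-visibility Localization game on a $C_4$-free graph $G$. The first step is to fix a dominating set $D = \{w_1, \ldots, w_{\gamma(G)}\}$ of $G$ and to station one \emph{sentinel} cop permanently on each $w_i$ throughout the game; these $\gamma(G)$ cops never move. The key structural observation, and the reason $C_4$-freeness enters, is that a vertex $x$ can be adjacent to at most one vertex of any fixed set $\{w_i, w_j\}$ if $x \notin D$ — more precisely, two distinct vertices $x, y$ cannot have two common neighbors, so the ``probe pattern'' of $x$ against $D$ is nearly rigid. In particular, once the sentinels have probed, the robber's vertex $r$ is pinned down to within the closed neighborhood $N[w_i]$ of a single dominating vertex $w_i$ (the one whose sentinel probed $0$ or $1$), and the $C_4$-freeness guarantees that $r$ together with the sentinel probes does not allow the robber to hide ambiguously across several $N[w_i]$'s simultaneously.

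Next I would deploy the remaining $\Delta(G)$ cops as a \emph{mobile team} to finish the capture once the sentinels have localized the robber to some $N[w_i]$. Since $|N[w_i]| \le \Delta(G) + 1$, the mobile team can probe all of $N(w_i)$ (that is, $\Delta(G)$ vertices), while the sentinel on $w_i$ already covers $w_i$ itself; if the robber is in $N[w_i]$, then either a mobile cop probes $0$ or the sentinel probes $0$, so the robber is captured. The subtlety is that the robber moves between rounds, so after the sentinels first detect the robber in $N[w_i]$, the robber may step out to a vertex at distance $2$ from $w_i$. Here I would argue that the sentinels' probes in the \emph{following} round still constrain the robber: because $D$ is dominating, the robber's new vertex $r'$ is in $N[w_j]$ for some $j$, and because $G$ is $C_4$-free the pair of consecutive sentinel probe-vectors (before and after the move) determines a small set of candidate vertices for $r'$ — small enough that the $\Delta(G)$ mobile cops can cover all of them. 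I expect to need a short case analysis on whether the robber stays put, moves within $N[w_i]$, or moves to distance $2$, using $C_4$-freeness each time to bound the candidate set by at most $\Delta(G)$ vertices not already covered by a sentinel.

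The main obstacle will be making the second step fully rigorous: controlling the candidate set for the robber's position \emph{after} a move, given only the one-visibility information (which is weaker than full distance information) from the stationary sentinels. Unlike the one-proximity game, a sentinel probe of $\ast$ merely says the robber is at distance $\ge 2$ from that $w_i$, so the localization to a single $N[w_i]$ is not immediate from one round of probes and may require the mobile cops to participate in the localization phase as well — in which case one must check that $\Delta(G)$ mobile cops suffice both to help localize and to capture. I would handle this by having the mobile team sweep: in each round they probe $N(w_i)$ for one dominating vertex $w_i$ at a time in a fixed cyclic order, using Lemma~\ref{lem:oneVisToSets}-style contamination tracking to show that the contaminated set is eventually confined to a single $N[w_i]$ and then eliminated. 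The $C_4$-free hypothesis is what prevents the contamination from ``leaking'' back through shared neighborhoods faster than the sweep can clean it, and verifying this non-leakage is the crux of the argument.
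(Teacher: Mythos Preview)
Your setup is exactly right and matches the paper: fix a dominating set $D$, keep $\gamma(G)$ sentinels permanently on $D$, and after the first round (when some sentinel $v_1$ probes $1$) send the $\Delta(G)$ mobile cops to cover all of $N(v_1)$. Where you go astray is in the finish. You worry that after the robber moves out of $N[v_1]$ the sentinel information alone is too weak, and you start reaching for a third round or a cyclic sweep. In fact the second round already determines the robber uniquely, and the missing observation is that the decisive second probe of distance $1$ comes not from a sentinel but from a \emph{mobile} cop.

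Here is the point you are missing. Let $u \in N(v_1)$ be the robber's actual position in round~1 (you do not need to know which neighbor of $v_1$ it is). In round~2 the mobile cops occupy every vertex of $N(v_1)$, including $u$. If the robber is not captured outright, they must have moved to some $w \notin N[v_1]$; since $w \in N[u]$ and $u \in N(v_1)$, this forces $w \in N(u) \setminus N[v_1]$, so $w$ is adjacent to $u$ and the mobile cop stationed at $u$ probes $1$. Meanwhile, because $D$ dominates, some sentinel $v_2$ is adjacent to $w$ and also probes $1$. Crucially $u \neq v_2$, since $u \notin D$ was established in round~1. Now $w$ is a common neighbor of the two distinct vertices $u$ and $v_2$, and $C_4$-freeness guarantees such a common neighbor is unique. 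The robber is located in round~2.

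So you do not need to track ``consecutive sentinel probe-vectors,'' you do not need to bound the candidate set by $\Delta(G)$ and spend a third round covering it, and you certainly do not need the cyclic sweep with contamination tracking. The whole argument is two rounds long, and the single use of $C_4$-freeness is exactly the common-neighbor uniqueness above.
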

\begin{proof}
Let $S = \{v_1, v_2, \dots, v_{\gamma(G)}\}$ be a dominating set of $G$ and have $\gamma(G)$-many cops probe each vertex of $S$ in each round. On the first probe, either some cop probes 0 and the robber is captured, or the robber is on some vertex $u \in V(G)\setminus S$, and there is at least one cop who probes 1. Say this is the cop at $v_1$. 

In the next round, we will use the additional $\Delta(G)$-many cops to probe each neighbor of $v_1$, and so the robber must move to avoid capture. The robber moves to $w \in V(G)\setminus S$, where $w$ is not a neighbor of $v_1$. On the next round of probes, since $S$ is a dominating set, there will be some other cop, say the one at $v_2$, who probes 1, while the cop at $u$ also probes 1. If $v_2$ were adjacent to a second neighbor of $u$, then $G$ would contain a 4-cycle. Therefore, the cops can uniquely determine the robber's location to be at $w$.
\end{proof}

Our next result provides an upper bound on $\zeta_1$ for a large family of graphs. A graph $H$ formed from $G$ by first taking a subgraph and then contracting some of the remaining edges is said to be a \emph{minor} of $G.$ The family of $K_h$-minor free graphs includes planar graphs in the case $h=5$. The following separator theorem for $K_h$-minor-free graphs will be useful for bounding the one-localization number.

\begin{theorem}[\cite{alon}]\label{ast}
If $h \ge 1$ is a fixed integer and $G$ is a $K_h$-minor-free graph of order $n$, then there are sets of vertices $A$, $B,$ and $C$ so that no vertex in $A$ is adjacent with a vertex in $B$, neither $A$ nor $B$ contains more than $2/3n$ vertices, and $C$ contains no more than $h^{3/2}\sqrt{n}$ vertices.
\end{theorem}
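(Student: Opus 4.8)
The plan is to prove the contrapositive: if $G$ has $n$ vertices and admits no \emph{balanced separator} of order at most $h^{3/2}\sqrt{n}$ (a set whose deletion leaves every component with at most $\tfrac{2}{3}n$ vertices), then $G$ contains a $K_h$ minor, contradicting the hypothesis. Once a balanced separator $C$ with $|C|\le h^{3/2}\sqrt{n}$ is in hand, the sets $A$ and $B$ required by the statement are obtained by distributing the components of $G-C$ into two bins: since each component has at most $\tfrac{2}{3}n$ vertices and the components total at most $n$, a greedy assignment that first places the at most two components of size exceeding $\tfrac{1}{3}n$ (one per bin) and then adds the remaining small pieces to whichever bin keeps it below $\tfrac{2}{3}n$ succeeds, and by construction no vertex of one bin is adjacent to a vertex of the other. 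So the crux is the separator-or-minor dichotomy.

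To build the minor I would grow a clique minor greedily while controlling branch-set sizes. Fix the target size $s=\lceil\sqrt{hn}\,\rceil$ and maintain pairwise disjoint connected \emph{branch sets} $B_1,\dots,B_k$ that are pairwise adjacent, each of order at most $s$, together with a single connected \emph{reservoir} $R$ (a component of $G-\bigcup_i B_i$) adjacent to every $B_i$. The family $B_1,\dots,B_k$ already realizes a $K_k$ minor, and $R$ is the region from which the next branch set will be carved. The central step is: while $k<h$, extract from $R$ a new connected set $B_{k+1}$ of order at most $s$ that is adjacent to each of $B_1,\dots,B_k$, and verify that a large connected reservoir survives inside what remains of $R$. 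Running breadth-first search inside $R$ produces a connected subtree reaching all $k\le h-1$ previous branch sets; the no-small-separator hypothesis forbids a bottleneck that would force this subtree to be large, and an averaging over the BFS layers of $R$ bounds its order by $s$. If at some stage no surviving flap is large, then $\bigcup_{i=1}^{k}B_i$ is itself a balanced separator with $k\le h-1$, so its order is at most $(h-1)s<h^{3/2}\sqrt{n}$ and we are finished; otherwise the iteration reaches $k=h$ and yields a $K_h$ minor, the desired contradiction.

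The main obstacle is exactly the extraction step, and it is here that the exponent $3/2$ is forced: the construction must simultaneously keep the number of branch sets below $h$ and each branch set below $\sqrt{hn}$, and these two budgets multiply to give $h^{3/2}\sqrt{n}$. Making the extraction rigorous requires quantifying, through the balanced-separator assumption, that the reservoir cannot be pinched into pieces too small to reach every previous branch set cheaply; this is the delicate counting argument of Alon, Seymour, and Thomas. A secondary technical point is maintaining $R$ as a single large connected flap adjacent to all current branch sets across iterations: after deleting $B_{k+1}$ one must argue, again from the absence of a small balanced separator, that some component of the updated graph remains large and adjacent to every $B_i$, so that it can serve as the new reservoir. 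I expect the parameter balancing in the extraction lemma, rather than the bookkeeping around $A$, $B$, and $C$, to be where the real work lies.
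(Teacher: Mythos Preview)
The paper does not prove this statement at all: Theorem~\ref{ast} is quoted verbatim from \cite{alon} and used as a black box in the proof of Theorem~\ref{plan}. There is therefore no ``paper's own proof'' to compare your proposal against.

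That said, your sketch does follow the genuine Alon--Seymour--Thomas strategy (grow a clique minor by iteratively carving connected branch sets of size roughly $\sqrt{hn}$ out of a large reservoir, using the absence of a small balanced separator to keep the process going). The actual argument in \cite{alon} is somewhat different in its mechanics: rather than BFS layers and averaging, it constructs each new branch set by a careful tree-growing procedure (a ``haven'' or minimal connected hitting set touching all previous branch sets), and the size bound comes from a counting argument over the tree paths rather than from layer widths. Your extraction step as written (``an averaging over the BFS layers of $R$ bounds its order by $s$'') is too vague to be a proof, and indeed this is precisely the nontrivial lemma in \cite{alon}; you have correctly identified where the work lies but have not supplied it. For the purposes of this paper, however, none of that is needed: simply citing \cite{alon} is what the authors do, and what you should do as well.
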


We refer to the set $C$ in Theorem~\ref{ast} as a \emph{separator}, and the sets $A$ and $B$ of order at most $2/3n$ as \emph{parts}. The following theorem gives an upper bound on the $\zeta_1$ number for several classes of graphs, including planar graphs. The proof uses a divide-and-conquer approach. Later in the paper, we will give bounds $\Omega(\sqrt{n})$ in square grids. We do not attempt to optimize constants in the upper bound.

\begin{theorem}\label{plan} If $h > 3$ and $n$ are integers, and $G$ is a $K_h$-minor free graph of order $n$, then $\zeta_1(G) = O(\sqrt{n}).$ In particular, if $G$ is planar of order $n$, then $\zeta_1(G) = O(\sqrt{n}).$
\end{theorem}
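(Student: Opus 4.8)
The plan is to run a divide-and-conquer recursion on $n$ built from the separator theorem (Theorem~\ref{ast}). Define $f(n)$ to be the maximum of $\zeta_1(H)$ over all connected $K_h$-minor-free graphs $H$ on at most $n$ vertices; by construction $f$ is non-decreasing, and $f(n)\le n-1$ always, which I would use as a base case for small $n$. For the inductive step, apply Theorem~\ref{ast} to $G$ to obtain sets $A,B,C$ with $|C|\le h^{3/2}\sqrt n$, $|A|,|B|\le \tfrac23 n$, and no edge between $A$ and $B$.

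The cop strategy I propose: permanently place one cop on each vertex of $C$, using at most $h^{3/2}\sqrt n$ cops. In any round in which the robber sits on a vertex $c\in C$, the cop on $c$ probes $0$, so a surviving robber never occupies a vertex of $C$; its successive positions therefore trace a walk in $G-C = G[A]\,\sqcup\,G[B]$, a graph with no $A$--$B$ edge. Hence a surviving robber is confined forever to a single connected component $K$ of $G[A]$ or of $G[B]$, and every such $K$ is a connected $K_h$-minor-free graph on at most $\tfrac23 n$ vertices. The remaining $\max_K \zeta_1(K)\le f(\lfloor 2n/3\rfloor)$ cops then process these components one at a time: for each component $K$ in turn they execute a winning $\zeta_1(K)$-cop strategy for the one-visibility game on $K$ viewed as an abstract graph. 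Since $K$ is an induced subgraph of $G$ and a robber in $K$ cannot leave $K$, each such cop's probe returns the same value $0$, $1$, or $\ast$ in $G$ as it would in $K$; so if the robber lies in $K$ it is caught within the finitely many rounds of that strategy, while if it does not, all these probes are $\ast$ throughout and the cops advance to the next component. With finitely many components this terminates, and the probes of the cops on $C$ merely supply extra information that may be discarded. This yields $f(n)\le h^{3/2}\sqrt n + f(\lfloor 2n/3\rfloor)$, which unrolls to $f(n)\le h^{3/2}\sqrt n\sum_{i\ge 0}(2/3)^{i/2} = O(\sqrt n)$ since the geometric series converges; taking $h=5$ and recalling that planar graphs are $K_5$-minor-free gives the planar case.

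I expect the main obstacle to be the bookkeeping forced by disconnectedness: Theorem~\ref{ast} does not keep $G[A]$ or $G[B]$ connected, so one cannot literally ``recurse on $G[A]$''. The fix above — a single pool of $\max_K\zeta_1(K)$ cops sweeping the components sequentially, justified by the fact that the stationary cops on $C$ permanently trap a surviving robber in one component and that probes into a component not containing the robber all return $\ast$ — is the heart of the argument, together with the (routine) check that executing $K$'s strategy inside the larger graph $G$ is faithful because $K$ is induced. A minor point worth recording is that one should resist deriving this from $\prox_1$ via Theorem~\ref{thm:prox_zeta_Delta}, since the extra factor $\Delta(G)$ there is far too lossy for an $O(\sqrt n)$ bound.
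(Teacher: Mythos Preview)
Your proposal is correct and follows essentially the same approach as the paper: apply the Alon--Seymour--Thomas separator theorem, keep $|C|\le h^{3/2}\sqrt{n}$ cops permanently on the separator, and recurse on the pieces to obtain a geometric-series bound summing to $h^{3/2}\sqrt{n}/(1-\sqrt{2/3})+O(1)$. Your handling of the disconnectedness of $G[A]$ and $G[B]$ (sweeping connected components sequentially with a reusable pool of cops, and checking that probes inside an induced component return the same values in $G$ as in the component) is slightly more explicit than the paper's, which simply lets the inductive statement range over possibly disconnected $K_h$-minor-free graphs.
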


\begin{proof} 
Define the function $f(m)=h^{3/2}\sqrt{m}\left(\frac{1}{1-\sqrt{2/3}}\right)+ \sqrt{n}$.  
We write $P(m)$ to be the statement that for each $K_h$-minor-free graph $G$ of order $m$, there exists a strategy using at most $f(m)$ cops to capture the robber on $G$. We apply induction, assuming that $P(m)$ is true for $1 \leq m \leq n-1$, and show that $P(n)$ holds. Once that is established, the proof of the theorem follows.

The base cases are when $1 \leq m \leq \sqrt{n}$. In any such graph, we can place a cop on every vertex to capture the robber in one round. This uses at most $\sqrt{n} \leq f(m)$ cops, and so $P(m)$ is true in these cases. 

For the inductive step, for a graph $G$ of order $n$, we apply Theorem~\ref{ast} to give a separator $C$ of $G$
 with $|C| \le h^{3/2}\sqrt{n}$, with parts $A$ and $B$ of cardinalities at most $2n/3.$ 
We note that $A$ and $B$ are not necessarily connected; however, they are both $K_h$-minor-free graphs. 

The cops employ a strategy in two phases. During both phases (and hence, in all rounds), at most $h^{3/2}\sqrt{n}$ cops are played on $C$ so that each vertex in $C$ contains a cop; we label this set of cops by $X$. As such, the robber cannot move between $A$ and $B$ without being captured on a cop move.  

We know by the inductive hypothesis that a strategy exists to capture the robber on $A$ using at most $f(|A|)$ cops. Therefore in the first phase, we play this strategy on $A$ using the cops while also playing the cops in $X$ on $C$. After this, the robber will be captured if it was ever on a vertex in $A$ or $C$, and so we may assume that the robber is now in $B$ after the cops' last move.

There similarly exists a strategy to capture the robber on $B$ using at most $f(|B|)$ cops, and in the second phase, we play this strategy on $B$ while also playing the cops in $X$ on $C$. After this process, the robber will be captured either on a vertex of $B$ or $C$. 

Assuming without loss of generality that $|A| \geq |B|$, we used at most 
\begin{eqnarray*}
\max(h^{3/2}\sqrt{n}  +f(|A|), h^{3/2}\sqrt{n}  +f(|B|)) &\leq & h^{3/2}\sqrt{n}+f(|A|) \\
&\leq & h^{3/2}\sqrt{n}  +h^{3/2}\sqrt{2n/3} \left( \frac{1}{1-\sqrt{2/3}}\right)+ \sqrt{n} \\
&=& h^{3/2}\sqrt{n}\left(\frac{1}{1-\sqrt{2/3}}\right)+ \sqrt{n}
\end{eqnarray*}
cops to capture the robber on $G$. In the first inequality, we used the fact that $|f(B)| \le |f(A)|,$ while the second follows by inductive hypothesis. Hence, $P(n)$ holds, and the proof follows. \end{proof}

Interestingly, we show that the bound in Theorem~\ref{plan} is tight in the sense that there exist planar graphs $G$ (in particular, Cartesian grid graphs) with $\zeta_1(G) = \sqrt{|V(G)|}+O(1)$.

\subsection{Lower bounds from isoperimetric inequalities}

The \emph{isoperimetric} problem of a graph $G$ asks for the minimum cardinality of the boundary of a set of vertices, given the set of vertices has cardinality $k$. For a subset of vertices $S$, this border can be either the vertex border $\delta(S) = N[S] \setminus S$, or the edge border $$\partial(S) = |E(S, \overline{S})| = |\{(u,v) \in E(G): u \in S, v \notin S\}|,$$ which is the set of edges that have exactly one endpoint in $S$. 

We consider the following two standard isoperimetric parameters for graphs:

\[\Phi_E(G,k) = \min_{S\subseteq V : |S|=k} |\partial(S)|, \]
\[\Phi_V(G,k) = \min_{S\subseteq V : |S|=k} |\delta(S)|. \]

The \emph{isoperimetric problem} for either of these two parameters asks for an exact evaluation of $\Phi_E(G,k)$ or $\Phi_V(G,k)$, while an \emph{isoperimetric inequality} is a bound on these values. 
The edge isoperimetric problem is also studied along with the problem of maximizing the number of edges between vertices of a $k$-set of vertices, $\max_{S:|S|=k}|E(S, S)|$. For a survey on edge isoperimetric inequalities, see \cite{edge-iso-survey}.

The two isoperimetric problems are closely related. 
We have that $\Phi_V(G,k) \leq \Phi_E(G,k)$. 
Since each vertex in $\delta(S)$ is incident to at most $\Delta(G)$ vertices in $S$, it follows that $\Phi_E(G,k) \leq \Delta(G) \Phi_V(G,k)$. 
Therefore, these parameters differ at most by a factor of $\Delta(G)$:
\[ \frac{\Phi_E(G,k)}{\Delta(G)} \leq \Phi_V(G,k) \leq \Phi_E(G,k) .
\]

The \emph{isoperimetric peak} is the maximum over the isoperimetric numbers on the graph:
\[\Phi_E(G) = \max_k \Phi_E(G,k),\]
\[\Phi_V(G) = \max_k \Phi_V(G,k).\]
The vertex isoperimetric peak has been explicitly studied for trees \cite{iso-peak-trees-BC-2009,iso-peak-trees-OY-2008,iso-peak-trees-Vrto-2010}, although further results for the isoperimetric peak problem appear implicitly within many works. 

We introduce a modification to this concept inspired by the $h$-index metric of citation metrics. For some function $f:\mathbb{Z}\rightarrow \mathbb{Z}$, define the \emph{$h$-index function} on $f$, $\hindex(f)$, as follows:
\[
\hindex(f) = \max\{h \in \mathbb{Z} : \text{for some }  k_1, \text{ we have } f(k) \geq h\text{ for } k_1 \le k \le k_1+h-1\}.
\]
In particular, there are $\hindex(f)$ consecutive integers $k_1 \le k \le k_1+h-1$ with $f(k)\geq \hindex(f)$. 
The \emph{vertex-$h$-index} of a graph is 
\[\hindex_V(G) = \hindex(\Phi_V(G,k)),\]
and similarly, the \emph{edge-$h$-index} of a graph is 
\[\hindex_E(G) = \hindex(\Phi_E(G,k)).\]
See Figure \ref{fig:h_index} for an illustration of $\hindex_V(G)$.  

The following lemma establishes inequalities for the $\hindex_V$ and $\hindex_E$ parameters.
\begin{lemma} \label{lem:h_index_self_bound}
For a graph $G$, we have that 
\[
\frac{\hindex_E(G)}{\Delta(G)} \leq \hindex_V(G) \leq \hindex_E(G).
\]
\end{lemma}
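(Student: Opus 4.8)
The plan is to derive each inequality directly from the corresponding inequality between $\Phi_V(G,k)$ and $\Phi_E(G,k)$ that was already established, namely $\Phi_V(G,k) \le \Phi_E(G,k)$ and $\Phi_E(G,k) \le \Delta(G)\,\Phi_V(G,k)$, holding for every $k$. The key observation is that $\hindex$ is monotone: if $f(k) \le g(k)$ for all $k$, then $\hindex(f) \le \hindex(g)$, because any witnessing run of $h$ consecutive integers for $f$ with $f(k) \ge h$ is also a witnessing run for $g$. This monotonicity, applied with $f = \Phi_V(G,\cdot)$ and $g = \Phi_E(G,\cdot)$, immediately gives $\hindex_V(G) \le \hindex_E(G)$, the right-hand inequality.

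For the left-hand inequality $\hindex_E(G)/\Delta(G) \le \hindex_V(G)$, I would argue as follows. Let $\Delta = \Delta(G)$ and let $h = \hindex_E(G)$, so there is some $k_1$ with $\Phi_E(G,k) \ge h$ for all $k_1 \le k \le k_1 + h - 1$. From $\Phi_E(G,k) \le \Delta\,\Phi_V(G,k)$ we get $\Phi_V(G,k) \ge h/\Delta$, hence $\Phi_V(G,k) \ge \lceil h/\Delta \rceil$ since $\Phi_V$ is integer-valued, for all $k$ in this range of length $h$. Now set $h' = \lceil h/\Delta \rceil$; since $\Delta \ge 1$ we have $h' \le h$, so the sub-range $k_1 \le k \le k_1 + h' - 1$ also satisfies $\Phi_V(G,k) \ge h'$, which shows $\hindex_V(G) \ge h' \ge h/\Delta = \hindex_E(G)/\Delta(G)$.

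I do not anticipate a genuine obstacle here; the only point requiring a little care is making sure the run of consecutive integers witnessing the edge-$h$-index is long enough to contain a run of the (shorter) required length for the vertex-$h$-index, which is exactly why the inequality $h' = \lceil h/\Delta\rceil \le h$ is invoked. One should also note at the outset that $\Phi_V$ and $\Phi_E$ are integer-valued and nonnegative, so that the ceiling step and the monotonicity argument are valid; this follows from their definitions as minimum cardinalities of vertex or edge boundaries.
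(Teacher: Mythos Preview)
Your proposal is correct and follows essentially the same approach as the paper: both inequalities are obtained directly from the pointwise inequalities $\Phi_V(G,k)\le \Phi_E(G,k)\le \Delta(G)\,\Phi_V(G,k)$ by taking a witnessing run for one $h$-index and extracting from it a (possibly shorter) witnessing run for the other. Your explicit use of the ceiling $h'=\lceil h/\Delta\rceil$ and the observation $h'\le h$ make a step precise that the paper handles only implicitly, but the argument is otherwise the same.
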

\begin{proof}
Let $h=\hindex_V(G)$. 
By the definition of $\hindex_V(G)$, there exists an integer $k_a$ such that each $k \in [k_a, \ldots, k_a+h-1]$ satisfies $\Phi_V(G,k) \geq h$. However, since $\Phi_E(G,k) \geq \Phi_V(G,k)$, this gives that $\Phi_E(G,k) \geq h$ for $k \in [k_a, \ldots, k_a+h-1]$. Therefore, $\hindex_E(G)\geq h = \hindex_V(G)$ by the definition of $\hindex_E$. 

Let $h=\hindex_E(G)$. 
By the definition of $\hindex_E(G)$, there exists an integer $k_a$ such that each $k \in [k_a, \ldots, k_a+h-1]$ has $\Phi_E(G,k) \geq h$. However, since $\Phi_V(G,k) \geq \Phi_E(G,k)/\Delta(G)$, this gives that $\Phi_V(G,k) \geq h /\Delta(G)$ for $k \in [k_a, \ldots, k_a+\lceil h/\Delta(G) \rceil-1] \subseteq [k_a, \ldots, k_a+h-1]$. Therefore, $\hindex_V(G)\geq h/\Delta(G)  = \hindex_E(G)/\Delta(G)$ by the definition of $\hindex_V$. 
\end{proof}

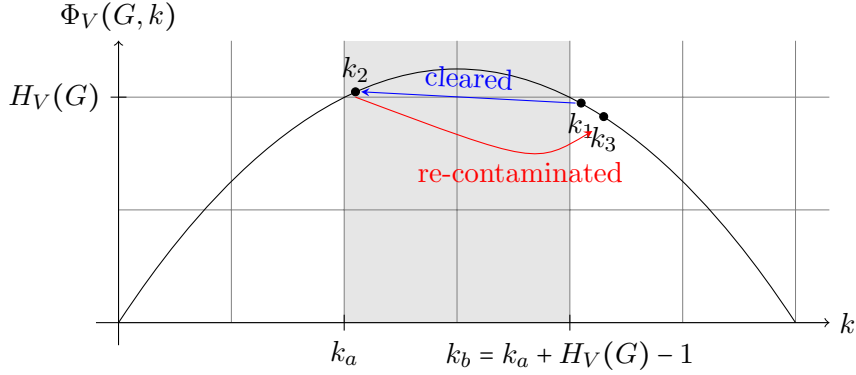
\begin{figure}
    \centering
\begin{tikzpicture}[scale=1.5]
\draw[fill=gray!20] (2,2) parabola bend (3, 2.25) (4,2);
\fill[fill=gray!20] (2,2.5) rectangle (4,0);
  \draw[style=help lines] 
  (0,0) grid (6.3,2.5);
  \draw[->] (-0.2,0) -- (6.3,0) node[right] {$k$};
  \draw[->] (0,-0.2) -- (0,2.5) node[above] {$\Phi_V(G,k)$};

  \draw[shift={(2,0)}] (0pt,2pt) -- (0pt,-2pt) node[below] {$k_a$};
  \draw[shift={(4,0)}] (0pt,2pt) -- (0pt,-2pt) node[below] {$k_b = k_a+\hindex_V(G)-1$};

\draw[shift={(0,2)}] (2pt,0pt) -- (-2pt,0pt) node[left] {$\hindex_V(G)$};

  \draw (0,0) parabola bend (3, 2.25) (6,0);

\filldraw[black] (4.1,1.9475) circle (1pt) node[anchor=north]{$k_1$};
    \filldraw[black] (2.1,2.0475) circle (1pt) node[anchor=south]{$k_2$};
    \filldraw[black] (4.3,1.8275) circle (1pt) node[anchor=north]{$k_3$};

\draw[blue,-{stealth[blue]}] (4.05,1.9475) -- node[above,sloped] {cleared}(2.15,2.0475);
\draw[red,-{stealth[red]}] (2.1,2)  .. controls (3.7, 1.4) .. node[below] {re-contaminated} (4.2,1.7);
    
\end{tikzpicture}
    \caption{A graph of $\Phi(G,k)$ that illustrates $\hindex_V(G)$, where a contiguous set of $\hindex_V(G)$ integers each have $\Phi(G,k) \geq \hindex_V(G)$.  For Theorem~\ref{thm:h_index}, if the cops manage to reduce the number of contaminated vertices $k$ below $k_b+1$ (in our example, moving from $k=k_1$ to $k=k_2$ that is in the gray region), then the contamination is guaranteed to grow to some cardinality at least $k_b+1$ (that is, moving from $k=k_2$ to $k=k_3$ in our example, which is to the right of the gray region).}
    \label{fig:h_index}
\end{figure}

The following theorem gives a lower bound on $\text{prox}_1(G)$ in terms of $\hindex_V(G)$, and hence, gives a lower bound for $\zeta_1(G).$

\begin{theorem} \label{thm:h_index}
    If $G$ is a graph, then
    \[
    \text{prox}_1(G) > \frac{\hindex_V(G)}{\Delta(G)+1}.
    \]
\end{theorem}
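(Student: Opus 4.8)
The plan is to use the set-based reformulation of the one-proximity game from Lemma~\ref{lem:oneVisToSets} together with a potential-function (or "blocking set") argument keyed to the definition of $\hindex_V$. Suppose for contradiction that the cops win the one-proximity game on $G$ using $k$ cops, where $k \le \hindex_V(G)/(\Delta(G)+1)$. By Lemma~\ref{lem:oneVisToSets}, after the cops' $t$th move the contaminated set is $\contaminated_t$, with $\contaminated_1 = V(G)$ and $\contaminated_{t+1} = (\contaminated_t \cup \delta(\contaminated_t)) \setminus N[V_{t+1}]$, where $|V_{t+1}| \le k$. The cops win means $\contaminated_T = \emptyset$ for some $T$; in particular there is a round in which $|\contaminated_t|$ drops from above $k_b$ to at most $k_b$, where $k_b = k_a + \hindex_V(G) - 1$ and $k_a$ is the start of the guaranteed-large window from the definition of $\hindex_V(G)$. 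The intuition, as Figure~\ref{fig:h_index} suggests, is that once the contaminated set has size in the window $[k_a, k_b]$, its vertex boundary is at least $\hindex_V(G)$, so after the robber's spreading move the set grows by roughly $\hindex_V(G)$ vertices, and $k(\Delta(G)+1)$ cop-cleared vertices (each $N[v]$ has at most $\Delta(G)+1$ vertices) cannot compensate; the contaminated set therefore cannot be shrunk through the window.

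First I would make the window argument precise. Let $T$ be the first round with $|\contaminated_T| < k_a$ (if the contaminated set reaches $\emptyset$ it certainly passes below $k_a$; if $V(G)$ itself has fewer than $k_a$ vertices the bound is vacuous since then $\Phi_V(G,k)=0$ for $k\ge |V(G)|$ forces $\hindex_V(G) \le |V(G)|$, a case I would handle separately or absorb into the constant). Consider the last round $t < T$ with $|\contaminated_t| \ge k_a$; then $|\contaminated_{t+1}| < k_a \le |\contaminated_t|$, so the cop move strictly decreased the set. Now I would show this is impossible when $|\contaminated_t|$ lies in $[k_a, k_b]$: since $|\contaminated_t| \le k_b = k_a + \hindex_V(G)-1$, and also $|\contaminated_t|\ge k_a$, the size $|\contaminated_t|$ is one of the $\hindex_V(G)$ consecutive integers in the defining window (here I need $|\contaminated_t| \le k_b$, which may require first arguing we can track the set as it descends one "level" at a time — the set size changes by a bounded amount per round since $|\contaminated_{t+1}| \le |\contaminated_t| + |\delta(\contaminated_t)|$ and we only need the first time it enters $[k_a,k_b]$ from above, which it must since at round $1$ it has size $|V(G)| \ge k_b+1$ or the bound is vacuous). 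For such a set, $|\delta(\contaminated_t)| \ge \Phi_V(G, |\contaminated_t|) \ge \hindex_V(G)$. Hence $|\contaminated_t \cup \delta(\contaminated_t)| \ge |\contaminated_t| + \hindex_V(G)$. Removing $N[V_{t+1}]$ deletes at most $k(\Delta(G)+1) \le \hindex_V(G)$ vertices, so $|\contaminated_{t+1}| \ge |\contaminated_t| + \hindex_V(G) - k(\Delta(G)+1) \ge |\contaminated_t| \ge k_a$, contradicting $|\contaminated_{t+1}| < k_a$.

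The main obstacle I anticipate is the bookkeeping around "the set must pass through the window rather than leaping over it." I need to guarantee that when $|\contaminated_t|$ first dips below $k_b+1$ it is still at least $k_a$ — i.e., that the downward jump in a single round is at most $\hindex_V(G)$. This is not automatic from the recursion alone, so I would argue it as follows: take $t$ to be the first round with $|\contaminated_t| \le k_b$. Then $|\contaminated_{t-1}| \ge k_b+1$, so by the window property $|\delta(\contaminated_{t-1})| \ge \hindex_V(G)$ is not directly what I want; instead I use that the cop move removes at most $k(\Delta(G)+1)$ vertices while the spreading step only adds vertices, so $|\contaminated_t| \ge |\contaminated_{t-1}| - k(\Delta(G)+1) \ge (k_b+1) - \hindex_V(G) = k_a$. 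Thus $|\contaminated_t| \in [k_a, k_b]$, and from this round onward the argument of the previous paragraph shows the size can never drop below $k_a$ again, so in particular $\contaminated_{t'} \ne \emptyset$ for all $t' \ge t$, contradicting that the cops win. Finally, since $\prox_1(G) \le \zeta_1(G)$, the stated strict inequality $\prox_1(G) > \hindex_V(G)/(\Delta(G)+1)$ follows — the strictness coming from the fact that $k \le \hindex_V(G)/(\Delta(G)+1)$ led to a contradiction, so any winning $k$ must exceed this bound; I would double-check whether a floor is needed and phrase the final line accordingly.
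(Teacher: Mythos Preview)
Your proposal is correct and follows essentially the same argument as the paper: track the cardinality of the contaminated set, note that a single cop round removes at most $k(\Delta(G)+1)\le \hindex_V(G)$ vertices so the set cannot leap over the window $[k_a,k_b]$, and once the size lies in that window the isoperimetric lower bound guarantees spreading adds back at least $\hindex_V(G)$ vertices, so the size never falls below $k_a$. The paper's write-up is slightly tidier (it also records $k_a>1$ and dispatches the trivial case $\hindex_V(G)<\Delta(G)+1$ up front), but the structure and key inequalities are identical to yours.
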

\begin{proof}
    We may assume $\hindex_V(G)\geq (\Delta(G)+1)$, or else the proof is immediate.  
    See Figure \ref{fig:h_index} as an aid to this proof. 
    As a high-level overview of the proof, $p$ cops can clear at most $p (\Delta(G)+1)$ vertices per round (that is, a cop on $u$ clears the at most $\Delta+1$ vertices in $N[u]$)  and the contamination spreads at a rate of at least
    $\Phi_V(G,k)$, with $k$ being the number of currently contaminated vertices. 
    If $\Phi_V(G,k)$ is larger than $p  (\Delta(G)+1)$ for enough contiguous values $k$, then there will always be some point in the game where the contamination will grow faster than the cops can clear the contamination. 

    Consider a game in which the cop player controls $p=\big\lfloor \frac{\hindex_V(G)}{\Delta(G)+1} \big\rfloor$ cops. 
    By the definition of $\hindex_V(G)$, there must exist $k_a$ and $k_b=k_a+\hindex_V(G)-1$ such that for any value $k\in \{k_a, k_a+1, \ldots, k_b\}$ we have $\Phi_V(G,k)\geq \hindex_V(G)$. 
    Note that $k_a>1$, since $$\Phi_V(G,1)=\min_{v\in V(G)} \deg(v)  <  \Delta(G)+1 \leq \hindex_V(G),$$ so the inequality $\Phi_V(G,1)\geq \hindex_V(G)$ does not hold. 

    Suppose there are at least $k_b+1$ contaminated vertices just before the cops move.  
    If a cop plays on vertex $v$, then the vertices on $N[v]$ that were contaminated are no longer contaminated.  
    As a consequence, after this cop round
    at most $p(\Delta(G)+1) \leq \hindex_V(G)$ vertices have been cleared, which implies that at least $$k_b +1-\hindex_V(G) = (k_a+\hindex_V(G)-1)+1-\hindex_V(G) = k_a$$ vertices remain contaminated.
    That is, in a round where the cops reduce the contaminated vertices below $k_b+1$, there will always be at least $k_a$ contaminated vertices remaining. 
    For the cops to win by eliminating all contaminated vertices, there must be some round where between $k_a$ and $k_b$ vertices are contaminated. Suppose we are in such a round after the cops move. 
    
    Now $\Phi_V(G,k) \geq \hindex_V(G)$ for each $k$ with $k_a \leq k \leq k_b$, so on the contamination round at least $\hindex_V(G)$ clear vertices become re-contaminated, and so at least $k_a+\hindex_V(G) = k_b+1$ vertices are now contaminated. 
    That is, if the cops ever reduce the number of contaminated vertices to be $k_b$ or fewer, then there will be $k_b+1$ or more contaminated vertices after the subsequent contamination round. 
    Therefore, we conclude that the cops can never reduce the number of contaminated vertices below $k_a$.
\end{proof}

An analogous bound for $\hindex_E$ can be found using Lemma~\ref{lem:h_index_self_bound} on Theorem~\ref{thm:h_index} is the following.

\begin{corollary} \label{cor:h_index_edges}
        For a graph $G$, 
    \[
    \prox_1(G) > \frac{\hindex_E(G)}{(\Delta(G)+1)\Delta(G)}.
    \]
\end{corollary}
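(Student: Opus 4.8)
The plan is simply to compose the two results that precede the corollary: the vertex-to-edge comparison of Lemma~\ref{lem:h_index_self_bound} and the proximity lower bound of Theorem~\ref{thm:h_index}. First I would recall that Lemma~\ref{lem:h_index_self_bound} gives $\hindex_V(G) \ge \hindex_E(G)/\Delta(G)$, and Theorem~\ref{thm:h_index} gives $\prox_1(G) > \hindex_V(G)/(\Delta(G)+1)$. Chaining these two inequalities yields
\[
\prox_1(G) > \frac{\hindex_V(G)}{\Delta(G)+1} \ge \frac{\hindex_E(G)}{(\Delta(G)+1)\Delta(G)},
\]
which is precisely the asserted bound, so the argument is essentially a one-line substitution.

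The only point needing a word of care is the degenerate regime. If $\hindex_E(G)$ is so small that the right-hand side is at most $1$ (for instance when $\Phi_E(G,k)$ is small for all $k$), the inequality holds trivially because $\prox_1(G) \ge 1$; otherwise both invoked statements apply verbatim, since Theorem~\ref{thm:h_index} and Lemma~\ref{lem:h_index_self_bound} are stated for arbitrary graphs with no extra hypotheses. Thus there is no genuine obstacle here — all of the real work was carried out in the proofs of Lemma~\ref{lem:h_index_self_bound} and Theorem~\ref{thm:h_index}.

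If one wanted a sharper constant than $(\Delta(G)+1)\Delta(G)$, one could instead mimic the contamination-spread argument of Theorem~\ref{thm:h_index} directly with the edge boundary $\partial$ in place of $\delta$, tracking how many cleared incident edges a single cop move can account for; but since the corollary as stated only claims the composed bound, I would not pursue that refinement here.
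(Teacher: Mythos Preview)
Your proposal is correct and matches the paper's approach exactly: the corollary is obtained by chaining Lemma~\ref{lem:h_index_self_bound} (which gives $\hindex_V(G)\ge \hindex_E(G)/\Delta(G)$) with Theorem~\ref{thm:h_index}. The paper states this as a one-line remark rather than writing out a proof, so your additional comments about degenerate cases and possible sharpenings go beyond what is needed.
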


Since $\zeta_1(G) \geq \prox_1(G)$, both Theorem~\ref{thm:h_index} and Corollary~\ref{cor:h_index_edges} allow results in isoperimetric bounds to be applied to yield lower bounds on the $k$-visibility Location number. The primary challenge remaining is that even when the isoperimetric parameters are known exactly, computing $\hindex_V(G)$ and $\hindex_E(G)$ is often a complex task. As an example, the vertex isometric peak of a binary tree of radius $d$ is known to be asymptotically equal to $d/2$ \cite{HY2015}; it is also known that the number of vertices in the vertex border is small for many sporadic values of $k$. 
However, we can find a lower bound of $H_V$ and $H_E$ using the corresponding isoperimetric peak, showing that an $h$-index and its corresponding isoperimetric parameter differ by a small multiplicative constant without complicated direct analysis.

\begin{theorem} \label{thm:peak_to_h_V}
    For a graph $G$, 
    \[
         \frac{\Phi_V(G)}{2} \left(1 + \frac{1}{2\Delta(G) + 1}\right) \leq \hindex_V(G)  \leq \Phi_V(G).
    \]
\end{theorem}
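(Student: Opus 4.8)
The plan is to handle the two inequalities separately, the right-hand one being routine and the left-hand one carrying the content.

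The bound $\hindex_V(G)\le\Phi_V(G)$ is immediate from the definitions: if $h=\hindex_V(G)$ then there is some $k_1$ with $\Phi_V(G,k)\ge h$ for all $k_1\le k\le k_1+h-1$, and in particular $\Phi_V(G)\ge\Phi_V(G,k_1)\ge h$.

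For the lower bound I would set $P=\Phi_V(G)$, fix $k^\ast$ with $\Phi_V(G,k^\ast)=P$, and first establish two one-step estimates for the function $k\mapsto\Phi_V(G,k)$, valid for $1\le k<n$: (a) $\Phi_V(G,k)\le\Phi_V(G,k+1)+1$, obtained by deleting one vertex $v$ from an optimal $(k+1)$-set $T$, using $\delta(T\setminus\{v\})\subseteq\delta(T)\cup\{v\}$; and (b) $\Phi_V(G,k+1)\le\Phi_V(G,k)+\Delta(G)$, obtained by adjoining a vertex $v\in\delta(S)$ to an optimal $k$-set $S$, using $\delta(S\cup\{v\})\subseteq\delta(S)\cup N(v)$. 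I would also record the elementary inequalities $\Phi_V(G,k)\le k\,\Delta(G)$ (every vertex of $\delta(S)$ has a neighbour in $S$, and each vertex of $S$ has at most $\Delta(G)$ neighbours) and $\Phi_V(G,k)\le n-k$ (since $\delta(S)\subseteq V(G)\setminus S$). Iterating (a) to the right and (b) to the left of $k^\ast$ then gives $\Phi_V(G,k^\ast+j)\ge P-j$ and $\Phi_V(G,k^\ast-i)\ge P-i\,\Delta(G)$ for all $i,j\ge 0$, and the bounds $P\le k^\ast\Delta(G)$ and $P\le n-k^\ast$ guarantee that every index used stays inside $\{1,\dots,n\}$ over the ranges we need, so no boundary truncation occurs.

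It follows that for each integer $h$ with $1\le h\le P$ the level set $\{k:\Phi_V(G,k)\ge h\}$ contains all integers from $k^\ast-\lfloor(P-h)/\Delta(G)\rfloor$ to $k^\ast+(P-h)$, a block of $(P-h)+\lfloor(P-h)/\Delta(G)\rfloor+1$ consecutive integers, so that $\hindex_V(G)\ge h$ as soon as
\[
(P-h)+\big\lfloor(P-h)/\Delta(G)\big\rfloor+1\ \ge\ h .
\]
Writing $q=P-h$, this reads $2q+\lfloor q/\Delta(G)\rfloor+1\ge P$; ignoring the floor it is enough that $q\,(2\Delta(G)+1)\ge(P-1)\Delta(G)$, and then $h=P-q$ may be taken as large as $\frac{P(\Delta(G)+1)+\Delta(G)}{2\Delta(G)+1}=\frac{P}{2}\big(1+\frac{1}{2\Delta(G)+1}\big)+\frac{\Delta(G)}{2\Delta(G)+1}$. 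Thus the ``continuous'' form of the argument overshoots the target $\frac{P}{2}\big(1+\frac{1}{2\Delta(G)+1}\big)$ by exactly $\frac{\Delta(G)}{2\Delta(G)+1}<1$, which is precisely the room one needs.

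The one place requiring care — and the step I expect to be the main obstacle — is confirming that this room genuinely absorbs the rounding coming from $\lfloor q/\Delta(G)\rfloor$, i.e.\ that some integer $h\ge\frac{P}{2}\big(1+\frac{1}{2\Delta(G)+1}\big)$ still satisfies $2(P-h)+\lfloor(P-h)/\Delta(G)\rfloor+1\ge P$. I would do this by writing $P-h=\Delta(G)s+r$ with $0\le r<\Delta(G)$, choosing $s$ minimal and then $r$ minimal subject to $s(2\Delta(G)+1)+2r+1\ge P$, and running a short case check on $r$ to pin down the largest admissible $h$ and verify it meets the bound; small cases, where the target fails to exceed $2$, are dispatched directly. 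The whole argument is conceptually elementary once (a) and (b) are in hand — the only real work is this discrete bookkeeping.
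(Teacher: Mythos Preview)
Your approach is essentially the same as the paper's: both establish the two one-step Lipschitz estimates (your (a) and (b) are exactly the paper's inequalities \eqref{align_phi_v} and \eqref{align_phi_e}) and then count the block of consecutive $k$ around the peak $k^\ast$ on which $\Phi_V(G,k)$ stays above a chosen threshold. You are in fact more careful than the published argument about boundary indices and floors---the paper writes its block count as the real number $\frac{P\Delta}{2\Delta+1}+1+\frac{P}{2\Delta+1}$ and never addresses the rounding you correctly flag as ``the main obstacle.''
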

\begin{proof}
The upper bound is clear by the definition of $H_V$ and $\Phi_V$. For the lower bound, we will show that two consecutive values cannot have their $\Phi_V(G,k)$ values differ too much.    

Consider a set $S$ of cardinality $k+1$. If we remove any vertex $u$ from $S$, then the only vertex that can be in $\delta(S\setminus\{u\})$ that is not in $\delta(S)$ is the vertex $u$, and so 
$|\delta(S)| \geq |\delta(S\setminus\{u\})|-1$. 
As a consequence, if $|\delta(S)| = \Phi_V(G,k+1)$ and $u \in S$, then we have that
\begin{align} \label{align_phi_v}
\Phi_V(G,k+1) = |\delta(S)| \geq |\delta(S\setminus\{u\})| -1 \geq \min_{S':|S'|=k} |\delta(S')|-1 = \Phi_V(G,k)-1. 
    \end{align}

Now consider a set $S$ of cardinality $k-1$. If we add a vertex $v$ to $S$, then any vertex in $\delta(S \cup \{v\})$ that is not in $\delta(S)$ must be a neighbor of $v$. As a consequence, $\delta(S \cup \{v\})$ contains at most $\Delta(G)$ more vertices than $\delta(S)$, and so similar to the previous case, \begin{align} \label{align_phi_e}
\Phi_V(G,k-1) \geq \Phi_V(G,k)-\Delta(G) .
\end{align}

Let $k_p$ be a value such that $\Phi_V(G,k_p)=\Phi_V(G)$. 
By recursively applying inequality \eqref{align_phi_v}, we find htat
$$\Phi_V(G,k_p+i) \geq \Phi_V(G,k_p) - i \geq \Phi_V(G) - \Phi_V(G) \Delta(G)/ (2\Delta(G)+1)$$ for each $i \in \{0, \ldots, \Phi_V(G) \Delta(G)/ (2\Delta(G)+1)\}$.
Similarly, by recursively applying inequality \eqref{align_phi_e},
\[\Phi_V(G,k_p-i) \geq \Phi_V(G,k_p) - i\Delta(G)\geq \Phi_V(G) - \Phi_V(G)\Delta(G)/(2\Delta(G)+1)\]
for each $i \in \{1, \ldots, \Phi_V(G)/ (2\Delta(G)+1)\}$.
Therefore, there are at least
\[
\frac{\Phi_V(G) \Delta(G)}{ 2\Delta(G)+1}+1 + \frac{\Phi_V(G)}{ 2\Delta(G)+1} = \Phi_V(G)\frac{ \Delta(G) +1}{ 2\Delta(G)+1}+1
\]
contiguous values of $k$ such that 
\[\Phi_V(G,k) \geq \Phi_V(G) - \frac{\Phi_V(G)\Delta(G)}{2\Delta(G)+1} = \Phi_V(G) \frac{\Delta(G)+1}{2\Delta(G)+1}.\]
By the definition of $\hindex_V$, this yields that 
\[\hindex_V(G) \geq \Phi_V(G) \frac{\Delta(G)+1}{2\Delta(G)+1} = \frac{\Phi_V(G)}{2} \left(1 + \frac{1}{2\Delta(G)+1}\right),\]
as required. 
\end{proof} 

We note that as $\Phi_V(G) \geq \hindex_V(G)$, the vertex-$h$-index and the vertex isoperimetric peak differ by a multiplicative factor of at most a little over $2$. There is an analogous result for the edge-$h$-index, as follows. 
\begin{theorem} \label{thm:peak_to_h_E}
    For a graph $G$, 
    \[
        \hindex_E(G) \geq \frac{2}{\Delta(G) + 2} \Phi_E(G) .
    \]
\end{theorem}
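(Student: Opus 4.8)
The plan is to follow the proof of Theorem~\ref{thm:peak_to_h_V} essentially verbatim, with the edge boundary $\partial$ playing the role of the vertex boundary $\delta$. The inequality $\hindex_E(G)\le\Phi_E(G)$ is immediate from the definitions, so all the work is in the lower bound, and, exactly as before, everything hinges on a one-step stability estimate for the function $k\mapsto\Phi_E(G,k)$. The estimate I would aim to prove is
\[
\bigl|\Phi_E(G,k+1)-\Phi_E(G,k)\bigr|\le\Delta(G)\qquad\text{for all }1\le k\le |V(G)|-1.
\]

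To establish it, the key observation is that toggling a single vertex $w$ in a set $S$ only changes the boundary status of edges incident with $w$: if $e=xy$ with $x,y\neq w$, then $x$ (respectively $y$) lies in $S$ exactly when it lies in $S\cup\{w\}$ and exactly when it lies in $S\setminus\{w\}$, so $e$ is a crossing edge for $S$ iff it is a crossing edge for the modified set. Hence $\partial(S)$ and the modified boundary differ in at most $\deg(w)\le\Delta(G)$ edges. Applying this to a $k$-set $S$ realizing $\Phi_E(G,k)$ and inserting a vertex gives $\Phi_E(G,k+1)\le\Phi_E(G,k)+\Delta(G)$; applying it to a $(k+1)$-set realizing $\Phi_E(G,k+1)$ and deleting a vertex gives $\Phi_E(G,k)\le\Phi_E(G,k+1)+\Delta(G)$; together these are the estimate. (Connectivity lets one always toggle a vertex lying on the cut, trimming the cost to $\Delta(G)-2$ on each side, which gives a bit of slack, but $\Delta(G)$ suffices for the stated bound.) I would remark at this point on why the constant is weaker than in Theorem~\ref{thm:peak_to_h_V}: there, deleting a vertex puts at most one new vertex into $\delta$, producing the cheap direction in \eqref{align_phi_v}; for edges \emph{both} directions are expensive, so there is no cheap side to exploit, and the rate is $\Delta(G)$ rather than essentially $1$.

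With the estimate in hand, let $k_p$ satisfy $\Phi_E(G,k_p)=\Phi_E(G)$ and iterate in both directions to get $\Phi_E(G,k_p\pm i)\ge\Phi_E(G)-i\,\Delta(G)$ for every valid $i\ge 0$. Then $\Phi_E(G,k)\ge\frac{2}{\Delta(G)+2}\Phi_E(G)$ for every integer $k$ with $|k-k_p|\le\frac{\Phi_E(G)}{\Delta(G)+2}$, a run of about $\frac{2}{\Delta(G)+2}\Phi_E(G)+1$ consecutive integers, each carrying a $\Phi_E$-value that is at least $\frac{2}{\Delta(G)+2}\Phi_E(G)$; since the length of the run is at least this threshold, the definition of $\hindex_E$ gives the claim. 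Two bookkeeping points must be checked, exactly as in Theorem~\ref{thm:peak_to_h_V}: first, that this run actually lies inside $[1,|V(G)|-1]$, which the stability estimate itself guarantees because $\Phi_E(G)\le\Phi_E(G,1)+(k_p-1)\Delta(G)\le k_p\,\Delta(G)$ forces $k_p\ge\Phi_E(G)/\Delta(G)>\tfrac{\Phi_E(G)}{\Delta(G)+2}$ (using $\Phi_E(G,1)=\min_{v}\deg(v)\le\Delta(G)$), and symmetrically from the right end; and second, the rounding from ``a run of length $c+1$ with every value $\ge c$'' to the integer inequality $\hindex_E(G)\ge c$, handled as before, noting that since $\Phi_E$ is integer-valued ``$\ge c$'' upgrades to ``$\ge\lceil c\rceil$''.

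I expect the main obstacle to be the edge-accounting in the stability estimate: one must make sure toggling a vertex $w$ costs only $\Delta(G)$ and not $2\Delta(G)$. A careless count would separately bound the edges pulled into the boundary (those from $w$ into $S$) and the edges pushed out of it (those from $w$ out of $S$) and add them, producing $2\Delta(G)$; the resolution is that these are disjoint subsets of the $\deg(w)$ edges at $w$, so the \emph{net} change in boundary size is what matters, and on the relevant side it is at most $\deg(w)\le\Delta(G)$. Everything else is routine: the rounding above, and the degenerate regimes $\Delta(G)\le 1$ and $\Phi_E(G)$ tiny (where the asserted bound is at most $1$ while $\hindex_E(G)\ge 1$ whenever $G$ has an edge) are dispatched directly.
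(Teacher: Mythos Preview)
Your proposal is correct and follows exactly the approach the paper intends: the paper's own proof says only that it ``follows similarly to the proof of Theorem~\ref{thm:peak_to_h_V}, except using $\Phi_E(G,k+1)\ge\Phi_E(G,k)-\Delta(G)$ in place of the inequality~\eqref{align_phi_v},'' which is precisely your two-sided stability estimate $|\Phi_E(G,k+1)-\Phi_E(G,k)|\le\Delta(G)$ followed by the same windowing argument around $k_p$. Your write-up is in fact more careful than the paper's, since you explicitly verify the edge-toggling count, check that the run stays inside $[1,|V(G)|-1]$, and flag the rounding issues that the paper glosses over in both theorems.
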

\begin{proof}
    Follows similarly to the proof of Theorem~\ref{thm:peak_to_h_V}, except
    using
    \[ \Phi_E(G,k+1) \ge \Phi_E(G,k)-\Delta(G) \]
    in place of the inequality \eqref{align_phi_v}. 
\end{proof}
We have the following corollary as a consequence of Theorems~\ref{thm:h_index} and \ref{thm:peak_to_h_V}.
 
\begin{corollary}
For a graph $G$, we have that 
    \[
    \prox_1(G) = \Omega\left(\frac{\Phi_V(G)}{\Delta(G)}\right),
    \]
    and as $\Phi_V(G) \ge \Phi_E(G)/\Delta(G)$,
        \[
    \prox_1(G) = \Omega\left(\frac{\Phi_E(G)}{\Delta(G)^2}\right).
    \]  
\end{corollary}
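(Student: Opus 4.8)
The plan is to chain the two cited results and absorb the lower-order terms into the $\Omega$-notation; there is essentially no new content beyond Theorems~\ref{thm:h_index} and~\ref{thm:peak_to_h_V}. First I would recall that Theorem~\ref{thm:h_index} gives $\prox_1(G) > \hindex_V(G)/(\Delta(G)+1)$, so it suffices to bound $\hindex_V(G)$ below by a constant multiple of $\Phi_V(G)$. That is exactly the lower bound supplied by Theorem~\ref{thm:peak_to_h_V}: since $\hindex_V(G) \ge \tfrac{\Phi_V(G)}{2}\bigl(1 + \tfrac{1}{2\Delta(G)+1}\bigr) \ge \tfrac{\Phi_V(G)}{2}$, substituting yields $\prox_1(G) > \tfrac{\Phi_V(G)}{2(\Delta(G)+1)}$. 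As $\Delta(G)+1 \le 2\Delta(G)$ for every graph with $\Delta(G)\ge 1$ (and graphs with $\Delta(G)=0$ are single vertices, where the claim is vacuous), this gives $\prox_1(G) \ge \tfrac{\Phi_V(G)}{4\Delta(G)}$, which is the first displayed bound.

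For the second bound I would invoke the elementary inequality between the two isoperimetric peaks noted just before their definitions. Because $\Phi_V(G,k) \ge \Phi_E(G,k)/\Delta(G)$ holds for every $k$, choosing $k^{*}$ with $\Phi_E(G,k^{*}) = \Phi_E(G)$ gives $\Phi_V(G) \ge \Phi_V(G,k^{*}) \ge \Phi_E(G)/\Delta(G)$. Plugging this into the first bound immediately produces $\prox_1(G) = \Omega\bigl(\Phi_E(G)/\Delta(G)^2\bigr)$. (Alternatively one could chain Corollary~\ref{cor:h_index_edges} with Theorem~\ref{thm:peak_to_h_E} directly for the edge version, which gives a comparable constant; I would mention this only in passing.)

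I do not expect any genuine obstacle: the only care required is to check that the degenerate cases ($\Delta(G)\le 1$, or $\Phi_V(G)$ too small for Theorem~\ref{thm:h_index} to apply) do not invalidate the asymptotic statement, which they do not, since in those regimes the inequalities are either trivial or the conclusion is vacuous. The proof should therefore be two or three lines of algebra referencing the two theorems and the peak inequality.
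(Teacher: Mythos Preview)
Your proposal is correct and follows exactly the approach the paper indicates: the corollary is stated there simply as a consequence of Theorems~\ref{thm:h_index} and~\ref{thm:peak_to_h_V}, and you have accurately filled in the routine chaining of inequalities (including the peak comparison $\Phi_V(G) \ge \Phi_E(G)/\Delta(G)$ already noted in the text). There is nothing to add.
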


\section{Trees}\label{sec:trees}

As was proved first in \cite{seager1} and later in \cite{nisse2}, the localization number of trees is at most two. For the one-visibility Localization game, the situation is quite different. We explore bounds on $\zeta_1$ for trees and give upper and lower bounds on $\zeta_1$ for $k$-ary trees with $k\ge 2$ that differ by a multiplicative constant; this family is shown as a result to have $\zeta_1$ unbounded. 

We begin by showing that $\zeta_1$ is monotone on subtrees of a tree.

\begin{lemma}
If $T$ is a tree and $S$ is a subtree of $T$, then $\zeta_1(S) \le \zeta_1(T)$.
\end{lemma}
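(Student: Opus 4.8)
The plan is to convert a winning strategy for $k=\zeta_1(T)$ cops on $T$ into a winning strategy for at most $k$ cops on $S$, by \emph{projecting} every probe made by the $T$-strategy onto $S$. First I would record the relevant structure of a connected subgraph $S$ of a tree $T$: it is induced (any $T$-edge between two vertices of $S$ would close a cycle with the $S$-path joining them), it is geodesically convex (the unique $T$-path between two vertices of $S$ lies in $S$, so $d_T=d_S$ on $V(S)$), and for every $v\in V(T)$ there is a \emph{unique} vertex $\pi(v)\in V(S)$ at minimum distance from $v$. Moreover the $T$-path from $v$ to any $r\in V(S)$ passes through $\pi(v)$ (this follows from the median of $v,\pi(v),r$ lying in $S$ and on the $v$--$\pi(v)$ path), giving the key identity $d_T(v,r)=d_T(v,\pi(v))+d_S(\pi(v),r)$ for all $r\in V(S)$. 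These facts are all immediate from uniqueness of paths in a tree.

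Next I would set up a shadow game. The real cops play on $S$ against a robber confined to $V(S)$, and simultaneously run, internally, the winning $T$-strategy against a shadow robber which we \emph{declare} to occupy exactly the real robber's vertex $r$. Whenever the $T$-strategy calls for probes at $v_1,\dots,v_k\in V(T)$, the real cops instead probe $\pi(v_1),\dots,\pi(v_k)$ in $S$ (at most $k$ distinct vertices, so the cop count is respected). The crux is a short case analysis, using the displayed distance identity and $r\in V(S)$, showing the real cops can reconstruct the shadow probe outcome $\mathrm{probe}_T(v_i,r)$ from the real outcome $\mathrm{probe}_S(\pi(v_i),r)$: writing $\ell=d_T(v_i,\pi(v_i))$, if $\ell=0$ then $v_i=\pi(v_i)\in V(S)$ and (as $S$ is induced) the two outcomes coincide; if $\ell=1$ then $\mathrm{probe}_T(v_i,r)=1$ exactly when $\mathrm{probe}_S(\pi(v_i),r)=0$, and equals $\ast$ otherwise; and if $\ell\ge 2$ then $\mathrm{probe}_T(v_i,r)=\ast$ unconditionally. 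Thus the shadow game receives, in every round, precisely the probe results a $T$-robber sitting at $r$ would generate.

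Finally I would check faithfulness of the simulation: every robber move $r\to r'$ on $S$ is legal on $T$ (either $r'=r$ or $r'\sim_S r$, hence $r'\sim_T r$ since $S$ is induced); consequently the whole transcript fed to the $T$-strategy is exactly the transcript of some robber strategy on $T$ that stays in $S$, so the real robber's vertex is always among the candidates of the shadow game. Since the $T$-strategy defeats \emph{every} robber strategy, after finitely many rounds the shadow game is left with a single candidate, which must then be the real robber's vertex; at that moment the real cops have located the robber on $S$. Hence $\zeta_1(S)\le k=\zeta_1(T)$.

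The main obstacle is the probe-reconstruction step of the second paragraph: one must verify that collapsing a probe at $v$ to a probe at $\pi(v)$ never costs the cops information about a robber living in $S$. This is precisely where treeness is used — on a general graph neither the uniqueness of $\pi(v)$ nor the additive distance identity need hold, and the $\ell=1$ case (where a ``$0$'' in $S$ must stand in for a ``$1$'' in $T$) is the one that needs the cleanest care.
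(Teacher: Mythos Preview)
Your proof is correct and follows essentially the same approach as the paper: project each probe prescribed by the $T$-strategy onto its nearest point in $S$ (your $\pi$ is the paper's $\mathrm{des}$), and argue that the resulting $S$-probes carry at least as much information about a robber confined to $S$. The paper's version is terser---it simply notes $d_T(\mathrm{des}(v),r)\le d_T(v,r)$ and asserts ``no less information''---whereas you make the reconstruction explicit via the additive identity $d_T(v,r)=d_T(v,\pi(v))+d_S(\pi(v),r)$ and the three-case analysis on $\ell$, which is a welcome clarification.
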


\begin{proof}
For each $v \in T$, let $\textrm{des}(v) \in S$ be the unique vertex in $S$ at the shortest distance (in $T$) from $v$. Using $\zeta_1(T)$ cops, if a successful strategy on $T$ calls for a cop to probe $v \in T$, they instead probe $\textrm{des}(v)$. Since the robber cannot be in $T \setminus S$, the distance in $T$ between the robber and $\textrm{des}(v)$ is at most the distance between the robber and $v$. Thus, this strategy gives no less information than it would in $T$. As the cops win in $T$, they will win in $S$. 
\end{proof}

Our next result shows that the one-visibility localization number and one-proximity number differ by at most one for trees.

\begin{lemma}\label{lem:prox_bound}
For any tree $T$,
\[ \prox_1(T) \le \zeta_1(T) \le \prox_1(T)+1. \]
Furthermore, if $\prox_1(T) \ge \Delta(T)$, then $\zeta_1(T) = \prox_1(T)$.
\end{lemma}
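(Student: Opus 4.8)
The plan is to prove the three inequalities separately, with the first ($\prox_1(T) \le \zeta_1(T)$) being immediate from the general observation already established in Section~\ref{spr}. For the second inequality, $\zeta_1(T) \le \prox_1(T) + 1$, I would take a winning strategy for $\prox_1(T)$ cops in the one-proximity game and augment it with a single extra cop to convert a ``proximity detection'' into an actual capture. The key observation is that in a tree, once some proximity probe on a vertex $v$ returns a non-$\ast$ value, the robber lies in $N[v]$, and we need to distinguish among the at most $\deg(v)+1$ candidates. Using Lemma~\ref{lem:oneVisToSets}, I would track the contaminated set $\contaminated_t$ and note that the proximity strategy guarantees that at some round $t'$ the robber is trapped in $N[v]$ for some probed vertex $v$. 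The subtlety is that $N[v]$ could be large, so one extra cop does not obviously suffice to immediately finish. Here I would exploit the tree structure: after the proximity detection round, the contaminated set is contained in $N[v]$, and in a tree the vertices of $N[v]$ are pairwise non-adjacent except through $v$, so on the robber's move the contamination from a leaf-neighbor $u$ of $v$ can only spread to $v$ and to $N(u)$. I would argue that the cops can use the $\prox_1(T)$-cop strategy restricted to successively smaller subtrees hanging off $v$ (invoking the monotonicity lemma $\zeta_1(S) \le \zeta_1(T)$ and its proximity analogue), clearing one branch at a time while the extra cop guards $v$ to prevent the robber escaping between branches, until the robber is confined to a single branch where the proximity strategy plus the guard yields a capture.

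Alternatively, and more cleanly, I would argue as follows for the second inequality: run the $\prox_1(T)$-cop one-proximity strategy verbatim as a one-visibility strategy. Until the first non-$\ast$ probe, the one-visibility cops have exactly the same information. When a probe on $v$ returns $0$, the robber is captured. When a probe on $v$ returns $1$, place the extra cop on $v$ in every subsequent round; this ``pins'' the robber, since crossing $v$ is now forbidden, so the robber is confined to $N[v] \setminus \{v\}$ together with the component structure, i.e., to one branch $B$ rooted at a neighbor of $v$. Then recurse: on the subtree $B$ we need only $\prox_1(B) \le \prox_1(T)$ cops (using the proximity version of the monotonicity lemma, proved identically) to again detect proximity within $B$, and the same pinned cop plus new detection reduces the branch further. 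Since $B$ is strictly smaller each time, this terminates with the robber in a single edge or vertex, where detection equals capture. The total cop count is $\prox_1(T)$ for the strategy plus $1$ for the pin, giving $\zeta_1(T) \le \prox_1(T)+1$.

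For the ``Furthermore'' clause, assuming $\prox_1(T) \ge \Delta(T)$, I would show the extra cop is unnecessary. The point is that when a probe on $v$ returns $1$, the robber is on a neighbor of $v$; on the cops' immediate next move we can afford to place cops on all of $N(v) \setminus \{v\}$, which is at most $\Delta(T) \le \prox_1(T)$ vertices, in addition to re-probing $v$ itself if needed --- wait, that is $\Delta(T)$ cops for the neighbors, which already meets the budget, and we do not even need to keep the old strategy running since the game ends this round: either some cop on a neighbor $u$ probes $0$ (robber found at $u$) or, since the robber was adjacent to $v$ last round and could only have moved within $N[N[v]]$, we must account for the robber possibly having stepped away. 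So more carefully: after the round where $v$ probes $1$, the robber moves; the new contaminated set (by Lemma~\ref{lem:oneVisToSets}) is contained in $N[N[v]\setminus\{v\}] = \bigcup_{u \in N(v)} N[u]$. In a tree this set decomposes as $\{v\} \cup \bigcup_{u \in N(v)} (\{u\} \cup (N(u)\setminus\{v\}))$, and its size can exceed $\Delta(T)$. So the single-round finish does not work directly; instead I would again pin via one of the $\prox_1(T)$ cops (we have at least $\Delta(T) \ge 1$ to spare relative to... no).

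The honest approach for the ``Furthermore'' clause: after $v$ probes $1$, use $\Delta(T)$ cops --- which we have, since $\prox_1(T) \ge \Delta(T)$ --- to cover $N(v)$ on the next cop move; simultaneously one of these same cops (or the fact that $|N(v)| \le \Delta(T)$ leaves slack when $\deg(v) < \Delta(T)$, but in the worst case $\deg(v) = \Delta(T)$) must also handle escape. I would instead observe that covering $N(v)$ forces the robber, to survive, to be on a vertex of $N(v)$ that will be probed $0$ (immediate capture) or to have moved off $N(v)$ --- but the robber had to be \emph{on} $N(v)$ when $v$ probed $1$, so before the cop move the robber is in $N(v)$, and the cop move covering all of $N(v)$ finds it. The resolution of the apparent gap is timing: the probe returning $1$ happens during a cop move; the robber then moves; so we cover $N(v)$ one round \emph{late}. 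To fix this, I would have the cops, upon receiving a $1$ from $v$, devote $\deg(v) \le \Delta(T)$ cops to $N(v)$ on the \emph{very next} cop move and note the robber's pre-move location was in $N(v)$; but the robber moves before that cop move. So the correct statement is: re-probe $v$ and all of $N(v)$ using $1 + \deg(v) \le 1 + \Delta(T) \le \prox_1(T) + 1$ cops — which only recovers the $+1$ bound. To get equality when $\prox_1(T) \ge \Delta(T)$ I expect the real argument keeps the full proximity strategy running (it uses $\prox_1(T) \ge \Delta(T)$ cops) and notes that among those cops, at the moment of detection we can \emph{reallocate} $\deg(v)$ of them to $N(v)$ because the proximity strategy's next prescribed move, applied to the now-tiny contaminated set, needs far fewer than $\Delta(T)$ cops; formalizing this reallocation, using that the contaminated set after detection has bounded size and lies in a shrinking subtree, is the main obstacle, and I would handle it by the same branch-by-branch recursion as above but accounting that the guard cop is drawn from the existing $\prox_1(T) \ge \Delta(T)$ budget rather than added.

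I would therefore structure the final writeup around the branch-shrinking recursion: (1) state and prove the proximity analogue of the subtree-monotonicity lemma; (2) prove $\zeta_1(T) \le \prox_1(T)+1$ by running the proximity strategy, and upon the first non-$\ast$ probe at $v$, pinning with the $(\prox_1(T)+1)$-th cop and recursing into the unique branch containing the contamination; (3) for $\prox_1(T) \ge \Delta(T)$, observe that the branch $B$ entered satisfies $\prox_1(B) \le \prox_1(T)$ and in fact, because $v$ is pinned and the robber is confined to $B$, we only need $\prox_1(B)$ cops for $B$ and can reuse one of the surplus cops (surplus because $\prox_1(T) \ge \Delta(T)$ and at the detection moment the active strategy on $B$ needs $\le \prox_1(B) \le \prox_1(T) - 1$... ) as the pin — the delicate bookkeeping here is the part I expect to be hardest and would need to be done carefully rather than waved at.
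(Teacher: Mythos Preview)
Your plan for $\zeta_1(T)\le\prox_1(T)+1$ is essentially the paper's: dedicate one extra cop as a pin, run the $\prox_1(T)$-cop proximity strategy, and upon the first non-$\ast$ probe recurse into a strictly smaller subtree. The paper pins at a fixed root $r$ from the very first round and, when some probe returns $1$, shifts the pin to the root of the subtree of $T-r$ now known to contain the robber; you instead place the pin only after the first detection, at the detection vertex $v$. Both variants work, but your write-up has a slip: once you pin at $v$, the robber is confined to \emph{some} component of $T-v$, not a known one (the probe told you only that the robber was in $N(v)$ one robber-move ago), so your $m$ cops must sweep the components of $T-v$ in turn until a second detection names the right one before you can recurse. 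The paper's choice to pin at the root from round one sidesteps this wrinkle, since the first detection already identifies the subtree.

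The real gap is the ``Furthermore'' clause, which none of your attempts resolve (and which, incidentally, the paper's proof does not spell out either). The clean argument abandons the proximity strategy entirely after detection and uses the $\prox_1(T)\ge\Delta(T)$ cops for a direct chase. Suppose a probe at $u_0$ returns $1$, so the robber is in $N(u_0)$. On each subsequent cop move, probe all of $N(u)$ for the current vertex $u$ (initially $u=u_0$); this uses at most $\Delta(T)$ cops. In a tree any two members of $N(u)$ have $u$ as their unique common neighbour, so only three outcomes are possible: some probe returns $0$ (robber located); every probe returns $1$, forcing the robber to be at $u$ (done); or exactly one probe, at some $w\in N(u)$, returns $1$, which places the robber in $N(w)\setminus\{u\}$, and we set $u\leftarrow w$. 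One checks easily that $w$ always lies on the far side of $u$ from $u_0$, so $d(u,u_0)$ strictly increases and the chase terminates. Your attempted reallocation bookkeeping and the hoped-for inequality $\prox_1(B)\le\prox_1(T)-1$ are unnecessary and, as you suspected, do not go through; the point is simply that once detection occurs, $\Delta(T)$ cops suffice to finish outright.
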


\begin{proof}
Let $m = \prox_1(T)$. Root $T$ arbitrarily and let $r$ be the root vertex. The cops of the one-visibility Localization game use $m$ cops to follow a winning strategy in the one-proximity game and place one additional cop at $r$ on each round. 
If the robber ever tries to cross from one subtree of $T- r$ to another, then they must pass through $r$, at which point the cop on $r$ would probe a distance of $0$, and the robber would be captured. Therefore, the robber may only move in one subtree. 

As the cops' strategy succeeds in the one-proximity game, they eventually probe a vertex and receive a distance of at most one. If the probe returns distance zero, the robber has been located, so assume it returns one. This uniquely determines on which subtree the robber is located. 
If the cop on $r$ also returned a probe of distance $1$ to the robber, then the robber would be captured as $r$ has exactly one neighbor on each subtree. Therefore, we may assume the robber has distance at least $2$ from $r$.  
The cop dedicated to probing $r$ can now probe the root of the robber's subtree while the other $m$ cops start the winning strategy from the beginning. After repeating this process at most $d$ times, where $d$ is the depth of $T$, the subtree onto which the robber is forced will be a leaf. The cops can then win by probing each leaf until locating the robber.
\end{proof}

Let $T$ be a tree. Note that given any $v \in T$, $T - v$ is a forest. Call $v$ a \emph{midway vertex} of $T$ if each component $T_1, T_2, \ldots, T_k$ of $T - v$ satisfies $|V(T_i)| \le n/2$. The following result is folklore.
\begin{lemma}
Every tree has at least one midway vertex.
\end{lemma}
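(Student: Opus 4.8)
The plan is to show that every tree $T$ on $n$ vertices has a vertex $v$ such that every component of $T - v$ has at most $n/2$ vertices. The natural approach is a \emph{centroid} argument: among all vertices of $T$, pick one that minimizes the size of the largest component of $T - v$, and argue that this minimum is at most $n/2$ by contradiction.

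First I would fix notation: for $v \in V(T)$, let $c(v)$ denote the maximum of $|V(T_i)|$ over the components $T_1, \dots, T_k$ of $T - v$, and choose $v$ to minimize $c(v)$ over all vertices. Suppose for contradiction that $c(v) > n/2$, so some component $T_1$ of $T - v$ has $|V(T_1)| > n/2$. Let $w$ be the neighbor of $v$ lying in $T_1$; I would then compare $c(w)$ with $c(v)$. When we delete $w$ from $T$, one of the resulting components is exactly the set $V(T) \setminus V(T_1)$ together with $v$ (the part "on the $v$ side"), which has $n - |V(T_1)| < n/2$ vertices; every other component of $T - w$ is a component of $T_1 - w$, hence is a subset of $V(T_1) \setminus \{w\}$ and so has at most $|V(T_1)| - 1 < |V(T_1)|$ vertices.

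Putting these together gives $c(w) \le \max\{\, n - |V(T_1)|,\ |V(T_1)| - 1 \,\} < |V(T_1)| = c(v)$, contradicting the minimality of $c(v)$. Hence $c(v) \le n/2$, and $v$ is a midway vertex. I would phrase the bound on the components of $T-w$ carefully, since that is the only place any real care is needed: one must verify that the components of $T - w$ are precisely $\{V(T) \setminus V(T_1) \cup \{v\}\}$ and the components of $T_1 - w$, which follows because $w$ separates $v$ from the rest of $T_1$ in the tree.

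The main obstacle, such as it is, is purely bookkeeping: correctly identifying the components of $T - w$ and checking that each is strictly smaller than $|V(T_1)|$ when $|V(T_1)| > n/2$. There is no genuine difficulty here — this is a standard finite-descent argument on a tree — so the write-up should be short.
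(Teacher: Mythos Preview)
Your proposal is correct and is essentially the same centroid argument the paper gives: pick $v$ minimizing the largest component of $T-v$, assume this is $>n/2$, and move to the neighbor $w$ in the oversized component to get a strictly smaller maximum, contradicting minimality. One tiny cosmetic point: $v$ already lies in $V(T)\setminus V(T_1)$, so the ``together with $v$'' is redundant (your count $n-|V(T_1)|$ is right as stated).
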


\begin{proof}
Given $u \in V(T)$, let $T_1, T_2, \ldots, T_k$ be the components of $T-u$ and define $$s(u) = \max_{1\le i \le k} |V(T_i)|.$$ 
Assume for the sake of contradiction that $\min_{u \in V(T)} s(u) > n/2$. 
Let $v$ be a vertex with $s(v)$ minimal and let $T_1, T_2, \ldots, T_k$ be the components of $T - v$, where we have $s(v)>n/2$ by our initial assumption. Note that at most one component satisfies $|V(T_i)| > n/2$ as $\sum |V(T_i)| = n-1$; without loss of generality, let this component be $T_1$. 

Let $w \in T_1$ be the neighbor of $v$ in $T$. We then have that $T-w$ is a collection of components, say $S_1, S_2,\ldots, S_r$. The largest of these components, say $S_1$, cannot be a subtree of $T_1$ as any such subset does not contain $w$ and therefore has fewer than $|V(T_1)|$ vertices, contradicting that $s(v)$ was minimal. However, any $S_i$ intersecting $T_1$ must, in fact, be a subtree of $T_1$ as the only path from elements in $T_1$ to elements in $T \graphminus T_1$ contains $w$. 

Thus, the largest component $S_1$ must be the subtree formed by combining the subtrees $T_2, \ldots, T_k$ and $v$, and so we have that $|V(S_1)| \le n-|V(T_1)|$. As $|V(T_1)| > n/2$, we have that
\[ |V(S_1)| \le n-|V(T_1)| < n-n/2 = n/2, \]
which contradicts that $s(w) \ge s(v)$.
\end{proof}

We now derive the following bound in terms of the order of the tree.  
\begin{theorem}\label{bn}
If $T$ is a tree of order $n \ge 2$, then $\zeta_1(T) \le \lceil \log_2 n \rceil.$
\end{theorem}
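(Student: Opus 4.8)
The plan is to use the existence of a midway vertex together with the subtree monotonicity of $\zeta_1$ to set up an induction on $n$. The base case $n = 2$ is immediate: a single cop probing either vertex wins, so $\zeta_1(T) = 1 = \lceil \log_2 2 \rceil$. For the inductive step, let $T$ be a tree of order $n \ge 3$, let $v$ be a midway vertex of $T$, and let $T_1, \dots, T_k$ be the components of $T - v$, each of order at most $n/2$.

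The strategy for $\lceil \log_2 n \rceil$ cops is as follows. Dedicate one cop to probe $v$ in every round. As in the proof of Lemma~\ref{lem:prox_bound}, this cop guarantees that the robber can never move between two distinct components $T_i$, and that if the robber is ever adjacent to $v$ it is captured (since $v$ has exactly one neighbor in each $T_i$, a probe of $1$ at $v$ together with no probe of $0$ elsewhere pins the robber down). So after sufficiently many rounds, the cops may assume the robber is confined to a single $T_i$ at distance at least $2$ from $v$. Now the remaining $\lceil \log_2 n \rceil - 1$ cops sweep the components one at a time: since each $T_i$ has order at most $n/2$, we have $\zeta_1(T_i) \le \lceil \log_2(n/2) \rceil = \lceil \log_2 n \rceil - 1$ by the inductive hypothesis (and by the subtree-monotonicity lemma, if one prefers to phrase it via a subtree of $T$), so these cops have a winning strategy on $T_i$. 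Run that strategy on $T_1$; if the robber is caught, done; otherwise it was never in $T_1$, and since the cop on $v$ prevents it from entering $T_1$, that component is fully cleared. Repeat on $T_2$, then $T_3$, and so on. The robber is confined to fewer and fewer components, and since the dedicated cop blocks all passage through $v$, it is eventually cornered in the last remaining component and captured there.

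One technical point to handle carefully: I should check that $\lceil \log_2(n/2) \rceil = \lceil \log_2 n \rceil - 1$ holds for all integers $n \ge 3$ (it does, since $\lceil \log_2 n \rceil \ge 2$ in that range and $\log_2(n/2) = \log_2 n - 1$), and that restarting the sweeping strategy on each successive $T_i$ is legitimate — the information state is simply "robber is somewhere in $T_{i} \cup \dots \cup T_k$ at distance $\ge 2$ from $v$," and clearing $T_i$ only shrinks this. A second point is the "distance at least $2$ from $v$" reduction: one must argue, exactly as in Lemma~\ref{lem:prox_bound}, that we may delay the component-sweep until the cop on $v$ has had the chance to register a probe, and that the robber cannot indefinitely sit adjacent to $v$ without eventually being forced to move or be probed.

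The main obstacle is not conceptual but bookkeeping: making precise that confining the robber to one component and then reapplying the inductive strategy composes cleanly, i.e.\ that the "$+1$" cop on $v$ genuinely renders each cleared $T_i$ fully cleared so that the sweeps can be done sequentially rather than simultaneously. Once that is nailed down, the cop count is exactly $1 + (\lceil \log_2 n \rceil - 1) = \lceil \log_2 n \rceil$, completing the induction.
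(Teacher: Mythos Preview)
Your approach is exactly the paper's: induct on $n$, station one cop permanently on a midway vertex $v$ so the robber cannot cross between components of $T-v$, and use the remaining $\lceil \log_2 n\rceil - 1$ cops to sweep those components one by one via the inductive hypothesis. The side claim that a probe of $1$ at $v$ already captures the robber is incorrect when $v$ has several neighbors, and the ``distance at least $2$ from $v$'' reduction you flag is unnecessary; neither affects the argument, since the inductive sweep of each $T_i$ locates the robber wherever it sits in $T_i$, including on the neighbor of $v$.
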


\begin{proof}
The proof is by induction on $n$. The base case with $n=2$ is straightforward: the only tree on two vertices is an edge, for which $\log_2 2 = 1$ probe suffices.

Now assume $T$ is a tree on $n$ vertices. Let $x$ be a midway vertex of $T$ and probe $x$ every round. This prevents the robber from moving from one component of $T-x$ to another. Each component of $T-x$ contains at most $n/2$ vertices and thus, by induction, requires at most $\log_2(n/2) = \log_2 n - 1$ probes to search. As the robber is restricted to a single component of $T-x$, the cops can use these $\log_2 n - 1$ probes to clear each component of $T-x$ before moving on to the next. The winning cop strategy we outlined uses at most $1+\log_2 n - 1 = \log_2 n$ probes, and the proof follows.
\end{proof}

The \emph{depth} of a vertex in a rooted tree is the number of edges in a shortest path from the vertex to the tree's root. The \emph{depth} of a rooted tree $T$ is the greatest depth in $T.$ The \emph{depth} of a tree is the smallest depth of a rooted tree over all ways of rooting $T.$
We next turn to two bounds in terms of the depth of a tree.

\begin{theorem} \label{thm:kAryTree_Upper}
For a tree $T$ of depth $d$, we have that
\[ \zeta_1(T) \le \left\lfloor \frac{d}{4} \right\rfloor + 2. \]
\end{theorem}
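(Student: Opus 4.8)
The goal is to bound $\zeta_1(T) \le \lfloor d/4 \rfloor + 2$ for a tree of depth $d$. Root $T$ at a vertex achieving depth $d$. The natural approach is to use a small "escort" of cops who guard a vertex near the root together with an inductive/recursive strategy that forces the robber into a subtree whose depth has dropped by roughly $4$. Concretely, I would look for a vertex $v$ at depth close to $d/2$ (or a carefully chosen midway-type vertex) whose removal splits $T$ into the component containing the root (call it the ``upper'' part) and the subtrees hanging below $v$. Keeping one cop permanently on $v$ confines the robber to one side. If the robber is trapped above $v$, that region has depth roughly $d/2$, so recursion on depth works; the subtlety is that I want the depth parameter to decrease additively by $4$ each time I spend two extra cops, not multiplicatively.

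So the cleaner route, and the one I'd actually pursue, is a direct recursive strategy where a \emph{constant} number (two) of extra cops handle the top four levels of the tree and then hand off to a recursive strategy on a forest of depth $d-4$. Here is the scheme. Using Lemma~\ref{lem:prox_bound}, it suffices (up to an additive constant absorbed into the ``$+2$'') to play the one-proximity game, which by Lemma~\ref{lem:oneVisToSets} is a one-player contamination game. I would place cops on the root $r$ and on the (at most $\Delta$) vertices at depth $1$... but that is too many. Instead: pick a single child-direction. The robber, to stay uncaptured while we work on depth-$\le 4$ behaviour, must reside at depth $\ge $ something; by probing $r$ every round we confine the robber to one subtree rooted at a depth-$1$ vertex $r_1$; recursively probing $r_1$ confines to a depth-$2$ vertex, and so on. Each such ``peeling'' costs one cop held in place, and after holding four cops we have confined the robber to a subtree of depth $d-4$ while also having used those same four held cops (which we can recycle once confinement at the next level is established, since a cop on a deeper ancestor subsumes the guarding role of a shallower one — this is exactly the recycling argument in the proof of Lemma~\ref{lem:prox_bound}). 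The key accounting: at any moment we hold at most a bounded number of guarding cops because guarding the deepest reached ancestor suffices, and the ``active'' search of the four exposed levels can be done with a small team that probes level-by-level (as in the pathwidth proof, Theorem~\ref{pw}, where a path-decomposition of a depth-$4$ segment has bounded width).

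**Main steps, in order.** (1) Reduce to the one-proximity game via Lemma~\ref{lem:prox_bound}, so we only need to bound $\prox_1(T)$ by $\lfloor d/4\rfloor + 1$, say, and lose one more cop in the translation. (2) Set up the recursion: claim $\prox_1$ of a depth-$d$ rooted forest is at most $\lfloor d/4 \rfloor + c$ for a small constant $c$, base case $d \le 3$ handled by placing cops on a bounded set (a depth-$\le 3$ tree rooted appropriately, or use the pathwidth bound since such trees can have small pathwidth — actually depth-$3$ trees can have large pathwidth, so the base case needs care: handle $d\le 3$ by a direct small-cop argument using that the robber is confined near the top). (3) Inductive step: show how to spend exactly $2$ extra cops to strip $4$ levels. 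The $2$ cops do two jobs simultaneously: one ``guard cop'' that sits on the current deepest confirmed ancestor of the robber (confining the robber to a single subtree), and one ``scout cop'' that, together with re-using the guard cop between confinement upgrades, sweeps through the four newly exposed levels to either capture the robber or push confinement down by one level; repeat four times, then recurse with the freed cops on the depth-$(d-4)$ subtree. (4) Verify the contamination never recovers: once a level is cleared and the guard sits below it, Lemma~\ref{lem:oneVisToSets} shows no recontamination, so the process terminates. (5) Add up: $\lfloor d/4\rfloor$ rounds of the recursion, $2$ cops each but re-used across recursion depth so the total is $2 + (\text{base constant})$ plus the $\lceil \log \rceil$-free handling — wait, I must make sure the cop \emph{count} (not just rounds) is $\lfloor d/4\rfloor + 2$, which forces the $2$ extra cops per level to be \emph{reused}, i.e. the recursion is not "cops accumulate" but "cops relay downward."

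**The main obstacle.** The delicate point is the cop-\emph{count} bookkeeping: a naive recursion would give $\zeta_1(T) \le 2\cdot(\text{recursion depth}) = 2\lfloor d/4\rfloor$, not $\lfloor d/4\rfloor + 2$. To get the additive bound I must argue that the guarding cops at all $\lfloor d/4\rfloor$ levels are not needed \emph{simultaneously}: once the robber is confined to the subtree rooted at a depth-$4$ vertex $w$, a single cop guarding $w$ renders all shallower guards unnecessary (any earlier-level transit must pass through $w$). So at most a constant number of guard cops are active at once, and the ``scout'' work on the four exposed levels must be shown to be doable with one extra cop plus temporary use of the about-to-be-freed guards. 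Making the level-by-level sweep of a depth-$4$ band work with so few cops — essentially showing the relevant band has bounded ``one-visibility search cost'' when the robber enters it from a single known vertex — is where the real work lies, and it's plausible the authors instead prove a slightly different, cleverer routing that handles $4$ levels at a time precisely because $4$ is the smallest band width for which a $2$-cop (plus relay) sweep provably succeeds; I would try small cases ($d=4,5,6,7,8$) by hand first to nail down the exact band strategy before committing to the induction.
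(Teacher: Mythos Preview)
Your proposal has a genuine gap at the heart of the recursion. Probing the root $r$ every round does \emph{not} confine the robber to a single depth-$1$ subtree in any way that helps your cop count. In the contamination formulation of Lemma~\ref{lem:oneVisToSets}, a cop on $r$ only keeps $N[r]$ clear; \emph{every} subtree hanging off $r$ remains contaminated and must eventually be cleared. You never learn ``which'' subtree, so you cannot descend to a single $r_1$, then a single $r_2$, etc. The recursion you actually get from guarding $r$ and clearing subtrees one at a time is $\prox_1(T) \le 1 + \prox_1(\text{depth-}(d{-}1)\text{ subtree})$, and the guard on $r$ cannot be released until all but one subtree is fully cleared, so this unrolls to a bound linear in $d$, not $d/4$. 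The ``relay'' idea fails for the same reason: you only free a shallow guard once everything off the path above the new guard is cleared, and that clearing is itself a full-depth problem. Likewise, ``stripping four levels with two cops'' cannot work directly: the top four levels of a high-arity tree contain unboundedly many vertices, and pushing contamination below level $4$ requires visiting the neighbourhood of each of them.

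The paper's argument is structurally quite different and explains where the $4$ really comes from. It proves $\prox_1(T) \le \lfloor d/4\rfloor + 1$ and then applies Lemma~\ref{lem:prox_bound}. The $m = \lfloor d/4\rfloor + 1$ cops are deployed \emph{simultaneously along a single root-to-leaf path} $P_i = u_0 u_1 \cdots u_q$, spaced every four vertices: in one round they occupy $u_0, u_4, u_8, \ldots$, and in the next round $u_2, u_6, u_{10}, \ldots$. These two rounds clear $P_i$ completely. The cops then process root-to-leaf paths in the order given by a depth-first search on the leaves; the DFS order guarantees that once the subtree below the branch vertex $v_i$ is cleared it stays cleared, since any path from a still-infected subtree to it passes through the current path $P_{i+1}$. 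So the factor $1/4$ is the \emph{spacing of cops along a path of length at most $d$}, not a level-stripping rate; there is no recursion on depth and no relay of guard cops.
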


\begin{proof}
We provide a strategy using $m=\lfloor \frac{d}{4} \rfloor + 1$ cops to win the one-proximity game on $T$. The result then follows from Lemma~\ref{lem:prox_bound}. The idea of the proof is to clear paths sequentially, based on an ordering of the leaves. We clear all the paths to leaves from lower to higher index, using two cop moves to clear a given path. We ensure that the robber cannot reinfect previously infected paths to leaves with a lower index in each round.

Let $u_0$ be the root vertex of $T$,  and let $\ell_1, \ldots, \ell_p$ denote an ordering of the leaves of $T$, where the ordering is obtained by performing a depth-first search on $T$. Let $i$ be the smallest index such that $\ell_i$ has not yet been chosen.  Let $P_i=u_0u_1, \ldots, u_q=\ell_i$ denote the path from the root to $\ell_i$. Define $v_i$ as the vertex in $P_i$ that is not in $P_{i+1}$ but is as close to the root $u_0$ as possible. 

By induction, we assume that an even number of rounds have occurred, and it is immediately before the cops' move on a round of odd parity. Further, we assume that each subtree in the forest $T - P_i$ either has:  
\begin{enumerate}
\item all vertices cleared; or 
\item all vertices infected, except perhaps the unique vertex with a neighbor in $P_i$ (within the graph $T$).
\end{enumerate}
We call these subtrees \emph{cleared} and \emph{infected}, respectively. The base step for induction is follows as, on the first round, $T - P_1$ is composed of infected trees.  
We note that under these assumptions and since the $\ell_i$ were defined using a depth-first search, directly before the cops take their move, the only infected descendants of $v_i$ are in $P_i$. 

Recall that $q\leq d$ is the length of the path $P_i$. 
The cop player places a cop on the vertex $u_{4j}$ for each $0 \leq j \leq \lfloor q/4\rfloor$ using at most $\lfloor q/4\rfloor + 1 \le m$ cops. If the robber was on $P_i$ and was not captured in this move, then they must have been on $u_{4j+2}$ for some $0 \leq j \leq \lfloor (q-2)/4\rfloor$ or else on $u_q=\ell_i$ if $q \equiv 3 \ (\text{mod } 4)$. Therefore, after the robber moves, they are on either an infected subtree of $T-P_i$ or on $N[u_{4j+2}]$ for some $0 \leq j \leq \lfloor (q-2)/4\rfloor$. On the next cop move, a cop is placed on vertex $u_{4j+2}$ for each $0 \leq j \leq \lfloor (q-2)/4\rfloor$ using at most $\lfloor (q-2)/4\rfloor + 1 \le m$ cops. 
The robber may now only be on an infected subtree and is not on $P_i$ nor on a cleared tree.  
As we noted before these two rounds, the descendants of $v_i$ were infected only if they were in $P_i$. Consequently, each descendant of $v_i$ (including $v_i$ itself) is cleared after these two moves. The robber then takes their move, and after this move may be on an infected tree or on a vertex in $V(P_i) \cap V(P_{i+1})$. 

We observe that the descendants of $v_i$ (including $v_i$ itself) form a cleared tree of $T- P_{i+1}$, which we label as $S$. 
In addition, a cleared tree in $T- P_{i}$ is either a cleared tree in $T - P_{i+1}$ or is a subtree of $S$. 
Similarly, we can show that each subtree of $T - P_{i+1}$ that is not one of these cleared trees is an infected tree. 
Therefore, we have that an even number of rounds has occurred, it is now the cops' move on an odd-parity round, and each subtree of $T - P_{i+1}$ is either cleared or infected, completing the inductive step. 
\end{proof}

We provide a third and final upper bound for $\zeta_1$ on trees. An example will follow, illustrating three graphs such that each bound is best on exactly one graph. 

Let $T$ be a tree, rooted at vertex $v$, of depth $d$. Let $\mathcal{L}_v = \{ L_1, L_2, \dots, L_d\}$ be a \emph{level decomposition} of $T$ rooted at $v$, where $L_i = \{u \in V(T) : d(u,v) = i\}$. Define the nonnegative integer $\overline{L_i} = |\{w \in L_i : \deg(w) \ge 2\}|,$ which counts the number of non-leaf vertices within each level. We have the following upper bound on $\prox_1(T)$.

\begin{theorem}\label{levt}
If $\mathcal{L}_v$ is the level decomposition of $T$ rooted at $v$, then
\[ \prox_1(T) \le \left\lceil \frac{\max_{i} \{\overline{L_i}\}}{3} \right\rceil + 1 .\]
\end{theorem}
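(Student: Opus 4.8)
The plan is to argue directly in the one-proximity game — the statement concerns $\prox_1$, so Lemma~\ref{lem:prox_bound} is not needed — and to invoke Lemma~\ref{lem:oneVisToSets}, so that it suffices to exhibit a sequence $V_1,V_2,\dots$ of probe-sets, each of size at most $\lceil M/3\rceil+1$ where $M=\max_i\{\overline{L_i}\}$, that shrinks the contaminated set $\contaminated_t$ to a single vertex. I would sweep the level decomposition $\mathcal{L}_v=\{L_1,\dots,L_d\}$ from the root downward, pushing the robber toward the leaves, in the spirit of the proofs of Theorems~\ref{bn} and \ref{thm:kAryTree_Upper}. One cop is kept as a \emph{guard} on a vertex separating the already-cleared top of the tree from the region currently being worked; the remaining $\lceil M/3\rceil$ cops clear the active level.

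The heart of the argument is an invariant in the style of the proof of Theorem~\ref{thm:kAryTree_Upper}. Immediately before a round of a fixed parity there is a frontier level $L_j$ such that every subtree of $T$ rooted at a vertex of $L_{j+1}$ is either wholly cleared or wholly contaminated except possibly its single vertex of $L_{j+1}$, and the robber is confined to $L_1\cup\dots\cup L_j$ together with the contaminated hanging subtrees. To move the frontier down one level, the clearing cops run through the non-leaf vertices of $L_{j+1}$ in groups of three, spending a bounded number of rounds per group: a probe on a non-leaf $w$ reports on all of $N[w]$, namely $w$, $\mathrm{parent}(w)$ in $L_j$, and $\mathrm{children}(w)$ in $L_{j+2}$, so after the cops have cycled through every non-leaf of $L_{j+1}$ they have in effect swept three consecutive levels; this spillover is what replaces the naive count $\overline{L_{j+1}}\le M$ by $\lceil\overline{L_{j+1}}/3\rceil\le\lceil M/3\rceil$ cops. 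Throughout, the guard cop sits on the vertex of the current frontier lying on the path from the root to the active subtree, so that while a particular group is idle the robber cannot use an unoccupied non-leaf of $L_{j+1}$ to reinfect a subtree that has already been declared cleared. Iterating drives the frontier past $L_d$, trapping the robber among leaves, where it is caught in one or two further rounds.

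The step I expect to be the real obstacle is precisely this bookkeeping: one must check that $\lceil M/3\rceil$ clearing cops, cycling three non-leaves at a time, simultaneously (i) keep pace with the unit-speed spread of contamination so that the frontier actually advances, and (ii) never let a cleared subtree become recontaminated during the rounds a given triple is idle. As in Theorem~\ref{thm:kAryTree_Upper}, this amounts to a careful induction on the number of processed levels: after each round of a cycle one tracks exactly which vertices of the three active levels $L_j$, $L_{j+1}$, $L_{j+2}$ are cleared and re-derives the invariant, and it is the level ordering of the sweep together with the placement of the guard that make this close. The base cases — trees of small depth, where $\lceil M/3\rceil+1$ cops simply occupy a small dominating-type set — are routine, and the bound on the total number of cops used is immediate from the construction.
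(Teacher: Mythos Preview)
Your overall shape is in the right family—partition the non-leaves of a level into groups of three, use one extra cop to push the frontier—but the key mechanism behind the factor $3$ is misidentified, and this leads the rest of the sketch astray.

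The division by $3$ is \emph{not} a spatial fact about $N[w]$ meeting three consecutive levels. It is temporal: a single cop cycling through three vertices $x,y,z$ of a fixed level, probing each once every three rounds, maintains a barrier the robber cannot cross. After a probe of $x$ the set $N[x]$ is cleared, and the contamination needs exactly three robber moves to reach the children of $x$ again, at which point the next probe of $x$ clears them. Thus $\lceil M/3\rceil$ cops, each cycling through its own triple \emph{in parallel}, hold an entire level of at most $M$ non-leaves indefinitely. Your three-level-spillover heuristic does not explain why $\lceil M/3\rceil$ cops can complete a pass before the robber undoes it; taken at face value it still requires all $\le M$ probes in one round.

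Because the mechanism is different, so is the role of the extra cop. In the paper it is a \emph{roving} helper $C^*$ that visits one triple at a time and, over several rounds, lets that triple's dedicated cop replace $\{x,y,z\}$ by their parents $\{x_1,y_1,z_1\}$ without the robber slipping through; it then moves on to the next triple. A single static guard on one frontier vertex, as you propose, cannot protect the remaining $\le M-1$ unattended non-leaves of the level from letting the robber across.

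Finally, the paper sweeps from the deepest level \emph{up} toward the root, not root-down; your own invariant (``the robber is confined to $L_1\cup\dots\cup L_j$ together with the contaminated hanging subtrees'') actually describes an upward sweep and contradicts your stated direction. Either direction can likely be made to work, but you need to commit to one and rebuild the bookkeeping around the $3$-round cycling barrier, with the extra cop doing the sequential shifting—not around level spillover.
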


\begin{proof}
Let $k=\left\lceil \frac{\max_{i}\{\overline{L_i}\}}{3} \right\rceil$. We give a strategy for $k$ + 1 one-proximity cops to clear $T$ starting from level $L_{m-1}$ and working up to the root $v$. Divide the non-leaf vertices of $L_{m-1}$ into disjoint groups of three vertices, assigning one cop to each set, and consider the first such group consisting of vertices $x$, $y$, and $z$. For $x$, let $x_1$ denote the parent of $x$, $x_2$ denote the parent of $x_1$, and define $y_1, y_2, z_1$ and $z_2$ analogously. We refer to the cop initially assigned to these three vertices as $C_1$. At the start of the game, the robber may be anywhere on $T$, so all vertices start contaminated. 

In the first three rounds, $C_1$ will probe $x$, then $y$, then $z$. After these probes, immediately before the robber's move, the robber may be at $x$ (had they begun on $x_2$, they could move to $x_1$ prior to the cop probing $y$, and then to $x$ prior to the cop probing $z$). The robber could also be at $y_1$ or $z_2$. We note that the robber cannot currently be on a leaf adjacent to $x$, $y$, or $z$ without being previously detected by one of the first three probes. See Figure~\ref{L_m-1}.

\begin{figure}[ht!]
\centering
\begin{tikzpicture}
\node[fill=red, circle, label=west:$x$] (x) at (1,1) {};
\node[draw, fill=white, circle] (x1) at (0,0) {};
\node[draw, fill=white, circle] (x2) at (0.5,0) {};
\node at (1.25,0) {$\cdots$};
\node[draw, fill=white, circle] (x3) at (2,0) {};
\node[fill=red, circle, label=west:$x_1$] (Px) at (1,2) {};
\node[fill=red, circle, label=west:$x_2$] (PPx) at (1,3) {};
\draw (PPx) -- (Px) -- (x) -- (x1);
\draw (x) -- (x2);
\draw (x) -- (x3);

\node[draw, fill=white, circle, label=west:$y$] (y) at (4,1) {};
\node[draw, fill=white, circle] (y1) at (3,0) {};
\node[draw, fill=white, circle] (y2) at (3.5,0) {};
\node at (4.25,0) {$\cdots$};
\node[draw, fill=white, circle] (y3) at (5,0) {};
\node[fill=red, circle, label=west:$y_1$] (Py) at (4,2) {};
\node[fill=red, circle, label=west:$y_2$] (PPy) at (4,3) {};
\draw (PPy) -- (Py) -- (y) -- (y1);
\draw (y) -- (y2);
\draw (y) -- (y3);

\node[draw, fill=white, circle, label=west:$z$] (z) at (7,1) {};
\node[draw, fill=white, circle] (z1) at (6,0) {};
\node[draw, fill=white, circle] (z2) at (6.5,0) {};
\node at (7.25,0) {$\cdots$};
\node[draw, fill=white, circle] (z3) at (8,0) {};
\node[draw, fill=white, circle, label=west:$z_1$] (Pz) at (7,2) {};
\node[fill=red, circle, label=west:$z_2$] (PPz) at (7,3) {};
\draw (PPz) -- (Pz) -- (z) -- (z1);
\draw (z) -- (z2);
\draw (z) -- (z3);

\node at (-1,0) {$L_m$};
\node at (-1,1) {$L_{m-1}$};
\node at (-1,2) {$L_{m-2}$};
\node at (-1,3) {$L_{m-3}$};
\end{tikzpicture}
\caption{Possible robber locations are shown in red after $C_1$'s third probe.}\label{L_m-1}
\end{figure}
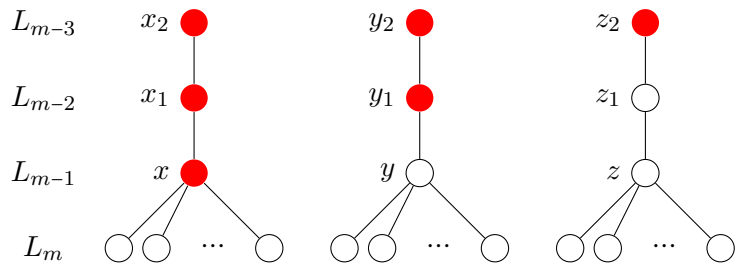

The robber now takes their move. To keep the leaves adjacent to $x$ clear, $C_1$ will next probe $x$. They could then probe $y$, and then $z$, repeating the previous three probes. This would protect the leaves adjacent to $x$, $y$, and $z$, but would not allow the cop player to make progress. We now introduce our additional cop, $C^*$, who will help $C_1$ shift from guarding $x,y$ and $z$ to guarding $x_1, y_1$, and $z_1$. To do this, $C^*$ probes $x$ instead of $C_1$. Had the robber moved to $x$ they will be detected by this next probe regardless of where they now move. This allows $C_1$'s next three probes to be $x_1$, then $y$, then $z$, while $C^*$ continues to probe $x$. Following these three rounds of cop probes, $x$ is fully cleared. 

In the next three rounds we will have $C^*$ probe $y$, while $C_1$ probes $x_1$, then $y_1$, then $z$. Next, $C^*$ probes $z$ while $C_1$ probes $x_1$, then $y_1$, then $z_1$. On the robber's move following this sequence of probes, they may now be at $x_1$, $y_2$, or $z_3$. Next, $C^*$ will move to the second trio of non-leaf vertices in $L_{m-1}$ and will perform this same strategy with the cop there, $C_2$. At the same time, $C_1$ will repeatedly probe $x_1$, then $y_1$, then $z_1$, ensuring that the robber can only reach $L_{m-1}$ in this part of the graph, and will be detected doing so, thereby clearing $x$, $y$, and $z$, and the adjacent leaves in $L_m$. This is continued for each group of three non-leaf vertices in $L_{m-1}$, of which we have at most $k$. 

When $C^*$ has concluded with the last group of three non-leaves in $L_{m-1}$, the cops will probe in $L_{m-1}$ so that every non-leaf vertex there is probed every three rounds. This ensures that the robber will be captured if they ever move onto a non-leaf in $L_{m-1}$.

The clearing strategy continues up the tree, one level at a time. When the cops move to $L_{m-2}$, we note that we may have non-leaf vertices in $L_{m-2}$ which have yet to be probed (those that have all their children as leaves in $L_{m-1}$). The tree $T$ has at most $k$ sets of three non-leaves in any level. For such vertices, divide them into groups of three and assign an unused cop to each. We have these cops repeatedly probe each of their three vertices (as $C_1$ probed $x$, $y$, then $z$ initially), clearing their adjacent leaves in $L_{m-1}$.

The cop $C^*$ is then used to extend the cop territory up from $L_{m-2}$ into $L_{m-3}$. First, $C^*$ probes $x_1$ while $C_1$ probes $x_2$, then $y_1$, then $z_1$. The process is repeated as in the level below. In each grouping of vertices, the cop $C^*$ extends the cop territory up the tree, one vertex at a time. Since each vertex in a level is probed every three terms while $C^*$ is probing that level, the robber cannot move down the tree into cop territory without being detected.

In this way, $C^*$ can extend the cop territory eventually up to the root $v$, cleaning the tree and winning the one-proximity game. The proof follows.
\end{proof}

For integers $d,k \ge 2,$ we use the notation $T^k_d$ for the $k$-ary tree of depth $d,$ where each non-leaf vertex has $k$ children. We note that the bound in Theorem~\ref{levt} is tight for $T^3_2$. 
Rooted at the midway vertex, the tree has $\max_i \{\overline{L_i} \}=|L_1| = 3$, and so Theorem~\ref{levt} provides that $\prox_1(T^3_2) \leq 2$. It is known~\cite{seager2} that the original localization number satisfies $\zeta(T^3_2)=2$, so we have $\zeta_1(T^3_2)\geq 2$ as well.

 To compare the upper bounds on $\zeta_1(T)$ provided by Theorems~\ref{bn}, \ref{thm:kAryTree_Upper}, and \ref{levt}, let $T_0=T^3_3$, which has 40 vertices and 39 edges. The corresponding upper bounds for $T_0$ are displayed in the first row of Table~\ref{tab1_new}. Define the trees $T_i$ to be $T_0$ with each edge subdivided into $i$ vertices. We then have that $T_i$ will have depth $3(i+1)$ and order $40+39i$. The best upper bound for each tree is shown in bold. Note that each of the theorems gives the best bound depending on the tree considered.
\begin{center}
\begin{table}[h]
\begin{tabular}{|c|c|c|c|}
\hline
          & Theorem~\ref{bn}             & Theorem~\ref{thm:kAryTree_Upper}   & Theorem~\ref{levt} \\ \hline
$T_0$     & $6$           & \textbf{2}      & 4         \\ \hline
$T_{10}$     & \textbf{9}    & 10           & 10         \\ \hline
$T_{100}$ & $12$        & 77          & \textbf{10}        \\ \hline
\end{tabular}
\caption{Upper bounds for $\zeta_1$ from Theorems~\ref{bn}, \ref{thm:kAryTree_Upper}, and \ref{levt}.}\label{tab1_new}
\end{table}
\end{center} 

Finding lower bounds for the one-visibility localization number is challenging in most cases.  We determined that the isoperimetric peak can give a lower bound on $\prox_1$, enabling us to utilize such isoperimetric results when they exist. We finish by applying such results to binary trees, where $k=2$. We cite the following result, which gives asymptotically tight values for the isoperimetric peak of binary trees. 

\begin{theorem}[\cite{HY2015}]\label{tbdd}
If $d\ge 2$ is an integer, then
\[ \frac{d}{2} -O(\log{d}) \le  \Phi_E(T^2_d) \le \frac{d}{2} +O(1) .
\]
\end{theorem}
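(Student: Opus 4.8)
Here $T^2_d$ denotes the complete binary tree with $N := 2^{d+1}-1$ vertices, and $\Phi_E(T^2_d) = \max_k \Phi_E(T^2_d,k)$.

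\medskip

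The plan is to translate the edge-isoperimetric problem on $T^2_d$ into a purely number-theoretic question about writing integers as short signed sums of \emph{binary repunits} $R_j := 2^j-1$, which are exactly the sizes of the rooted subtrees of $T^2_d$, and then to do the number theory. The starting observation is that if $S\subseteq V(T^2_d)$ and we delete the $b:=|\partial(S)|$ boundary edges of $S$, the tree falls into $b+1$ connected pieces, each of which is a rooted subtree with some rooted sub-subtrees removed. Counting carefully, each deleted edge $e$ contributes exactly one term $\pm R_{j(e)}$ to $|S|$ — the sign recording which endpoint of $e$ lies in $S$, and $j(e)\in\{1,\dots,d\}$ since the lower endpoint has depth $\ge 1$ — while the piece containing the root contributes $+R_{d+1}$ precisely when the root lies in $S$. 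Hence $|S|$ is either a signed sum of at most $b$ repunits $R_j$ with $j\le d$, or $R_{d+1}$ plus such a sum. Writing $w(m)$ for the minimum number of signed repunits summing to $m$, this gives the key lemma: if $\Phi_E(T^2_d,k)=b$ then $\min\bigl(w(k),\,w(N-k)\bigr)\le b$, and conversely a realizable representation of length $b$ yields a set with boundary $\le b$. The theorem then follows from matching bounds on $w$.

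\medskip

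For the lower bound I take $k^\ast := \lfloor 2^{d+1}/3\rfloor$, whose binary expansion is the alternating string $0101\ldots01$ of length $d+1$; its bitwise complement $N-k^\ast$ is again alternating. The claim is $w(k^\ast)\ge d/2-O(\log d)$ and likewise for $N-k^\ast$, which with the correspondence gives $\Phi_E(T^2_d)\ge\Phi_E(T^2_d,k^\ast)\ge d/2-O(\log d)$. To bound $w$ from below I pass to signed binary (non-adjacent) form: if $m=\sum_{i=1}^t \pm R_{j_i}$ then $m+c=\sum\pm 2^{j_i}$ for some integer $c$ with $|c|\le t$, so the signed-binary weight satisfies $w_\pm(m+c)\le t$. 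Now $k^\ast$ is already in NAF, so $w_\pm(k^\ast)=\Theta(d)$, in fact $\lceil (d+1)/2\rceil$; and since $k^\ast$ has no run of two consecutive $1$s, adding any $c$ with $|c|\le t=O(d)$ cannot start a cascading carry and alters only $O(\log d)$ low-order bits, whence $w_\pm(k^\ast+c)\ge \lceil (d+1)/2\rceil-O(\log d)$. This forces $t\ge \lceil(d+1)/2\rceil-O(\log d)$, and the $O(\log d)$ slack in the theorem is exactly this carry term.

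\medskip

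For the upper bound I must show $\Phi_E(T^2_d,k)\le d/2+O(1)$ for \emph{every} $k$ with $0\le k\le N$, which by the correspondence reduces to producing, for each such $k$, a signed-repunit representation of length $\le \lceil(d+1)/2\rceil+O(1)$ that is moreover \emph{geometrically realizable}: the positive repunit terms are vertex-disjoint rooted subtrees, the negative ones are rooted subtrees carved out of larger included subtrees, and one may optionally start from all of $T^2_d$ (the $R_{d+1}$ term) or, equivalently, represent $N-k$ instead. For the representation I would use all-positive repunits: choosing $t$ and a multiset of exponents so that $\sum 2^{j_i}=k+t$ with exactly $t$ powers of two amounts to making the monotone function $t\mapsto t-s_2(k+t)$ reach $0$, and the point is that repeated carries collapse roughly half the bits of $k+t$, so this happens already at $t\le \lceil(d+1)/2\rceil+O(1)$ (e.g.\ $341=R_8+R_6+R_4+R_3+R_1$). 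The $t$ prescribed full subtrees are then placed greedily inside $T^2_d$, each contributing a single boundary edge, so $|\partial(S)|\le t\le d/2+O(1)$; since $k$ was arbitrary, $\Phi_E(T^2_d)\le d/2+O(1)$.

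\medskip

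The main obstacle is this last number-theoretic fact: the worst-case signed-repunit weight of an integer below $2^{d+1}$ is only $\approx (d+1)/2$, not $\approx d+1$. The naive bound — one maximal run of $1$s in the binary expansion costs two repunits, and there are at most $\lceil(d+1)/2\rceil$ runs — only yields $\le d+1$; squeezing this down to $d/2+O(1)$ requires choosing the repunit lengths so that the accumulated $-1$'s telescope, which is the crux lemma, together with the bookkeeping needed to lay the resulting subtrees out disjointly (or to pass to the complement). The piece-counting correspondence, the NAF-based lower bound, and the final assembly are then routine.
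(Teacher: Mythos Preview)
First, note that the paper does not prove this statement: it is quoted verbatim from Hrube\v{s}--Yehudayoff and used only as a black box to feed into Corollary~\ref{tboundd}. There is therefore no ``paper's own proof'' to compare against; I can only assess your sketch on its merits.

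Your correspondence between $|\partial(S)|$ and signed-repunit representations of $|S|$ (or of $N-|S|$) is correct, and the lower-bound half is essentially sound: for $k^{\ast}=\lfloor 2^{d+1}/3\rfloor$ the carry-propagation analysis in the alternating bit pattern really does give $w_{\pm}(k^{\ast}+c)\ge d/2-O(\log d)$ whenever $|c|\le d$, and this forces $w(k^{\ast})\ge d/2-O(\log d)$ (and likewise for $N-k^{\ast}$).

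The upper bound, however, has a genuine gap --- and you flag it yourself as ``the crux lemma'' without actually proving it. You reduce to finding, for every $k\le N$, some $t\le (d+1)/2+O(1)$ with $s_2(k+t)\le t$. But the monotonicity you invoke gives only $f(t+2)-f(t)\ge 1$ for $f(t)=t-s_2(k+t)$, so $f$ reaches $0$ at $t\approx 2s_2(k)$; even after replacing $k$ by $\min(k,N-k)$ this is $d+O(1)$, not $d/2+O(1)$. Concretely, if $k$ has binary expansion $1^{(d+1)/2}0^{(d+1)/2}$ then $s_2(k+t)=(d+1)/2+s_2(t)$ for every $t<2^{(d+1)/2}$, so $s_2(k+t)\le t$ forces $t-s_2(t)\ge (d+1)/2$, whose least solution is $(d+1)/2+\Theta(\log d)$; for this particular $k$ the complement is trivial, but the example shows the ``carries collapse half the bits'' heuristic is not itself a proof, and you give no argument that passing to $N-k$ always recovers the $O(1)$. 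Even the signed-repunit route via NAF plus recursion yields only $d/2+O(\log d)$ without a further idea. The geometric realizability (packing the chosen subtrees disjointly into $T^2_d$) is also deferred; it is routine when the $j_i$ are distinct but needs a word when they repeat.
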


We therefore have that $\hindex_E(T^2_d)\geq \frac{d}{5}-O(\log{d})$ from Theorem~\ref{thm:peak_to_h_E}, and so we have the following bounds. 
\begin{corollary}\label{tboundd}
If $d\ge 2$ is an integer, then

\[
\frac{d}{60} - O(\log{d}) < \prox_1(T^2_d)  \le \zeta_1(T^2_d) \le  \frac{d}{4}+2 .
\]
\end{corollary}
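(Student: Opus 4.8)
The plan is to assemble Corollary~\ref{tboundd} as a direct consequence of several results already in hand, so the work is essentially bookkeeping of the constants. The right-hand inequality $\zeta_1(T^2_d) \le \frac{d}{4}+2$ is immediate: a $2$-ary tree of depth $d$ is in particular a tree of depth $d$, so Theorem~\ref{thm:kAryTree_Upper} applies and gives $\zeta_1(T^2_d) \le \lfloor d/4 \rfloor + 2 \le \frac{d}{4}+2$. The middle inequality $\prox_1(T^2_d) \le \zeta_1(T^2_d)$ is the trivial general bound noted in Section~\ref{spr}.

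For the lower bound, I would chain the estimates as follows. Start from the edge-isoperimetric peak bound of Theorem~\ref{tbdd}, which gives $\Phi_E(T^2_d) \ge \frac{d}{2} - O(\log d)$. Feed this into Theorem~\ref{thm:peak_to_h_E}: since $T^2_d$ has maximum degree $\Delta = 3$ (each internal vertex has two children plus a parent), we get
\[
\hindex_E(T^2_d) \;\ge\; \frac{2}{\Delta+2}\,\Phi_E(T^2_d) \;=\; \frac{2}{5}\,\Phi_E(T^2_d) \;\ge\; \frac{d}{5} - O(\log d),
\]
which is exactly the intermediate claim made in the text just before the corollary. Then apply Corollary~\ref{cor:h_index_edges} with $\Delta = 3$:
\[
\prox_1(T^2_d) \;>\; \frac{\hindex_E(T^2_d)}{(\Delta+1)\Delta} \;=\; \frac{\hindex_E(T^2_d)}{12} \;\ge\; \frac{1}{12}\left(\frac{d}{5} - O(\log d)\right) \;=\; \frac{d}{60} - O(\log d).
\]
Combining the three pieces yields the stated chain $\frac{d}{60} - O(\log d) < \prox_1(T^2_d) \le \zeta_1(T^2_d) \le \frac{d}{4}+2$.

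The only genuinely delicate point is the handling of the $O(\log d)$ error term through the two multiplicative steps: one must check that multiplying the additive $O(\log d)$ error by the fixed constants $2/5$ and then $1/12$ (and absorbing the effect of the floor in $\hindex_E$, which costs at most an additive constant) still leaves an $O(\log d)$ error — which it does, since the constants are absolute. I would also remark that the maximum degree $\Delta(T^2_d)=3$ holds for all $d \ge 2$ (for $d=1$ the tree is a star, a degenerate case the corollary excludes), so the constants $2/5$, $12$, and $60$ are uniform in $d$. No step presents a real obstacle; the content of the corollary lies entirely in Theorems~\ref{tbdd}, \ref{thm:peak_to_h_E}, \ref{thm:kAryTree_Upper} and Corollary~\ref{cor:h_index_edges}, which are invoked as black boxes.
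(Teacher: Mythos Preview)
Your proposal is correct and matches the paper's own proof essentially verbatim: the paper also derives the upper bound from Theorem~\ref{thm:kAryTree_Upper} and the lower bound by chaining Theorem~\ref{tbdd}, Theorem~\ref{thm:peak_to_h_E}, and Corollary~\ref{cor:h_index_edges} with $\Delta=3$, yielding the constants $5$, $12$, and hence $60$. Your explicit tracking of $\Delta(T^2_d)=3$ and the $O(\log d)$ error through the multiplicative steps is more detailed than the paper's two-sentence proof, but the route is identical.
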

\begin{proof} The upper bound follows from Theorem~\ref{thm:kAryTree_Upper}. The lower bound follows from using Corollary~\ref{cor:h_index_edges}, then applying Theorem~\ref{thm:peak_to_h_E} to the result, and then finally using the lower bound of Theorem~\ref{tbdd}. \end{proof}

 Similar lower bounds of the vertex isoperimetric peak on $k$-ary trees are also useful to us here. 

\begin{theorem}[\cite{iso-peak-trees-Vrto-2010}]\label{thm:k_tree_vertex_isop_lower}
If $d,k\ge 2$ are integers, then 
\[ \Phi_V(T^k_d) \geq \frac{3}{40} (d-2).
\]
\end{theorem}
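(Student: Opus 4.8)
The plan is to reduce the peak lower bound to a single, carefully chosen cardinality and then argue combinatorially. Since $\Phi_V(T^k_d) = \max_m \Phi_V(T^k_d, m)$, it suffices to exhibit one integer $m = m(d,k)$ for which \emph{every} subset $S \subseteq V(T^k_d)$ with $|S| = m$ satisfies $|\delta(S)| \ge \tfrac{3}{40}(d-2)$; then $\Phi_V(T^k_d) \ge \Phi_V(T^k_d, m)$ finishes it. I would take $m$ to be ``generic'': a fixed fraction of $|V(T^k_d)|$ bounded away from $0$ and $1$ and, crucially, not (even approximately) expressible as a short signed combination of the subtree sizes $n_j = (k^{j+1}-1)/(k-1)$ occurring in $T^k_d$. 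This is exactly what rules out the cheap extremal sets — a small union of pairwise non-adjacent subtrees, or the complement of one, all of which have boundary $O(1)$. The cases of small $d$ are trivial, as the claimed bound is then nonpositive; and one notes that $k \ge 2$ is genuinely needed, since $T^1_d$ is a path with $\Phi_V = 1$.

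For the core argument, root $T^k_d$ at $r$, and for $S$ with $|S|=m$ let $\sigma(v)$ be the number of $S$-vertices inside the subtree $T_v$ rooted at $v$, with $n_v = |V(T_v)|$, so that $\sigma(r)=m$, $\sigma(v)\in\{0,1\}$ at leaves, and $\sigma(v) = [v\in S] + \sum_c \sigma(c)$ over the children $c$ of $v$. Call $v$ \emph{balanced} if $\alpha n_v \le \sigma(v) \le (1-\alpha)n_v$ for a suitable small constant $\alpha$. By the choice of $m$ the root is balanced and no leaf is, so along every root--leaf path the balanced property fails somewhere; and an averaging argument over the $k$ children of a balanced vertex (the $n_c$ shrink by a factor about $1/k$, so the $S$-mass cannot remain proportionally balanced in every child without producing a ``switch'') forces, at $\Omega(d)$ distinct levels, a parent--child pair straddling $S$ — i.e.\ a boundary vertex of $S$. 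A careful accounting shows that a constant fraction of these boundary vertices are distinct, and tracking the constants coming from $\alpha$ and the branching is what produces the explicit factor $\tfrac{3}{40}$; the bound is $k$-independent precisely because the geometric shrinking of subtree sizes occurs no matter how large $k$ is.

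The main obstacle is controlling the minimization over \emph{all} $m$-subsets at once: a balanced vertex can distribute its $S$-mass among its $k$ children in many different ways, so the ``one boundary vertex per level'' claim must be made robust to that. The standard device, which I would follow along the lines of \cite{iso-peak-trees-Vrto-2010}, is a compression (shifting) argument: repeatedly replace $S$ by a combinatorially simpler set of the same cardinality without increasing $|\delta(S)|$ — moving $S$-mass to the leftmost children at each node, and rounding each chosen child's contribution toward a full canonical subtree — until $S$ reaches a normal form in which the boundary count is transparent, and then checking that for the chosen $m$ this normal form still costs $\Omega(d)$. The two delicate points are that each compression step is non-increasing in $|\delta(S)|$ and that the target cardinality $m$ genuinely resists all such simplifications; once these are in place the final count, and the constant $\tfrac{3}{40}$, drop out. (For $k=2$ alone one can shortcut the whole argument: $\Phi_V \ge \Phi_E/\Delta$ together with Theorem~\ref{tbdd} already yields $\Phi_V(T^2_d) \ge d/6 - O(\log d)$, which beats $\tfrac{3}{40}(d-2)$ for large $d$; but this does not cover $k \ge 3$, which is why the general argument above is needed.)
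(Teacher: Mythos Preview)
This theorem is not proved in the paper at all: it is quoted verbatim from \cite{iso-peak-trees-Vrto-2010} and used as a black box to feed into Theorem~\ref{thm:peak_to_h_V} and Theorem~\ref{thm:h_index} in the derivation of Corollary~\ref{cor:zeta_kAry_upper}. There is therefore no ``paper's own proof'' against which to compare your proposal; the authors simply cite Vrt'o's result and move on.

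As for your sketch on its own merits, it is a plausible outline but not a proof. The crucial steps --- that a ``generic'' $m$ exists which defeats all short signed combinations of subtree sizes, that the compression moves you describe never increase $|\delta(S)|$, and that the normal form forces $\Omega(d)$ distinct boundary vertices with the specific constant $\tfrac{3}{40}$ --- are each asserted rather than carried out, and you yourself flag them as the ``delicate points.'' In particular, the claim that balancedness along every root--leaf path produces a boundary vertex at $\Omega(d)$ \emph{distinct} levels needs a real argument: a single boundary vertex can serve many root--leaf paths, and the ``careful accounting'' you allude to is exactly where the work lies. If you want to supply a proof here rather than a citation, you would need to either reproduce Vrt'o's argument in full or make each of these steps precise; as written, the proposal is a roadmap, not a proof.
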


Theorem~\ref{thm:k_tree_vertex_isop_lower} provides the following bounds on $\prox_1$ and $\zeta_1$ for $k$-ary trees.
\begin{corollary} \label{cor:zeta_kAry_upper}
If $d,k\ge 2$ are integers, then
\[
\frac{3}{80} (d-2) \left( \frac{2}{2k+3} \right) < 
 \prox_1(T^k_d) \leq \zeta_1(T^k_d) \leq \frac{d}{4}+2.
\]
\end{corollary}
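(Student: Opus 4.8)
The plan is to chain together the isoperimetric machinery of Section~\ref{sec:gen_bounds} exactly as in the proof of Corollary~\ref{tboundd}, but routed through the \emph{vertex} parameters rather than the edge parameters, since Theorem~\ref{thm:k_tree_vertex_isop_lower} is stated for $\Phi_V$. The upper bound is immediate: $T^k_d$ is a tree of depth $d$, so Theorem~\ref{thm:kAryTree_Upper} gives $\zeta_1(T^k_d) \le \lfloor d/4 \rfloor + 2 \le d/4 + 2$, and the middle inequality $\prox_1(T^k_d) \le \zeta_1(T^k_d)$ is the general observation recorded in Section~\ref{spr}.

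For the lower bound, first note that $\Delta(T^k_d) = k+1$: every internal non-root vertex has $k$ children and one parent, and since $k \ge 2$ this dominates the root's degree $k$ and the leaves' degree $1$. Now apply the three earlier bounds in sequence. Theorem~\ref{thm:h_index} gives $\prox_1(T^k_d) > \hindex_V(T^k_d)/(\Delta(T^k_d)+1) = \hindex_V(T^k_d)/(k+2)$. Theorem~\ref{thm:peak_to_h_V} gives $\hindex_V(T^k_d) \ge \frac{\Phi_V(T^k_d)}{2}\left(1 + \frac{1}{2\Delta(T^k_d)+1}\right) = \Phi_V(T^k_d)\cdot \frac{k+2}{2k+3}$, using the simplification $\frac{1}{2}\left(1 + \frac{1}{2(k+1)+1}\right) = \frac{k+2}{2k+3}$. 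Finally Theorem~\ref{thm:k_tree_vertex_isop_lower} gives $\Phi_V(T^k_d) \ge \frac{3}{40}(d-2)$.

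Substituting the second inequality into the first cancels the factor $k+2$, leaving $\prox_1(T^k_d) > \Phi_V(T^k_d)/(2k+3)$; substituting the third then yields $\prox_1(T^k_d) > \frac{3}{40}(d-2)\cdot\frac{1}{2k+3} = \frac{3}{80}(d-2)\cdot\frac{2}{2k+3}$, which is exactly the claimed lower bound. When $d = 2$ the bound reads $0 < \prox_1(T^k_d)$, which holds trivially since $\prox_1$ is a positive integer.

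I expect no genuine obstacle: all the content lives in the cited theorems, so the only care required is the bookkeeping — verifying $\Delta(T^k_d) = k+1$ and checking the algebraic identity $\frac{1}{2}\left(1 + \frac{1}{2(k+1)+1}\right) = \frac{k+2}{2k+3}$ so that the $(k+2)$ factors cancel cleanly and the constants line up with the stated form $\frac{3}{80}(d-2)\cdot\frac{2}{2k+3}$.
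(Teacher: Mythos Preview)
Your proof is correct and follows exactly the same route as the paper: Theorem~\ref{thm:kAryTree_Upper} for the upper bound, and the chain Theorem~\ref{thm:h_index} $\to$ Theorem~\ref{thm:peak_to_h_V} $\to$ Theorem~\ref{thm:k_tree_vertex_isop_lower} for the lower bound. Your explicit verification that $\Delta(T^k_d)=k+1$ and that the $(k+2)$ factors cancel to yield $\frac{3}{80}(d-2)\cdot\frac{2}{2k+3}$ is more detailed than the paper's own one-line justification.
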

\begin{proof}
The upper bound follows from Theorem~\ref{thm:kAryTree_Upper}.
The lower bound follows from using Theorem~\ref{thm:h_index}, then applying Theorem~\ref{thm:peak_to_h_V}, and then finally using Theorem~\ref{thm:k_tree_vertex_isop_lower}. 
\end{proof}

Corollaries~\ref{tboundd} and ~\ref{cor:zeta_kAry_upper} provide families of trees with unbounded $\zeta_1$ number, in stark contrast to $\zeta$ being bounded by 2 for trees. The result of Corollary~\ref{cor:zeta_kAry_upper} is far from tight. We note that improvements to the isoperimetric value of $k$-ary trees would improve this result. 

\section{Cartesian grid graphs} \label{sec:grids}

We proved in Theorem~\ref{plan} that for planar graphs of order $n,$ $\zeta_1(G) \le O(\sqrt{n})$. In this section, we show that grid graphs make this bound tight, in the sense that such graphs have $\zeta_1$ numbers in $\sqrt{|V(G)|}+O(1)$. 

For a positive integer $n$, let $G_{n,n}$ be the $n \times n$ \emph{Cartesian grid}, which consists of the Cartesian product of the $n$-order path with itself, or $P_n \square P_n.$ As we only consider Cartesian grids, we refer to them as \emph{grids}. 

The lower bound for grids follows by using our earlier results with the $h$-index. 

\begin{theorem}\label{thm:grid_lower}
For a positive integer $n>1$, $\prox_1(G_{n,n}) \geq \frac{n}{5}+1$.
\end{theorem}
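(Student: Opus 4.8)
The plan is to invoke Theorem~\ref{thm:h_index}, which gives $\prox_1(G_{n,n}) > \hindex_V(G_{n,n})/(\Delta(G_{n,n})+1)$, together with the lower bound on $\hindex_V$ from the isoperimetric peak in Theorem~\ref{thm:peak_to_h_V}. Since $\Delta(G_{n,n}) = 4$, Theorem~\ref{thm:peak_to_h_V} gives $\hindex_V(G_{n,n}) \ge \frac{\Phi_V(G_{n,n})}{2}\left(1 + \frac{1}{9}\right) = \frac{5}{9}\Phi_V(G_{n,n})$, and Theorem~\ref{thm:h_index} then yields $\prox_1(G_{n,n}) > \frac{1}{5}\cdot\frac{5}{9}\Phi_V(G_{n,n}) = \frac{1}{9}\Phi_V(G_{n,n})$. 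So it suffices to show $\Phi_V(G_{n,n})$ is at least roughly $\frac{9n}{5}$; more precisely, to land the stated bound $\prox_1 \ge \frac{n}{5}+1$ cleanly we would want a direct argument rather than chaining the lossy constants, so I expect the actual proof to work with the contamination-spread dynamics of Lemma~\ref{lem:oneVisToSets} directly on the grid.

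The cleaner route, which I would carry out, is to argue directly as in the proof of Theorem~\ref{thm:h_index} but using the explicit vertex-isoperimetry of the grid. First I would establish the key isoperimetric fact: for the $n\times n$ grid and any vertex set $S$ with $n \le |S| \le n^2 - n$, we have $|\delta(S)| \ge$ (something close to) $n$ — this follows because a set of such intermediate size must have a "wide" cross-section in at least one of the two axis directions, forcing its vertex boundary to meet many rows or columns. A convenient packaging: for each of the $n$ rows, if the row contains both a vertex of $S$ and a vertex outside $S$ (or is entirely one color while a neighboring row is mixed), it contributes to $\delta(S)$; a counting argument over rows and columns shows the boundary has size $\Omega(n)$ across the whole middle range of cardinalities. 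This gives $\Phi_V(G_{n,n}, k) \ge c n$ for all $k$ in an interval of length $\ge n^2 - 2n$, which is far longer than $cn$, so $\hindex_V(G_{n,n}) \ge cn$ with the right constant.

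Then I would run the contamination argument verbatim: with $p = \lfloor \frac{n}{5}\rfloor$ cops, each round clears at most $5p \le n$ vertices (since $\Delta+1 = 5$), while whenever the number of contaminated vertices lies in the long interval where $\Phi_V \ge$ roughly $n$, the contamination regrows by at least that much; hence the cops can never push the contamination count down through this interval, so they can never finish. Tracking the constants carefully — clearing exactly $n$ per round when $p = n/5$, and needing $\Phi_V(G_{n,n},k) > n$ rather than $\ge n$ for the strict regrowth, plus the "+1" — is what pins down the precise threshold $\frac{n}{5}+1$.

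The main obstacle is the isoperimetric estimate: I need $|\delta(S)| \gtrsim n$ for *every* set $S$ whose cardinality sits in a window of length $\gg n$, with a good enough constant to yield the exact statement. The delicate case is sets $S$ that are "thin" in one direction — long and narrow — where one has to check the boundary is still large; the row/column double-counting handles this, since being thin in one direction forces being long in the other, which then forces many boundary vertices. Getting the exact constant (to hit $\frac{n}{5}+1$ rather than just $\Omega(n)$) will require being careful about whether we count $\delta(S) = N[S]\setminus S$ and about the off-by-one in the $h$-index definition, so I would set up the row/column argument to give a bound of the form $|\delta(S)| \ge n$ (or $n+1$) directly and feed that into Theorem~\ref{thm:h_index}'s proof scheme rather than its statement.
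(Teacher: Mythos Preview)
Your overall approach matches the paper's: feed a lower bound on $\hindex_V(G_{n,n})$ into Theorem~\ref{thm:h_index} with $\Delta(G_{n,n})+1=5$. The difference is in how you obtain the isoperimetric input. The paper does not prove any grid isoperimetry from scratch; it simply cites the Bollob\'as--Leader result \cite{grids-BL-1991}, which gives the \emph{exact} values of $\Phi_V(G_{n,n},k)$ and in particular shows $\Phi_V(G_{n,n},k)=n$ for all $k$ in the range $\big[\tfrac{n^2-3n+4}{2},\,\tfrac{n^2+n-2}{2}\big]$, an interval of $2n-2$ consecutive integers. Since $2n-2\ge n$, this gives $\hindex_V(G_{n,n})\ge n$ immediately, and Theorem~\ref{thm:h_index} then yields $\prox_1(G_{n,n})>n/5$.

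Your plan to re-derive the isoperimetric bound via a row/column counting argument is in principle workable, but it is unnecessary extra effort, and the sketch you give is too loose to be sure you would land on the sharp constant (you need $\hindex_V\ge n$, not just $\Omega(n)$, and your proposed window $n\le|S|\le n^2-n$ with a claimed bound ``close to $n$'' is not quite what the actual isoperimetric profile looks like). You were right that chaining Theorem~\ref{thm:peak_to_h_V} loses a factor you cannot afford; the fix is not to redo the contamination argument or the isoperimetry, but to observe that the known exact isoperimetric values already give $\hindex_V\ge n$ directly, bypassing Theorem~\ref{thm:peak_to_h_V} entirely.
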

\begin{proof}
The vertex isoperimetric values are known for the grids \cite{grids-BL-1991}. In particular, for $G_{n,n}$, $\Phi_V(G_{n,n},k)=n$ for $k\in \{\frac{n^2 - 3n + 4}{2}, \ldots, \frac{n^2+n-2}{2} \}$, which are $2n-2$ contiguous values of $k$. 
Thus, it follows that $\hindex_V(G_{n,n}) \geq n$, and by Theorem~\ref{thm:h_index} we have that $\prox_1(G_{n,n}) > n/5$, as required. 
\end{proof}

We next establish upper bounds for grid graphs that differ from the lower bound in Theorem~\ref{thm:grid_lower} by an additive constant.

\begin{theorem} \label{thm:grid_upper}
Let $m$ be the odd integer such that $n=5m-i$ for some integer $0\le i \le 9$. 
We then have that
\[\prox_1(G_{n,n}) \leq m+3.\]
\end{theorem}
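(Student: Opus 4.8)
The plan is to exhibit an explicit one-proximity strategy for $m+3$ cops on $G_{n,n}$, following the same ``sweeping a frontier'' philosophy used in the tree results, but adapted to the two-dimensional grid. Since $n = 5m-i$ with $m$ odd and $0\le i\le 9$, we have $n/5 \le m \le (n+9)/5$, so the claim is that roughly $n/5$ cops—matching the lower bound of Theorem~\ref{thm:grid_lower} up to an additive constant—suffice. By Lemma~\ref{lem:oneVisToSets} we may work entirely with the contamination sets $\contaminated_t$: the cops clear $N[v]$ for each probed vertex $v$, then the contamination spreads by one vertex along its boundary. The goal is to maintain a cleared region that is a ``half-grid'' (all columns up to some index, say), push its frontier one column to the right every constant number of rounds, and never let the contamination leak back across the frontier.

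First I would set up the frontier invariant: after a block of rounds, the cleared set consists of all of columns $1,\dots,c$, and the cops must be positioned on (or able to return to) a vertical barrier that prevents recontamination of column $c$. The key geometric fact is that a single column of $G_{n,n}$ has $n$ vertices, but a cop on a vertex clears its closed neighbourhood, which in the interior is a plus-shape of $5$ vertices intersecting three consecutive columns; stacking $\lceil n/5\rceil$-ish cops in a staggered pattern across two or three adjacent columns lets a constant number of cops form a moving barrier that the contamination cannot cross in one spreading step. I would make this precise by describing, say, a ``brick-wall'' placement: in odd rounds the cops occupy vertices at rows $\equiv 0 \pmod 5$ in column $c+1$ and at rows $\equiv 2,3\pmod 5$ (or similar) in columns $c$ and $c+2$, and in even rounds they shift by a fixed amount, so that over two rounds every vertex of column $c+1$ lies in some probed $N[v]$. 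This is exactly analogous to the three-probe cycles $x,y,z$ in the proof of Theorem~\ref{levt}: a short periodic probing pattern keeps a whole ``level'' (here, a column) guarded, after which one extra cop $C^*$ can be peeled off to extend the barrier one column further right.

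The main steps, in order, would be: (1) fix the barrier pattern and verify, by a direct check of residues mod $5$, that the chosen staggered placement over a constant number of consecutive rounds clears every vertex of the current frontier column and of the next column, using at most $m+3$ cops (the ``$+3$'' absorbing the boundary rows, the rounding in $\lceil n/5\rceil$ versus $m$, and the one mobile cop $C^*$); (2) show the contamination invariant is preserved—if columns $1,\dots,c$ are cleared and the barrier is maintained, the spreading step can only recontaminate vertices of column $c+1$ that are adjacent to still-contaminated column-$(c+2)$ vertices, all of which get re-cleared on the next barrier cycle; (3) describe the ``shift'' phase in which $C^*$ moves the barrier from columns $\{c,c+1,c+2\}$ to $\{c+1,c+2,c+3\}$ while the remaining cops hold the line, mirroring how $C^*$ extends cop territory one vertex at a time in Theorem~\ref{levt}; (4) initialize (column $1$ is a single path, easily cleared) and terminate (when $c=n$ the whole grid is cleared, and since this is the one-proximity game the final probe that sees the robber at distance $\le 1$ ends the game).

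The hard part will be step (1): pinning down a placement that simultaneously (a) uses no more than $m+3$ cops including boundary effects and the mobile cop, and (b) genuinely covers every vertex of the frontier column across its short period, including the two rows near the top and bottom of the grid where the plus-shaped neighbourhoods degenerate. Getting the arithmetic of $n = 5m-i$ to line up with a clean residue-class pattern—so that the ``$+3$'' really suffices for all ten values of $i$ and both parities forced by $m$ odd—is where the bookkeeping concentrates; the contamination-containment arguments in steps (2)–(3) are then routine consequences of Lemma~\ref{lem:oneVisToSets} once the pattern is fixed.
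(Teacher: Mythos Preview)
Your proposal has a genuine gap at step~(2), and it is not just bookkeeping: a \emph{vertical} barrier of full height~$n$ cannot be held by $m+3\approx n/5$ cops, so the invariant ``columns $1,\dots,c$ are cleared'' cannot be maintained while the remaining columns are fully contaminated.

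Here is the obstruction concretely. Suppose columns $c+3,\dots,n$ are contaminated and the cops probe a set $V_t$ with $|V_t|=m+3$. After the spreading step, every vertex of column $c+2$ is contaminated, since each is adjacent to column $c+3$. For your invariant to survive the next cycle, the cops must in particular clear all $n$ vertices of column $c+2$ within that cycle before the contamination advances another column. But a single cop at $(r,c+2)$ clears only the three vertices $(r-1,c+2),(r,c+2),(r+1,c+2)$ of that column, and a cop in column $c+1$ or $c+3$ clears exactly one. Hence $m+3$ cops clear at most $3(m+3)<3n/5+9$ vertices of column $c+2$ in one round; at least $2n/5-9$ vertices of column $c+2$ remain contaminated, and on the next spreading step these recontaminate the corresponding vertices of column $c+1$. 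Iterating, the contamination walks left through any constant-thickness vertical strip regardless of how you stagger the probes, because the rightmost column of the strip is re-flooded in full every round from its contaminated right neighbour. Your appeal to the three-probe cycle of Theorem~\ref{levt} does not transfer: in a tree each vertex has a unique path to the contaminated region, so cycling through $x,y,z$ blocks all access; in the grid, the vertex $(r,c+1)$ is recontaminated from $(r,c+2)$ independently of what happens at other rows, so periodic probing at spacing~$5$ along the column leaves permanent gaps.

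The paper's proof avoids this by two coupled ideas that your outline is missing. First, the frontier is \emph{diagonal}, not vertical: the forced region $F_{i,j}$ is bounded below by a staircase of slope roughly~$1/2$, and the probe set $S_{i,j}$ consists of knight-move-spaced vertices whose closed neighbourhoods tile a diagonal band with essentially no overlap, so that almost every probe clears five genuinely new vertices (this is what lets $n/5$ cops suffice rather than $n/3$). Second, the grid is split into five vertical strips $A_1,\dots,A_5$ each of width~$m$; a single team of $(m+3)/2$ cops sweeps one strip on rounds $\equiv 0\pmod 5$ and another on rounds $\equiv 3\pmod 5$, so two teams of $(m+3)/2$ cops cover all five strips on a rotating schedule. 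The diagonal offsets between adjacent strips are chosen so the forced regions interlock at the seams, and a substantial case analysis (Tables~\ref{tab:cop_placements_part2} and~\ref{tab:cops_on_border_patrol}) verifies the robber cannot slip from the forced region of one strip into the already-cleared portion of its neighbour. None of this machinery is present in your plan, and the vertical-barrier framework cannot be rescued by adjusting residues: the $n/5$ bound is only attainable with a frontier geometry in which each cop's plus-shape lies almost entirely in contaminated territory, which forces the diagonal orientation.
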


We prove Theorem~\ref{thm:grid_upper} at the end of this section. As a consequence of Theorem~\ref{thm:grid_upper}, we know $\prox_1(G_{n,n})$ up to one of four values. This gives a class of graphs where Theorem~\ref{thm:h_index} is close to being tight. 

\begin{corollary} \label{cor:prox_grids_both}
For $n$ a positive integer, 
\[ 
\Big\lceil\frac{n}{5}
\Big\rceil+1 \leq \prox_1(G_{n,n}) 
\leq 
\Big\lceil\frac{n}{5}
\Big\rceil+4.\]
\end{corollary}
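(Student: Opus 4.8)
The plan is to obtain the corollary directly from Theorem~\ref{thm:grid_lower} and Theorem~\ref{thm:grid_upper}, using only that $\prox_1(G_{n,n})$ is a positive integer to sharpen the real-valued bounds stated there.

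For the lower bound, Theorem~\ref{thm:grid_lower} gives $\prox_1(G_{n,n}) \ge n/5 + 1$. Since $\prox_1(G_{n,n}) \in \mathbb{Z}$ and $\lceil x + k\rceil = \lceil x\rceil + k$ for every integer $k$, this upgrades at once to $\prox_1(G_{n,n}) \ge \lceil n/5 + 1\rceil = \lceil n/5\rceil + 1$. This sharpening is only needed when $5 \nmid n$, where it gains a full unit over the raw bound $n/5+1$.

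For the upper bound, I would first observe that for each positive integer $n$ there is a unique odd $m$ and a unique integer $i$ with $0 \le i \le 9$ satisfying $n = 5m - i$, since the blocks $\{5m-9, 5m-8, \dots, 5m\}$ taken over odd $m$ tile the integers $\ge -4$; this is exactly the hypothesis of Theorem~\ref{thm:grid_upper}, which then yields $\prox_1(G_{n,n}) \le m + 3$. It remains to check that $m + 3 \le \lceil n/5\rceil + 4$, equivalently $\lceil n/5\rceil \ge m - 1$. Writing $n/5 = m - i/5$ with $0 \le i/5 \le 9/5$, the quantity $m - i/5$ lies in $(m-2, m]$, so its ceiling is at least $m - 1$; hence $\prox_1(G_{n,n}) \le m + 3 = (m-1) + 4 \le \lceil n/5\rceil + 4$, completing the argument.

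I do not anticipate a genuine obstacle: the corollary is a bookkeeping consequence of the two preceding theorems, and the only points needing care are the ceiling arithmetic and the parametrization $n = 5m - i$ (with the understanding, as in Theorem~\ref{thm:grid_lower}, that $n > 1$).
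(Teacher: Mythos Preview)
Your proposal is correct and follows essentially the same route as the paper: both derive the lower bound from Theorem~\ref{thm:grid_lower} by taking the ceiling (since $\prox_1$ is integer-valued), and both derive the upper bound from Theorem~\ref{thm:grid_upper} by checking that $m \le \lceil n/5\rceil + 1$. The paper phrases this last inequality via the identity $m = \lceil n/5\rceil + \lfloor i/5\rfloor$, while you reach it by bounding $n/5 \in (m-2,m]$, but these are the same computation.
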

\begin{proof}
Theorem \ref{thm:grid_lower} gives $\prox_1(G_{n,n})\geq \lceil\frac{n}{5} \rceil+1$. 
When we note that $m=\lceil \frac{n}{5}\rceil + \lfloor \frac{i}{5}\rfloor \leq \lceil \frac{n}{5}\rceil+1$ in Theorem \ref{thm:grid_upper}, it follows that $\prox_1(G_{n,n}) \leq \lceil \frac{n}{5}\rceil+4$. 
\end{proof}

The following result determines the one-visibility localization number of Cartesian grids up to four values, assuming that $n$ is sufficiently large. 

\begin{corollary}
If $n\geq 11$, then 
\[ 
\Big\lceil\frac{n}{5}
\Big\rceil+1 \leq \zeta_1(G_{n,n}) 
\leq 
\Big\lceil\frac{n}{5}
\Big\rceil+4.\]
\end{corollary}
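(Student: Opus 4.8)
The plan is to prove the stronger statement $\zeta_1(G_{n,n}) = \prox_1(G_{n,n})$ for $n\ge 11$; the corollary then follows at once, since $\prox_1(G_{n,n})\le\zeta_1(G_{n,n})$ (Section~\ref{spr}) and Corollary~\ref{cor:prox_grids_both} gives $\lceil n/5\rceil+1\le \prox_1(G_{n,n})\le \lceil n/5\rceil+4$. Thus the only thing to establish is the upper bound $\zeta_1(G_{n,n})\le \prox_1(G_{n,n})$. This is essentially the idea of Theorem~\ref{tpb}, but that theorem requires $\prox_1(G)\ge\Delta(G)^2$, which for grids means $\prox_1\ge 16$ and hence $n\ge 71$; the improvement comes from the observation that in $G_{n,n}$ the robber can be pinned down one round after it is first detected using only $\Delta(G_{n,n})=4$ cops rather than $\Delta(G_{n,n})^2$, and $\prox_1(G_{n,n})\ge\lceil n/5\rceil+1\ge 4$ holds precisely when $n\ge 11$ by Corollary~\ref{cor:prox_grids_both}.

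I would have the $\prox_1(G_{n,n})$ cops follow a winning one-proximity strategy inside the one-visibility Localization game. Before any probe returns a value other than $\ast$ the two games give the cops identical (all-$\ast$) information, so (using that success in the one-proximity game is independent of the robber's play, see the remarks following Lemma~\ref{lem:oneVisToSets}) in some round a probe at a vertex $v$ returns $0$ or $1$. If it returns $0$ the robber is located; otherwise the robber occupies a vertex of $N(v)$ at the end of that round, so after the robber's next move it lies in the radius-$2$ ball $N[N(v)]$ about $v$. On the next cops' move, place cops on $N(v)$ if $\deg(v)=4$ and on $N[v]$ if $\deg(v)\le 3$; this uses at most $4\le\prox_1(G_{n,n})$ cops, and the detection phase is over, so these cops are free.

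The crux is to verify that this single round identifies the robber. If some cop probes $0$ the robber is found; otherwise the robber sits on a cop-free vertex $w$ of the radius-$2$ ball, and the cops learn exactly the set $N(w)\cap N(v)$. One checks that in the grid the assignment $w\mapsto N(w)\cap N(v)$ is injective on the candidate set: for interior $v$, the four axis vertices at distance $2$ give the four singletons of $N(v)$, the (up to four) diagonal vertices at distance $2$ give the four mixed pairs consisting of one horizontal and one vertical neighbor of $v$, and $w=v$ is the only candidate adjacent to all of $N(v)$, so the fingerprints are pairwise distinct; for $v$ of degree $2$ or $3$ the candidate set is smaller and an analogous short check works, the extra cop placed on $v$ being needed only to rule out $w=v$ when $v$ is a corner. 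Hence $w$ is determined and the robber is captured, giving $\zeta_1(G_{n,n})\le \prox_1(G_{n,n})$. The only real work is this fingerprint bookkeeping on the radius-$2$ ball, in particular handling the boundary and corner positions of $v$ while keeping the pin-down round down to four cops; the hypothesis $n\ge 11$ enters solely through Corollary~\ref{cor:prox_grids_both} as the exact threshold guaranteeing that four cops are available for that round.
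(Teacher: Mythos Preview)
Your proposal is correct and follows essentially the same approach as the paper: run a winning one-proximity strategy with $\prox_1(G_{n,n})$ cops until a probe returns $0$ or $1$, then on the next round use at most four cops (available since $n\ge 11$ forces $\prox_1\ge 4$) to pin down the robber inside the radius-$2$ ball about the detecting vertex. Your case analysis is in fact slightly more careful than the paper's, which places four cops on $N(u)$ and tacitly treats only the interior case $\deg(u)=4$; your use of $N[v]$ when $\deg(v)\le 3$ correctly resolves the corner ambiguity between $v$ and its unique diagonal distance-$2$ neighbour.
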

\begin{proof}
Using $\prox_1(G_{n,n})$ cops in the one-visibility Localization game, the cops may initially follow the strategy guaranteed by Corollary~\ref{cor:prox_grids_both} until some cop probes a distance of $1$ from the robber, say the cop on vertex $u$. The robber moves and may now be on any vertex of distance $0$, $1$, or $2$ from $u$. Since $n\geq 11$, we have that $\lceil \frac{n}{5} \rceil +1 \geq 4$, and so we know that at least four cops are playing. For the cops' second move, play four cops on the vertices of $N(u)$. 

If all four cops probe a distance of $1$, then the robber is on $u$. If exactly two of these cops probe a distance of $1$, then the robber must be on the unique vertex of distance $2$ from $u$ that is
adjacent to the two vertices containing these cops. If exactly one cop probes a distance of $1$, then the robber must be on the unique vertex of distance $1$ from this cop that is not adjacent to any of the other cops. Thus, the robber's location is determined by the cops.   
\end{proof}

We finish the section with the proof of Theorem~\ref{thm:grid_upper}, but as the proof is quite technical, we motivate our technique by exploring some less efficient but more intuitive strategies. A first strategy one may think of for the cops is to clear the grid using $n$ cops to probe an entire row and march upwards, clearing the grid from bottom to top. This approach is effective but inefficient.

A natural improvement is to place a cop in every other column, alternating between the first and second row, so that two full rows of the grid are cleared using at most $(n+1)/2$ cops; see Figure~\ref{fig:interlock}.
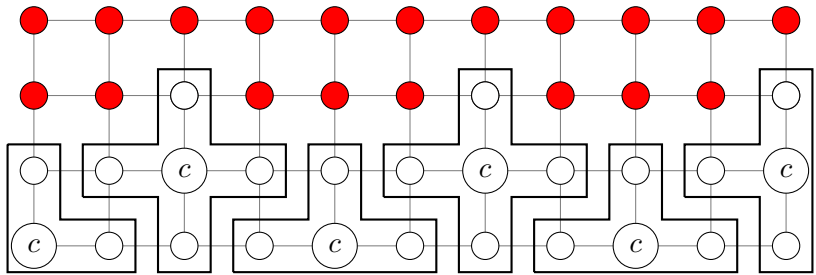
\begin{figure}[htpb!]
    \centering
    \begin{tikzpicture}
    \draw[thin,gray] (0,1) grid (10,4);

    \foreach \x in {0,...,10} {
        \foreach \y in {4,3} {
            \node[draw,circle,fill=red] at (\x,\y) {};
        }    
    }

    \node[draw,circle,fill=white] at (0,1) {$c$};
    \node[circle,draw,fill=white] at (1,1) {};
    \node[circle,draw,fill=white] at (0,2) {};
    \draw[thick] (0-0.35,2.35) -- 
    (0+0.35,2.35) -- (0+0.35,1.35) -- (0+1.35,1.35) -- 
    (0+1.35,0.65) -- (0-0.35,0.65) -- (0-0.35,2.35);
        
    \foreach \x in {4,8} {
        \node[draw,circle,fill=white] at (\x,1) {$c$};
        \node[circle,draw,fill=white] at (\x+1,1) {};
        \node[circle,draw,fill=white] at (\x,2) {};
        \node[circle,draw,fill=white] at (\x-1,1) {};
        \draw[thick] (\x-1.35,0.65) -- (\x-1.35,1.35) --
        (\x-0.35,1.35) -- (\x-0.35,2.35) -- 
        (\x+0.35,2.35) -- (\x+0.35,1.35) -- 
        (\x+1.35,1.35) -- (\x+1.35,0.65) -- 
        (\x-0.35,0.65) -- (\x-1.35,0.65);
    }

    \foreach \x in {2,6} {
        \node[draw,circle,fill=white] at (\x,2) {$c$};
        \node[circle,draw,fill=white] at (\x-1,2) {};
        \node[circle,draw,fill=white] at (\x,3) {};
        \node[circle,draw,fill=white] at (\x,1) {};
        \node[circle,draw,fill=white] at (\x+1,2) {};
        \draw[thick] (\x-1.35,1.65) -- (\x-1.35,2.35) --
        (\x-0.35,2.35) -- (\x-0.35,3.35) -- 
        (\x+0.35,3.35) -- (\x+0.35,2.35) -- 
        (\x+1.35,2.35) -- (\x+1.35,1.65) --
        (\x+0.35,1.65) -- (\x+0.35,0.65) --
        (\x-0.35,0.65) -- (\x-0.35,1.65) --
        (\x-1.35,1.65);
    }

    \node[draw,circle,fill=white] at (10,2) {$c$};
    \node[circle,draw,fill=white] at (10-1,2) {};
    \node[circle,draw,fill=white] at (10,3) {};
    \node[circle,draw,fill=white] at (10,1) {};
    \draw[thick] (10-1.35,1.65) -- (10-1.35,2.35) --
    (10-0.35,2.35) -- (10-0.35,3.35) -- (10+0.35,3.35) --
    (10+0.35,0.65) -- (10-0.35,0.65) --
    (10-0.35,1.65) -- (10-1.35,1.65);
    
    \end{tikzpicture}
    \caption{Cops placed on every other column can clear two rows.}
    \label{fig:interlock}
\end{figure}
As the cops march upwards, two rows are cleared, and only one row is reinfected each round. This strategy avoids overlapping the neighborhoods of the cops' probes, but only the ``forward'' edge of the cops' line clears infected vertices. The ``rear'' of each probe is already cleared. Matching the isoperimetric lower bound requires most cops to clear the maximum $\Delta(G)+1 = 5$ new vertices with every probe. 

To improve on this second strategy, note that it would take the robber multiple rounds to cross the cops' formation: three rounds on a column with a cop and two rounds on a column without a cop. This implies that cops only need to play on these positions every other round to prevent the robber from reinfecting the portion of the grid they've fully cleared. By shifting the set of columns where the cops' probe, it is possible for the cops to protect these rows by playing twice every five rounds, so that each column has two vertices cleared by one cop move and three vertices cleared by the other. 

We now give a high-level description of our strategy. We partition the grid into five rectangles of width approximately $n/5$ vertices. Using two sets of approximately $n/10$ cops, each rectangle is probed twice every five rounds with a cop probing every other column in the rectangle. To prevent the robber from slipping from the infected portion of one rectangle to the cleared portion of the next, the cops clear a diagonal, rather than the two rows from Figure~\ref{fig:interlock}. Although it takes several rounds to do so, we show that the cops can move these diagonals up the rectangles, closing in on the robber until they are captured.

We now turn to the proof of our main result in this section. 

\begin{proof}[Proof of Theorem~\ref{thm:grid_upper}]
Let $G_{n,m'}$ denote the $n \times m'$ grid and $G$ denote the square lattice with vertices in $\mathbb{Z}\times \mathbb{Z}$. 
It will be convenient to allow the cops to play on $G$ and to restrict the robber's position to a subgraph $G_{n,m'}$, where $m'=m$ for the first part of the proof and $m' = n$ in the second part of the proof. (Recall that $m$ is the odd integer for which $n = 5m-i$ for some $0 \le i \le 9$.) We will then argue this relaxation did not benefit the cop player. 

We break the proof into three parts which prove the following three claims, respectively: 
\begin{enumerate}
    \item When the robber is restricted to a subgraph $G_{n,m}$ within $G$, $\frac{m+3}{2}$ cops which only play on rounds $t$ with $t\equiv 0,3 \pmod{5}$ can capture the robber. 
    \item When the robber is restricted to a subgraph $G_{n,5m}$ of $G$, the robber can be captured by $m+3$ cops.
    \item At most $m+3$ cops are required to capture the robber on $G_{n,n}$. 
\end{enumerate} 

Throughout the proof, let 
\[
f_{i,j}(c) = 
\begin{cases}
        i+1+\lfloor \frac{c-j}{2} \rfloor & \text{for } c-j> 0\\ 
        i+\lceil \frac{c-j}{2} \rceil & \text{for } c-j \leq 0,
        \end{cases} 
\]
and define $F_{i,j}=\{(r,c) : 1 \leq c \leq m', f_{i,j}(c) \leq r \leq n\}$, which we call a \emph{forced region}. These forced regions describe different subsets of vertices that the cop player will contain the robber within. The strategy we describe will reduce the cardinality of the forced region over time.
It is important to note here that when we show that the robber must be in a given forced region, there may be some vertices the robber cannot occupy. In particular, it will be convenient to assume that the forced region will, in some rounds, contain several vertices $(x,y)$ with $x <1$.

The $(i,j)$ index of $F_{i,j}$ refers to a cell along the lower edge of the region, which cuts diagonally from southwest to northeast across columns $1$ through $m'$. In particular, the second index describes the \emph{column of focus} of the forced region, which we pay special attention to in the proof. The function $f_{i,j}$, given a column $c$, gives a certain key position in $F_{i,j}$ related to where the cops will play. See Figure~\ref{fig:grids_sweep_one} for a visual reference.

Let 
\begin{align*}
    S_{i,j} =& \{(f_{i,j}(c)+1,c) : c >j \text{ and } c-j \text{ is odd}\} \cup\\
    & \{(f_{i,j}(c)+1,c) : c \leq j \text{ and } c-j \text{ is even}\} .
\end{align*}

The set $S_{i,j}$ contains points spaced apart in an L-shape, similar to a knight move in chess. These vertices are chosen so that the neighborhoods of the points cover a diagonal stripe along the bottom of $F_{i,j}$ with minimal overlap. 

\smallskip

\noindent \emph{Claim 1}:  When the robber is restricted to a subgraph $G_{n,m}$ within $G$, $\frac{m+3}{2}$ cops which only play on rounds $t$ with $t\equiv 0,3 \pmod{5}$ can capture the robber. 

\smallskip

The cops' strategy consists of two moves. If the cops have restricted the robber to $F_{i,j}$ immediately before the cops' move, then the cops will play on the vertices $S_{i,j}$ that are within the columns $[0,m+1]$. By the definition of $S_{i,j}$, the cops will only be playing on every other column in $[0,j]$ and every other column in $[j+1,m+1]$, for a total of
\[ \left\lceil\frac{j}{2}\right\rceil + \left\lceil\frac{(m+1)-(j+1)+1}{2}\right\rceil \le \frac{j+1}{2} + \frac{(m+1)-(j+1)+1+1}{2} = \frac{m+3}{2} \]
cops. 

Immediately after this cop's move, the robber may only be on a vertex in $F_{i,j} \setminus N[S_{i,j}]=F_{i+2,j-1}$. 
To see this fact, we consider the cases $c>j$ and $c \leq j$. For the first case, when $c-j$ is odd, $N[(f_{i,j}(c)+1,c)]$ is a superset of $\{(f_{i,j}(c),c),(f_{i,j}(c)+1,c),(f_{i,j}(c)+2,c)\}$; if the robber is in column $c$, then it must be on a vertex $(r,c)$ with $r\geq f_{i,j}(c)+3=f_{i+2,j-1}(c)$. When $c-j$ is even, $\{(f_{i,j}(c),c),(f_{i,j}(c)+1,c)\}\subseteq N[(f_{i,j}(c)+1,c)]$, and the robber must be in a row $r\geq f_{i,j}(c)+2=f_{i+2,j-1}(c)$. The case when $c \leq j$ follows similarly to find that if the robber is in column $c$, then the robber must be on row $r\geq f_{i+2,j-1}(c)$, which defines the set $F_{i+2,j-1}$. 
We call this the \emph{natural} cop move and label this as P1 for future reference.

In the case that the robber is known to be on a vertex of $F_{i+2, 0}$, the cops instead think of the robber as being on a vertex of $F_{i+2+\frac{m+1}{2}, m}$. These two forced regions describe the same set of vertices, but as $S_{i,j}$ is determined by the index $(i,j)$, this change of index describes a different cop move. We label this replacement as P2 for future reference. 

Observe that when the robber is on a vertex in $F_{i,j}$ but the cops do not play during their next move (as in rounds $1,2,4 \pmod{5}$), then the robber moves to a vertex in $F_{i-1,j}=N[F_{i,j}]$. 
We label this as P3 for future reference. 

We prove Claim 1 recursively. 
As a base case, we can assume the robber is contained within $F_{i, m}$ for any $i \leq 1$, where we note that we will take $i$ to be negative in some cases.  
Note that for such $i$, $F_{i, m}$ contains the subset of vertices $[1,n]\times[1,m]$, so this initial assumption is always true.

For the recursive step, assume the robber is contained within some $F_{i, m}$ in round $t$ with $t \equiv 0\pmod{5}$. 
Repeating the natural move P1, the column of focus $j$ will shift from $m$ down to $1$, with rounds $0\pmod{5}$ focusing on an odd column and rounds $3\pmod{5}$ focusing on an even column. We refer to these rounds collectively as the first sweep. 
Once the column of focus is $1$, we continue to play, and the column of focus again becomes $m$. 
For this second sweep, the column of focus will shift from $m$ all the way down to $1$, but  with the parity reversed: rounds $3\pmod{5}$ focus on an odd column and rounds $0\pmod{5}$ focus on an even column. After both sweeps, the cops will have successfully moved the robber from $F_{i,m}$ to $F_{i+1,m}$.

We refer the reader to Figure~\ref{fig:grids_sweep_one}, which depicts the following cop moves.

\smallskip
{\bf First sweep:} The cops play the natural move so that the robber must be contained within $F_{i+2,m-1}$ (see P1). 
The robber moves three times, first to a vertex in $F_{i+1,m-1}$, then to a vertex in $F_{i,m-1}$, and finally to a vertex in $F_{i-1,m-1}$ (see P3). 
The cops then play the natural move in round $t+3$, so that the robber must be contained within $F_{i+1,m-2}$ (see P1). 
Then the robber moves twice, first to a vertex in $F_{i,m-2}$, then to a vertex in $F_{i-1,m-2}$ (see P3). 
This shows that every five moves, the indices of the forced region decrease by one in the first coordinate and two in the second coordinate. This process repeats $(m-1)/2$ times until the robber is known to reside on $F_{i-\frac{m-1}{2},1}$ immediately before the cops' $(t+5\frac{m-1}{2})$th move, where we note that  $t+5\frac{m-1}{2} \equiv 0 \pmod{5}$. 
The cops again play the natural move, and so the robber is known to reside in $F_{i-\frac{m-1}{2}+2,0}$, immediately after the cops' move, which is then replaced with $F_{i+3,m}$ by P2. 
The robber takes three moves and is on a vertex in $F_{i,m}$.  

\begin{figure}
    \centering
    \includegraphics[width=10cm]{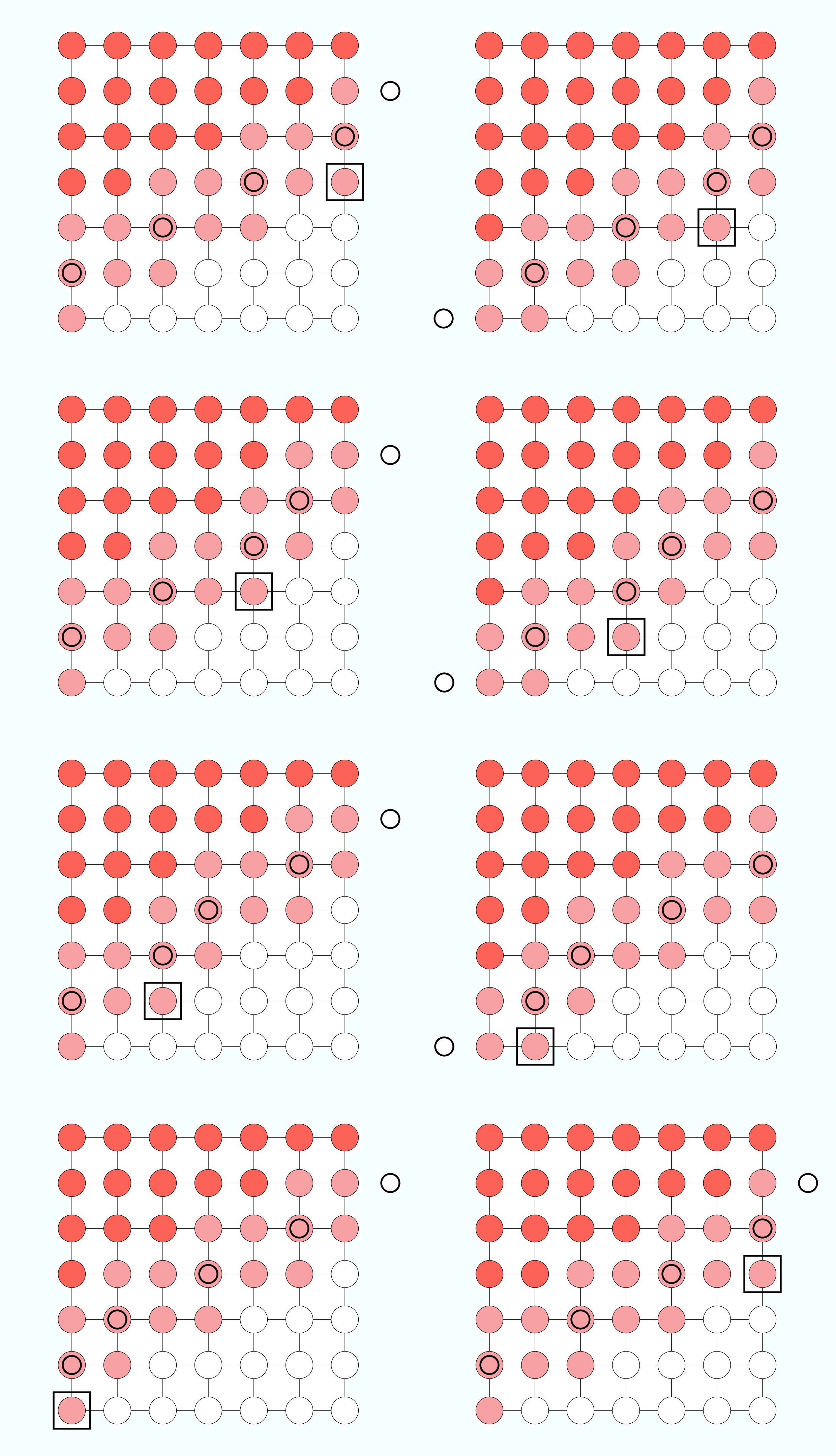}
    \caption{The first sweep of claim 1 where $m=7$. Red and pink dots indicate the forced region before the cops' move, and the square indicates the $i,j$ such that this forced region is $F_{i,j}$. 
    Red dots indicate the forced region after the cops' move. Black circles indicate the locations of the cops their move. From left to right, top to bottom, the images indicate play just before and after the cops' 1st, 4th, 6th, 9th, 11th, 14th, 16th, and 19th move.}
    \label{fig:grids_sweep_one}
\end{figure}

\smallskip
{\bf Second sweep:} The cops again play the natural move, and so the robber is known to reside in $F_{i+2,m-1}$. 
The robber takes two moves and is on a vertex in $F_{i,m-1}$. Once again, every five moves the first index of the forced region is decreased by one and the second index by two.
Play continues in this fashion (the cops playing the natural move on round $0,3\pmod{5}$) for $(m-1)/2$ rounds, until the robber is known to reside in $F_{i-\frac{m-1}{2},1}$ immediately before the cops' move in the $(t+5(m-1))$th round, where $t+5(m-1) \equiv 0 \pmod{5}$. 
The cops play the natural move, so the robber is known to reside in $F_{i-\frac{m-1}{2}+2,0}$ immediately after the cops' move, which is then replaced with $F_{i+3,m}$ by P2. 
The robber takes two moves and is on a vertex in $F_{i+1,m}$ immediately before the cops' $(t+5m)$th move.

We note that we started with the robber being on a vertex of $F_{i,m}$ in some round $0 \pmod{5}$, and now have that $F_{i+1,m}$ in some round $0 \pmod{5}$. This completes the recursive step, and so we can conclude that after sufficiently many rounds, as $F_{n+\frac{3m}{2},m}~=~\varnothing$,
the robber is captured, and the proof of Claim~1 follows.

For the next claim, it will be useful to note that if this process is initialized with $F_{i,m}$, then immediately before the cops' move in round $t=5m\alpha+1$, the robber must be in the forced region $F_{i+\alpha,m}$. 

\smallskip

\noindent \emph{Claim 2}:   When the robber is restricted to a subgraph $G_{n,5m}$ of $G$, the robber can be captured by $m+3$ cops. 

\smallskip

We split the subgraph $G_{n,5m}$ into five subgraphs $A_1, A_2,A_3,A_4,$ and $A_5$, where $A_j$ is on the vertices of $G_{n,5m}$ in columns $[(j-1)m+1, jm]$. 
We use a set of $\frac{m+3}{2}$ cops on $A_j$ on rounds $j,j+3 \pmod{3}$. 
This requires two sets of $\frac{m+3}{2}$ cops, and so $m+3$ cops are used in total. 
The technique described in Claim $1$ is applied to each $A_i$, shifting the rounds on which the cops play appropriately, and with the additional condition that the process in Claim 1 is started in $A_j$ in round $j$ with forced region $F_{i,m}$ where $i=-2m + (j-1)\frac{m-1}{2}$.

To illustrate how the process in Claim~1 is extended to $G_{n,5m}$, we describe the first six cops moves of the $m+3$ cops on the infinite square grid $G$, focused on the subgraph $G_{n,5m}$. 

\smallskip

\noindent \emph{Cop move 1}: The first set of $\frac{m+3}{2}$ cops play on $S_{-2m,m}$ on the columns $[0,m+1]$ (this is the first move for the cops in $A_1$).

\smallskip

\noindent \emph{Cop move 2}: The first set of $\frac{m+3}{2}$ cops play on $S_{-2m+\frac{m-1}{2},m}$ on the columns $[m,2m+1]$ (this is the first move for the cops in $A_2$). 

\smallskip

\noindent \emph{Cop move 3}: The first set of $\frac{m+3}{2}$ cops play on $S_{-2m+2\frac{m-1}{2},m}$ on the columns $[2m,3m+1]$ (this is the first move for the cops in $A_3$). 

\smallskip

\noindent \emph{Cop move 4}: The first set of $\frac{m+3}{2}$ cops play on $S_{-2m+3\frac{m-1}{2}, m}$ on the columns $[3m,4m+1]$ (this is the first move for cops in $A_4$) and the second set of $\frac{m+3}{2}$ cops play on $S_{-2m-1, m-1}$ on the columns $[0,m+1]$ (this is the fourth round in $A_1$).

\smallskip

\noindent \emph{Cop move 5}: The first set of $\frac{m+3}{2}$ cops play on $S_{-2m+4\frac{m-1}{2}, m}$ on the columns $[4m,5m+1]$ (this is the first move for cops in $A_5$) and the second set of $\frac{m+3}{2}$ cops play on $S_{-2m-1+\frac{m-1}{2},m-1}$ on the columns $[m,2m+1]$ (this is the fourth round in $A_2$).

\smallskip

\noindent \emph{Cop move 6}: The first set of $\frac{m+3}{2}$ cops play on $S_{-2m-1,m-2}$ on the columns $[0,m+1]$ (this is the sixth round in $A_1$) and the second set of $\frac{m+3}{2}$ cops play on $S_{-2m-1+2\frac{m-1}{2},m-1}$ on the columns $[2m,3m+1]$ (this is the fourth round in $A_3$).

\smallskip

If the robber stays within a subgraph $A_j$, then by Claim~1 they will eventually be captured.
Suppose the robber moves from one subgraph to another, say from $A_{a}$ to $A_b$. 
The robber must have been on a vertex of the cops' current forced region in $A_{a}$ in round $t$. 
If the robber moved to the cops' current forced region in $A_b$, then the robber has not made progress as they may as well have started in $A_b$ and stayed there until the current move. 
Therefore, we can assume that the robber moves to a vertex of $A_b$ outside of the cops' forced region. 

Assume without loss of generality that $a,b\in\{1,2\}$. 
We analyze the moves that occur on the border of $A_1$ and $A_2$, which affect where the robber can be on either column $m$ or $m+1$. 
Before we begin, we analyze each of the cops' moves of $A_1$ to find which cops played on either the left-most columns $0$, $1$, and $2$, or the right-most columns $m-1$, $m$, and $m+1$. 
This will require a deeper analysis of the moves in Claim~1. 
We note the following properties. 

\smallskip
{\bf Property 1}: The first sweep of Claim~1 utilized $5\frac{m-1}{2}+3$ rounds and the second sweep of Claim~1 utilized $5\frac{m-1}{2}+2$ rounds. Together, this is $5\frac{m-1}{2}+3+5\frac{m-1}{2}+2 = 5m$ rounds needed to perform both sweeps. Therefore, if the cops were on $S_{i,m}$ in round $t$, then the cops are on $S_{i+1,m}$ in round $t+5m$. Since the cops play on $S_{-2m,m}$ in the round with $t=1$, we conclude that the cops play on $S_{-2m+\alpha,m}$ during round $t=1+(5m)\alpha$. 

\smallskip
{\bf Property 2}: If the cops played on $S_{i,j}$ in round $t$, then the cops play on $S_{i-1,j-2}$ in round $t+5$, unless $j\in \{1,2\}$, in which case the cops play on $S_{i+\frac{m-1}{2},j+m-2}$.
Since the cops play on $S_{-2m+\alpha,m}$ in round $t=1+(5m)\alpha$, we conclude that the cops play on $S_{-2m+\alpha-\beta,m-2\beta}$ in round $t=1+(5m)\alpha+5\beta$ when $0 \leq \beta \leq \frac{m-1}{2}$, and the cops play on $S_{-2m+\alpha-\beta+\frac{m-1}{2},m-2\beta+(m-2)}$ in round $t=1+(5m)\alpha+5\beta$ when $\frac{m+1}{2} \leq \beta \leq m-1$. 

\smallskip
{\bf Property 3}: If the cops play on $S_{i,j}$ in round $t$ where $t\equiv 1 \pmod{5}$, then in the round $t+3$ the cops play on $S_{i-1,j-1}$ if $j \neq 1$, and on $S_{i+\frac{m-1}{2},j+(m-1)}$ if $j=1$. 

We next consider the situation where the cops play near the left and right edges. For each $t\equiv 1 \pmod{5}$, we describe which of these cops in $A_1$ played on a column in $\{0,1,2, m-1,m,m+1\}$. 
\begin{enumerate}
\item If we are playing in round $t=1+(5m)\alpha+5\beta$ where $\beta=0$, then the cops in these columns were played on vertices $\{(-2m+\alpha+1,m),(-2m+\alpha+2,m+1), (-2m+\alpha+1-\frac{m-1}{2},1)\}$.
\item If we are playing in round $t=1+(5m)\alpha+5\beta$ where $1 \leq \beta \leq \frac{m-1}{2}$, then the cops in these columns were played on vertices $\{(-2m+\alpha+1,m-1),(-2m+\alpha+2,m+1), (-2m+\alpha+1-\frac{m-1}{2},1)\}$.
\item If we are playing in round $t=1+(5m)\alpha+5\beta$ where $\frac{m+1}{2} \leq \beta \leq m-1$, then the cops in these columns were played on vertices $\{(-2m+\alpha+2,m),(-2m+\alpha+1-\frac{m-1}{2},0),(-2m+\alpha+2-\frac{m-1}{2},2)
\}$.
\end{enumerate}

For each $t\equiv 4 \pmod{5}$, we describe which of these cops in $A_1$ played on a column in $\{0,1,2, m-1,m,m+1\}$.  
\begin{enumerate}
\item If we are playing in round $t=4+(5m)\alpha+5\beta$ where $0 \leq \beta \leq \frac{m-1}{2}$, then the cops in these columns were played on vertices $\{(-2m+\alpha+1,m), (-2m+\alpha-\frac{m-1}{2},0), (-2m+\alpha+1-\frac{m-1}{2},2)\}$.  
\item If we are playing in round $t=4+(5m)\alpha+5\beta$ where $\beta = \frac{m+1}{2}$, then the cops in these columns were played on vertices $\{(-2m+\alpha+1,m), (-2m+\alpha+2,m+1),
(-2m+\alpha+1-\frac{m-1}{2},1)
\}$.
\item If we are playing in round $t=4+(5m)\alpha+5\beta$ where $\frac{m+1}{2} \leq \beta \leq m-1$, then the cops in these columns were played on vertices $\{(-2m-1+\alpha+ 1,m-1), (-2m+\alpha+2,m+1),
(-2m+\alpha+1-\frac{m-1}{2},1)
\}$. 
\end{enumerate}

\smallskip
{\bf Property 4}: A cop on $A_1$ plays on the vertex $(i,j)$ during round $t$ if and only if a cop on $A_2$ plays on the vertex $(i+\frac{m-1}{2},j+m)$ in round $t+1$.
As a consequence, for each of the vertices $(i,j)$ with $j \in \{0,1,2\}$ 
that were visited by a cop in $A_1$ in round $t$ as described above, the corresponding vertex $(i',j') = (i+\frac{m-1}{2},j+m)$ in $A_2$ was visited in round $t+1$, where  $j' \in \{m,m+1,m+2\}$. 
Therefore, for every round, we can now derive which cops probed a vertex in column $\{m-1,m,m+1,m+2\}$.

This is relevant as only the cops playing in columns $\{m-1,m,m+1,m+2\}$ will impact the robber's location on the border of $A_1$ and $A_2$. To simplify, take $\alpha=2m$. A similar argument follows for all other $\alpha$. 
In Table~\ref{tab:cop_placements_part2}, we describe exactly which vertices are probed by the cops on columns $\{m-1,m,m+1,m+2\}$ in rounds $i + 5\beta + 5m(2m)$, where $0 \leq \beta \leq m-1$ and $1 \leq i \leq 5$.

\begin{table}[h]
    \centering
\begin{tabular}{ c || c c c c c | c c c c c }
\hline
 i& $\beta=0$& $1 \leq \beta \leq \frac{m-1}{2}$& $\beta=\frac{m+1}{2}$ & $\frac{m+1}{2} \leq \beta \leq m-1$  \\
\hline
$1$ & 
$(1,m)$ $(2,m+1)$
&$(1,m-1)$ $(2,m+1)$
&$(2,m)$
&$(2,m)$\\
$2$ & $(1,m+1)$
&$(1,m+1)$
&$(1,m), (2,m+2)$ 
&$(1,m)$ $(2,m+2)$\\
$3$&&&&\\
$4$ & $(1,m)$&$(1,m)$&$(1,m)$ $(2,m+1)$&$(1,m-1)$ $(2,m+1)$\\ 
$5$ & $(0,m)$ $(1,m+2)$&$(0,m)$ $(1,m+2)$&$(1,m+1)$&$(1,m+1)$ 
\end{tabular}
    \caption{The vertices in columns $\{m-1,m,m+1,m+2\}$ where a cop plays in round $t=i+5\beta+5m\alpha$, where $\alpha=2m$.}
    \label{tab:cop_placements_part2}
\end{table}

We may also analyze Claim~1 to find that if the robber is on a vertex $(i,m)$ in column $m$ that is in the forced region of $A_1$ immediately after the cops move in round $t=1+5\beta + (5m) \alpha$, then $i \geq \alpha-2m+3$ if $0 \leq \beta \leq \frac{m-1}{2}$, and $i \geq \alpha-2m+4$ if $\frac{m+1}{2} \leq \beta \leq m-1$. Similarly, after the cops move in round $t=4+5\beta + (5m) \alpha$, then $i \geq \alpha-2m+3$ for $0 \leq \beta \leq m-1$. 

If the robber is on a vertex $(i,m+1)$ in column $m+1$ that is in the forced region of $A_2$ immediately after the cops move in round $t=2+5\beta + (5m) \alpha$, then $i \geq \alpha-2m+3$. 
Similarly, after the cops move in round $t=5+5\beta + (5m) \alpha$, then $i \geq \alpha-2m+2$ if $0 \leq \beta \leq \frac{m-1}{2}$, and $i \geq \alpha-2m+3$ if $\frac{m+1}{2} \leq \beta \leq m-1$.
 
Let $x_1^t$ denote the smallest value of $x$ such that a cop may be on $(x,m)$ in round $t$ in the forced region of $A_1$, and let $x_2^t$ denote the smallest value of $x$ such that a cop may be on $(x,m+1)$ in round $t$ in the forced region of $A_2$. 
The robber may then move from the forced region of $A_1$ onto a vertex not in the forced region of $A_2$ only when $x_1^t \leq x_2^t+2$. Similarly, the robber may move from the forced region of $A_2$ onto a vertex not in the forced region of $A_1$ only when $x_2^t \leq x_1^t+2$. We note that by the above analysis, this only occurs when $t \equiv 1,4 \pmod{5}$. 
In each of these rounds and for every possible move of the robber from a vertex of a forced region onto a vertex not in a forced region, there is a cop that prevents it by either being adjacent to the robber before or after their move. The complete list of such events is presented in Table~\ref{tab:cops_on_border_patrol} for the case $\alpha=2m$. 
The proof of Claim~2 follows.

\begin{table}[h]
    \centering
\begin{tabular}{ c c || c c c c c | c c c c c }
\hline
&$t\equiv$ 
& robber at $t$
& robber at $t+1$
& capturing cop\\
\hline
$0 \leq \beta\leq \frac{m-1}{2}$ &1&(1,$m$+1)&(1,$m$) & $(2,m+1)$ on round $t$\\
 &4&(1,$m+1$)&(1,$m$) & $(1,m)$ on round $t$\\
\hline
$\beta =\frac{m+1}{2}$ &1&(1,$m+1$)&(1,$m$) & $(1,m)$ on round $t+1$\\
 &1&(2,$m$+1)&(2,$m$) & $(1,m)$ on round $t+1$\\
 &4&(1,$m+1$)&(1,$m$) & $(1,m)$ on round $t$\\
\hline
$\frac{m+3}{2} \leq \beta\leq m-1$ &1&(2,$m+1$)&(2,$m$) & $(2,m)$ on round $t$\\
 &4&(2,$m+1$)&(1,$m+1$) & $(2,m+1)$ on round $t$\\
\hline
\end{tabular}
    \caption{
    In round $t=i+5\beta+(5m)\alpha$ with $\alpha=2m$, each possible robber move from the forced region of one $A_j$ to the unforced region of the other $A_{j'}$ is represented as a row, with the corresponding cop that captures the robber if it performs this move.}\label{tab:cops_on_border_patrol}
\end{table}

\smallskip

\noindent \emph{Claim 3}:  At most $m+3$ cops are required to capture the robber on $G_{n,n}$.

\smallskip

We now show that at most $m+3$ cops are required to capture the robber on $G_{n,n},$ which proves Claim~3 and will complete the proof of the theorem. Recall that $n=5m-i$ for some 
$0\le i \le 9$. 
To capture the robber on $G_{n,n}$, the cops' will observe and modify the strategy to capture the robber on the subgraph $G_{n,5m}$ of $G$ given in Claim~2. 
That is, suppose that the $m+3$ cops play on vertices $S_t$ in round $t$ in $G$ where the robber is restricted to $G_{n,5m}$. 
We further restrict the robber so that it can only be on $G_{n,n}\subseteq G_{n,5m}$. 
A simple modification to the cop moves $S_t$ also ensures that the cops only play on the subset $G_{n,n}$ of $G$. However, this game is identical to just playing on the graph $G_{n,n}$, and so is a winning strategy for $m+3$ cops to capture the robber on $G_{n,n}$. 

We note that each cop outside of $[0,n+1]\times[0,n+1]$ will not affect the robber, since the robber is contained within the vertices $[1,n]\times[1,n]$ in all rounds. Delete all vertices in $S_t$ that are not within $[0,n+1]\times[0,n+1]$. This has no impact on capturing the robber. 

Suppose $(0,x)\in S_t$. This cop clears only the vertex $(1,x)$ in round $t$, and so it is a strictly better move for the cop to play on $(1,x)$. We therefore, replace $(0,x)\in S_t$ with $(1,x)\in S_t$. Similarly, we replace $(n+1,x)\in S_t$ with $(n,x)\in S_t$, replace $(x,0)\in S_t$ with $(x,1)\in S_t$, and replace $(x, n+1)\in S_t$ with $(x,n)\in S_t$. 

The resulting cop moves are, therefore, strictly better at capturing the robber on $G_{n,n}$, but also have the robber contained within $G_{n,n}$. This completes the proof. 
\end{proof}

\section{Conclusion and future directions}

We introduced the one-visibility localization number and proved asymptotically tight bounds on Cartesian grids and bounds on $k$-ary trees. We gave bounds for trees in terms of their order and depth. Determining a tree's exact one-visibility localization number based on its structural features remains an open problem. 

The one-visibility localization number may be investigated in various graph families where the localization number has been studied, such as Kneser graphs, Latin square graphs, or the incidence graphs of projective planes and combinatorial designs. Our approach using isoperimetric inequalities should apply to the families of hypercubes, higher dimensional Cartesian grids, and strong grids.

Another natural direction would be to consider the $k$-limited visibility Localization game for $k > 1,$ with corresponding optimization parameter $\zeta_k$. It would be interesting to find graphs $G$ such $\zeta_i(G) \ne \zeta_j(G)$ for all distinct values of $i$ and $j$ that are at most the radius of $G.$

\section{Acknowledgements}
The authors were supported by NSERC.

\end{document}